\newcommand\ackname{Acknowledgements}
  \newenvironment{acknowledgements}{%
      \titlepage
      \null\vfil
      \@beginparpenalty\@lowpenalty
      \begin{center}%
        \bfseries \ackname
        \@endparpenalty\@M
      \end{center}}%
     {\par\vfil\null\endtitlepage}
\newtheorem{thm}{Theorem}[section]
\newtheorem{cor}[thm]{Corollary}
\newtheorem{prop}[thm]{Proposition}
\newtheorem{lemma}[thm]{Lemma}
\theoremstyle{definition}
\newtheorem{deflem}[thm]{Definition-Lemma}
\newtheorem{definition}[thm]{Definition}
\newtheorem{exmp}[thm]{Example}
\newtheorem{exmps}[thm]{Examples}
\newtheorem{corollary}[thm]{Corollary}
\theoremstyle{remark}
\newtheorem{remark}[thm]{Remark}
\DeclareMathOperator{\Spec}{Spec}
\DeclareMathOperator{\Specm}{Specm}
\DeclareMathOperator{\sep}{sep}
\DeclareMathOperator{\Hom}{Hom}
\DeclareMathOperator{\Der}{Der}
\newcommand{\nn}[1]{ \llbracket 1,  #1 \rrbracket}
\newcommand{\on}[1]{ \llbracket 0,  #1 \rrbracket}
\newcommand{\xn}{\underline{x}_n}
\DeclareOldFontCommand{\bf}{\normalfont\bfseries}{\mathbf}
\theoremstyle{theorem}
\newcommand{\introthmname}{}
\newtheorem{introthminn}{\introthmname}
\newenvironment{introthm}[1]
  {\renewcommand{\introthmname}{#1}\begin{introthminn}}
  {\end{introthminn}}
\newtheorem{introthminn1}{\introthmname}
\newenvironment{introthm1}[1]
  {\renewcommand{\introthmname}{#1}\begin{introthminn1}}
  {\end{introthminn1}}
\newtheorem*{thm*}{Theorem}
\let\c@equation\c@thm
\numberwithin{equation}{section}
\title{A characterization of ramification groups via Taylor morphism}
\author{Sophie Marques and Luigi Pagano}
\begin{document}

\begin{abstract}
In this paper, we present a functorial method to define ramification groups, identifying them as inertia groups of an induced action on composite jet algebras. This framework lays the foundation for defining higher ramification groups for actions involving group schemes. To achieve this, we introduce Taylor maps within the category of commutative unitary rings at prime ideals of an \(R\)-algebra and compute their kernels for algebras of finite type over a field with separably generated residue fields. \\


\noindent \textbf{Keywords. } Jet algebra, Ramification, Action, Taylor map, Ramification groups, Derivation, Formal smoothness, Separably generated. \\

\noindent \textbf{2020 Math. Subject Class.} 14A05, 13A50, 13B25
\\
\end{abstract}
\maketitle \footnote{The first author is an associate of NITheCS (National Institute for Theoretical and Computational Sciences), South Africa.} 

\tableofcontents
\section*{Introduction}\label{d1} 
In algebra, the exploration of field and ring extensions is central. This pursuit finds its roots in various mathematical inquiries, such as the study of Diophantine equations over integers, exemplified by the famous equation \( x^n + y^n = z^n \) for \( n \in \mathbb{N} \), which led to Fermat's last theorem. When considering the case \( n = 2 \), a comprehensive solution emerges by delving into Gaussian integers. Here, the arithmetic properties of this extension closely resemble those of the integers \( \mathbb{Z} \). However, the scenario becomes more intricate for broader cyclotomic extensions \( \mathbb{Q}(\zeta) \), where the ring of integers remains \( \mathbb{Z}[\zeta] \), yet the uniqueness of prime factorization may be forfeited. Various approaches have been proposed, including focusing on prime ideals instead of elements. Crucially, the preservation of essential arithmetic properties hinges upon understanding ramification data. In algebraic number theory, while examining the decomposition of an ideal generated by a prime number in the ring of integers of a Galois number field, it's been established that the common multiplicity in which each prime ideal in the decomposition appears precisely to be the cardinality of the inertia group at that prime, also known as the ramification group of index zero. This theorem extends to both Galois extensions of number fields and Galois function field extensions. A prime ideal which has a ramification index greater than one is said to be ramified. 

The importance of the discriminant of a polynomial or, more generally, of a Galois number field as a crucial invariant of these extensions is undeniable, the discriminant also tells us which primes are ramified. With a finer analysis of the trace map, we can define the different ideal, which is divisible precisely by the ramified primes in ring of integer of the number field in question. The differential exponent of a local field extension such that the residual extension is separable can be computed from the orders of the higher ramification groups for Galois extensions (see \cite[p. 115]{Weiss}). By forming a group filtration, the higher ramification groups also provide valuable insights into the structure of the Galois group under examination.

Historically, the meticulous proofs of Kronecker-Weber by Hilbert, predating class field theory, heavily relied on the utilization of higher ramification groups (see \cite{Conrad2}). In local class field theory, the Hasse-Arf theorem asserts that the increments of the filtration of the higher ramification groups consist entirely of rational integers (see \cite{Hasse}, \cite{Hasse2}, \cite{Cahit}). The definition of the Artin conductor and the Swan character also entails expressions involving higher ramification (see \cite{Serre}).

In \cite{Barnard}, ramification was extended to actions of groups on rings. Ramification is closely intertwined with the preservation, or lack thereof, of arithmetic properties such as Cohen-Macaulayness in the ring of invariants under a given action (see \cite{Lorenz}, \cite{Ben}).

Ramification has also been extended to actions involving group schemes on schemes, although only inertia groups were formally defined. Indeed, in \cite[III, \S, $n^\circ$2]{Gabriel}, a functorial approach to defining inertia groups was introduced. This method extends the definition of inertia beyond just the residue field to include $T$-points, allowing for a more nuanced understanding of inertia groups, which becomes particularly crucial in higher dimensions. Applications to slice theorem, Euler characteristics, and the class-invariant homomorphism can be found in \cite{Boas} and \cite{tame}. Ramification has even been extended and applied to stacks, as discussed in \cite{AOV}, collectively demonstrating the central role of ramification in mathematics. In this article, through jet algebras, we explore an alternative approach to defining higher ramification, with the aspiration of establishing a functorial method for defining them for $T$-points and actions of group schemes on schemes, thereby granting us access to a broader theoretical framework for future investigations.

More precisely, we study the ramification groups induced by a group action on a finite-type algebra \(B\) over a field \(k\). We do so by the means of the jet functor, which allows us to construct a new object that, broadly speaking, manifests properties concerning the behavior of the elements of $B$ in the infinitesimal neighbourhood of the points of its spectrum.
Via this feature, we identify the ramification groups of the original action with ramification groups of lower order of the induced action via this construction.
In Definition \ref{Taylor}, for any \(R\)-algebra \(B\), we define Taylor morphisms $T_{\mathfrak{p}, s}$ of order \(s\) at a prime ideal \(\mathfrak{p}\) of \(B\). 
This morphism is analogous to the Taylor map introduced in a calculus course. This algebraic definition allows us to extend this concept to an algebraic geometry setting, broadening its applicability.  Our first main result computes the kernel of these Taylor morphisms. Specifically,

\begin{introthm}{Theorem}
Let \(k\) be a field, \(s\in \mathbb{N}\setminus \{0 \}\), \(B\) be a \(k\)-algebra of finite type, and \(\mathfrak{p}\) be a prime ideal of \(B\). We assume that its residue field at $\mathfrak{p}$, $k(\mathfrak{p})$, is separably generated over \(k\), and $\operatorname{char}(k)$ is either $0$ or greater than $s$. Then $\operatorname{ker}(T_{\mathfrak{p}, s})=\mathfrak{p}_\mathfrak{p}^{s+1}$.
\end{introthm}

Our second main theorem, a consequence of Theorem 1, proves that the ramification groups for an action of a group on an \(R\)-algebra can be defined as the inertia group of an induced action on certain jet algebras.

\begin{introthm}{Theorem}
Let \(G\) be a group acting on \(B\) a \(k\)-algebra of finite type, \(\mathfrak{p}\) be a prime ideal of \(B\), and \(s\in \mathbb{N}\setminus \{0 \}\). We suppose that its residue field at $\mathfrak{p}$, $k(\mathfrak{p})$, is separably generated over \(k\), and $\operatorname{char}(k)$ is either $0$ or greater than $s$. We have that \(\mathfrak{p}\mathcal{L}_1(B)\) is a prime ideal of \(\mathcal{L}_1(B)\), and the ramification group of order \(s\) at \(\mathfrak{p}\) is given by 
\[G_{s}(\mathfrak{p}) = G_{s-1} (\mathfrak{p}\mathcal{L}_1(B)) =\cdots= G_{0} (\mathfrak{p}\mathcal{L}_1^{s}(B))\]
where 
\begin{itemize}
\item \( G_{s}(\mathfrak{p})\) is the higher-ramification group of order $s$ as defined in Definition \ref{ramification}. 
\item \(\mathcal{L}_1(B)\) is the first jet algebra of \(B\) as defined in Definition \ref{defft};
\item \(\mathcal{L}_1^{t}\) denotes \(\mathcal{L}_1\) composed with itself \(t\) times where $t \in \mathbb{N}\setminus \{0\}$. \end{itemize}
\end{introthm}

As an example, we consider a field \( k \) of characteristic \( 3 \), and the automorphism \( \sigma \) of order \( 3 \) on \( k[x,y] \) defined by \( \sigma(x) = x + y^2 \) and \( \sigma(y) = y \). Here, $G= \langle \sigma \rangle$ and $B = k[x,y]$. At the maximal ideal \( (x,y) \) of \( k[x,y] \), we observe that the inertia group and first ramification groups of this action are both equal to the whole group generated by \( \sigma \), while the higher ramification groups are trivial, indicating a wildly ramified action. 

The jet algebra of the polynomial ring in two variables \( k[x,y] \) can be identified to the polynomial ring in four variables \( k[x_0, y_0, x_1, y_1] \). Moreover, \( \sigma \) induces an order \( 3 \) automorphism \( \mathcal{L}_1(\sigma) \) such that \( \mathcal{L}_1(\sigma)(x_0) = x_0 + y_0^2 \), \( \mathcal{L}_1(\sigma)(x_1) = x_1 + 2 y_0 y_1 \), and \( \mathcal{L}_1(\sigma)(y_i) = y_i \) for \( i \in \{1,2\} \). 

At the prime ideal \( (x_0, y_0) \) of \( k[x_0, y_0, x_1, y_1] \), the inertia group of this induced action remains the whole group generated by \( \sigma \) while the higher ramification group are all trivial, indicating a tamely ramified extension. In particular, the \( n^{th} \) ramification group of \( \sigma \)'s action corresponds to the \( (n-1)^{th} \) ramification group of the induced action \( \mathcal{L}_1(\sigma) \), for all \( n \geq 1 \).

The jet algebra of the polynomial ring in four variable \( k[x_0, y_0, x_1, y_1] \) identifies to the polynomial ring in eight variables \( k[x_{0,0}, y_{0,0}, x_{1,0}, y_{1,0}, x_{0,1}, y_{0,1}, x_{1,1}, y_{1,1}] \), where \( \sigma \) induces an order \( 3 \) automorphism \( \mathcal{L}_1^2(\sigma) \) such that \( \mathcal{L}_1^2(\sigma)(x_{0,0}) = x_{0,0} + y_{0,0}^2 \), \( \mathcal{L}_1^2(\sigma)(x_{1,1}) = x_{1,1} + 2(y_{0,1}y_{1,0} + y_{0,0}y_{1,1}) \), \( \mathcal{L}_1^2(\sigma)(x_{i,j}) = x_{i,j} + 2 y_{0,0} y_{i,j} \) for \( (i,j) \in \{(1,0), (0,1)\} \), and \( \mathcal{L}_1^2(\sigma)(y_{i,j}) = y_{i,j} \) for \( (i,j) \in \{(0,0), (1,0), (0,1), (1,1)\} \). 

At the prime ideal \( (x_{0,0}, y_{0,0}) \) of \( k[x_{0,0}, y_{0,0}, x_{1,0}, y_{1,0}, x_{0,1}, y_{0,1}, x_{1,1}, y_{1,1}] \), this extended action is now unramified, and the \( n^{th} \) ramification group of \( \sigma \)'s action corresponds to the \( (n-2)^{th} \) ramification group of \( \mathcal{L}_1^2(\sigma) \), for all \( n \geq 2 \). 

This example illustrates our approach as a "desingularization" of a wildly ramified action. As we progress to more complex groups and actions, each step in the jet algebra construction enriches our understanding by providing access to a finer level of ramification at each prime ideal. After a finite number of steps, we can compute any ramification group as the inertia group of a composite of jet algebras, as demonstrated earlier. Given the complexity of higher ramification, the assumptions of our theorem allow us to access and explore numerous cases. Additional examples can be found in Example \ref{ex1}, while counterexamples, where the assumptions of the theorem are not met, can be found in Example \ref{ex0}.

Our initial section lays the foundation by presenting background for our subsequent discussions. We compute the space of derivations for different field extensions, as outlined in Corollary \ref{tr} and Lemma \ref{ins}. Additionally, considering a finite separable field extension of a rational field in several variables and its integral closure, we establish in Proposition \ref{seplemma} that the separable maximal spectrum of this integral closure is a dense subset of its spectrum. In Theorem \ref{fs}, we characterize the formal smoothness of the residue field of a prime $\mathfrak{p}$ in a polynomial ring in terms of the density of the separable maximal spectrum of $\mathfrak{p}$ in the set of prime ideals containing $\mathfrak{p}$. From this point on, we assume the formal smoothness of the residue field of the prime considered, which is equivalent to assuming that this residue field is separably generated. Under this assumption and extra assumptions on the characteristic, we characterize when an element belongs to a certain power of a prime in terms of lower power conditions involving a specific set of derivatives, as shown in Lemma \ref{ss-1}. Additionally, we prove that if a prime ideal is the intersection of certain ideals, then locally the power of this prime ideal is the intersection of the powers of those ideals, as discussed in Corollary \ref{int}. As a consequence of these findings, we prove that the ramification group of a certain order at a given prime ideal is the intersection of the ramification groups of the same order at the maximal ideals containing it, a result presented in Corollary \ref{specmsep}.

The second section aims to establish fundamental concepts and notations surrounding jet algebras. Commencing with a recap of the definition of the jet functor (see Definition \ref{jet}), we proceed to revisit the construction of the jet algebras within the finite type case, defining two natural transformations and different constructs that will remain central to our paper (see Definition-Lemma \ref{defft}, Definition-Lemma \ref{defftt}, and Definition-Lemma \ref{dtt}). Subsequently, we elucidate the general construction of jet algebra and introduce the jet algebra functor (see Definition \ref{lm}). This section concludes by establishing an isomorphism between the jet algebra's derivation space and the symmetric algebra of the module of relative differential forms (see Definition-Lemma \ref{sym}).

In the third section, we define and explore the properties of (localized) thickened fibers (see Definition-Lemma \ref{thickfib}). Lemma \ref{Nsp} elucidates our rationale for considering these fibers to describe ramification groups in terms of jet algebras. Finally, we provide a characterization of the fiber of the first jet algebra at the residue field of a prime ideal (see Lemma \ref{irreduc}).

In the final section, we define morphisms of $R$-algebras that behave analogously to the well-known Taylor maps when acting on algebras of finite type (see Definition \ref{Taylor}) as mentioned above. Subsequently, we prove one of our main results: Theorem 1 and Theorem 2 enable us to express ramification groups of an $R$-algebra action in terms of the inertia groups of the induced action on a composition of jet algebras. Such a characterization holds significant value for the definition of ramification groups, especially within the context of algebraic geometry, when considering, for instance, group scheme actions.

\newpage
\section{Index of notation}
In this paper, $R$ denotes a commutative unitary ring and $n,m$ is a natural number.

\begin{center}
  \begin{tabular}{l p{11cm}}
    $\mathbb{N}$ & The natural numbers including $0$; \\
    $\llbracket m,\! n \rrbracket$ & $\{ m, m+1, \ldots, n \}$ with $m < n$; \\
    $R[X]$ & The polynomial ring with indeterminates corresponding to the coordinates of a tuple $X$; \\
    $\underline{x}_{n}$ & $(x_i)_{i\in \llbracket 0,n \rrbracket}$, where $n\in \mathbb{N}\setminus \{ 0 \} $. We will utilize this notation for other symbols in place of $x$; \\
        $\underline{x}_{n, m}^1$ & $(x_{i,j})_{(i,j)\in \nn{n}\times \nn{m}}$, where $n, m\in \mathbb{N} \setminus \{ 0 \}$. We will utilize this notation for other symbols in place of $x$; \\
    $\underline{x}_{n, m}$ & $(x_{i,j})_{(i,j)\in \nn{n}\times \on{m}}$, where $n\in \mathbb{N} \setminus \{ 0 \}$ and $m \in \mathbb{N}$. We will utilize this notation for other symbols in place of $x$; \\
    $[a]_I$ & The class of an element $a$ of some $R[X]$ in $R[X]/I$ where $X$ is some tuple and $I$ is an ideal of $R[X]$; \\
    $[ c ]_{n,C}$ & The class of an element $c\in C[t]$ in $C[t]/(t^{n+1})$, where $C$ is an $R$-algebra and $t$ is an indeterminate. We denote simply $[t]_{n}$ when $C$ is clear from the context; \\
    $\phi_\ast$ & The map $\phi \circ -: \operatorname{Hom}_{ \mathbf{Alg}_R} ( D, B) \rightarrow \operatorname{Hom}_{ \mathbf{Alg}_R} ( D, C)$ sending a map $\psi$ to $\phi \circ \psi$ where $\phi: B \rightarrow C$ is an $R$-algebra morphism; \\
    $\phi^\ast$ & The map $- \circ \phi: \operatorname{Hom}_{ \mathbf{Alg}_R} ( C, D) \rightarrow \operatorname{Hom}_{ \mathbf{Alg}_R} ( B, D)$ sending a map $\psi$ to $\psi \circ \phi$ where $\phi: B \rightarrow C$ is an $R$-algebra morphism; \\
    $\rho_{t, m}(\varphi)$ & The $R$-algebra morphism $\rho_{t,m}(\varphi) :  C [t]/ (t^{m+1}) \rightarrow D [t]/ (t^{m+1})$ sending $\sum_{j=0}^m c_{j} [t]_m^j$ to $\sum_{j=0}^m \varphi (c_{j}) [t]_m^j$ where $\varphi: C \rightarrow D$ is an $R$-algebra morphism, $t$ is an indeterminate, and $m \in \mathbb{N}$; \\
    ${\tau}^C_{m, s} $ & The $(m,s)$-truncation maps ${\tau}^C_{ m, s} \colon  C[t]/(t^{s+1})\rightarrow C[t]/(t^{m+1})$ sending $\sum_{j=0}^s c_{j} [t]_s^j$ to $\sum_{j=0}^m c_{j} [t]_m^j$ where $C$ is an $R$-algebra and  $s, m \in \mathbb{N}$ with $s\geq m$; \\
    $\underline{D}$ & $\operatorname{Hom}_R(D ,-)$ where $D$ is an $R$-algebra;\\
$N_s(\mathfrak{p})$ & $B_\mathfrak{p} / \mathfrak{p}^s B_\mathfrak{p} $ where $\mathfrak{p}$ is a prime ideal of a $R$-algebra $B$;\\
$k(\mathfrak{p})$ & $N_1(\mathfrak{p})$ where $\mathfrak{p}$ is a prime ideal of a $R$-algebra $B$;\\
$\zeta_{\mathfrak{p}, s}$ &\text{The canonical map from $B$ to $N_s(\mathfrak{p})$}.  \\
$\operatorname{Specm} ( B)$ & The maximal spectrum of an $R$-algebra $B$;\\
$\operatorname{Specm}_{J} ( B)$ & $\{ \mathfrak{m} \in  \operatorname{Specm}(B) | J \subseteq \mathfrak{m}\}= V(J) \cap  \operatorname{Specm} ( B)$ where $B$ is an $R$-algebra and $J$ is an ideal of $B$;\\
$\operatorname{Specm}_{ \operatorname{sep}/k} ( B)$  & $ \{ \mathfrak{m} \in  \operatorname{Specm}(B) | k(\mathfrak{m})/ k \text{ is separable } \}$ where $B$ is an $R$-algebra;\\
    $\mathbf{Alg}_R$ & The category of $R$-algebras whose morphisms are $R$-algebra morphisms;\\
$G_s(\mathfrak{p})$& The ramification of order $s$ at a prime ideal $\mathfrak{p}$ of an action of a group $G$ by an $R$-algebra $B$, where $s\in \mathbb{N}$. This is the set of elements of $G$ that stabilize $\mathfrak{p}$ setwise and also act as the identity on  $N_{s+1}(\mathfrak{p})$.
  \end{tabular}
\end{center}

    \newpage
\section{Preliminary background} 
Throughout this section, $R$ is a commutative ring with unity. 

\subsection{Results on derivations} 
We provide a brief recapitulation and proof of several results pertaining to derivations, which we will use in the subsequent discussions for their relationship with the first jet functor (see Definition \ref{lm}).

\begin{definition}
Given a ring $R$, a $R$-algebra $A$ and a $A$-module $M$, we say that a map $d\colon A \rightarrow M$ is a derivation with respect $R$ if it satisfy the following properties:
\begin{itemize}
\item $d(1)=0$;
\item For all $a,b\in A$ we have $d(ab)=a d(b) + b d(a)$, also called ``Leibnitz rule''.
\end{itemize}
We denote by $\operatorname{Der}_R(A,M)$ the set of such derivations.
\end{definition}
The set $\operatorname{Der}_R(A,M)$ is canonically endowed with a structure of $A$-module, since for $d_1,d_2\in \operatorname{Der}_R(A,M)$ and $a\in A$ we have that $d_1+d_2\in \operatorname{Der}_R(A,M)$ and $ad_1\in \operatorname{Der}_R(A,M)$.

\begin{lemma} \label{loc} 
Let \( R \) be a ring, \( A \) an \( R \)-algebra, and \( B \) an \( A \)-algebra with the structural morphism \( \varphi \colon A \rightarrow B \). Consider a derivation \( d \in \operatorname{Der}_R(A, B) \), and a multiplicative set \( S \subseteq A \) such that \( \varphi(S) \subseteq B^\times \). There exists a unique derivation in \( \operatorname{Der}_R(S^{-1} A, B) \) that extends \( d \). Specifically, for all \( a \in A \) and \( b \in S \), it is defined as:
\[ d_{S} \left( \frac{a}{b} \right) = \frac{d(a)}{\varphi(b)} - \frac{ad(b)}{\varphi(b)^2}. \]
When \( S = \{ a^n \mid n \in \mathbb{N} \} \) for some \( a \in A \), we denote \( d_{S} \) as \( d_a \) and when \( S = A \setminus \mathfrak{p}\) for some $\mathfrak{p}$ prime ideal of $A$, we denote \( d_{S} \) as \( d_{\mathfrak{p}} \). We also denote by \( d_{S} \) the canonical derivation in \( \operatorname{Der}_R(S^{-1} A, T^{-1}B) \) induced by \( d \in \operatorname{Der}_R(A, B) \), provided \( d(S) \subseteq T \).
\end{lemma}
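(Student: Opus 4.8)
\emph{Plan.} The key structural observation is that, since $\varphi(S)\subseteq B^\times$, the structural morphism $\varphi$ factors uniquely through the localization $A\to S^{-1}A$, so that $B$ is canonically an $S^{-1}A$-algebra, hence an $S^{-1}A$-module. Throughout I will keep careful track of the two module actions at play: an element $a\in A$ acts on $B$ by multiplication by $\varphi(a)$, while $\frac{a}{b}\in S^{-1}A$ acts by multiplication by $\varphi(a)\varphi(b)^{-1}$ (this is legitimate precisely because $\varphi(b)\in B^\times$). With this in place I would prove uniqueness first, since it both dispatches that half of the statement and shows that the displayed formula is forced; existence then reduces to checking that this forced formula is well defined and satisfies the derivation axioms.

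\emph{Uniqueness.} Suppose $D\in\operatorname{Der}_R(S^{-1}A,B)$ restricts to $d$ along $A\to S^{-1}A$, i.e. $D(\frac{a}{1})=d(a)$ for all $a\in A$. For $a\in A$ and $b\in S$ the identity $\frac{a}{1}=\frac{b}{1}\cdot\frac{a}{b}$ holds in $S^{-1}A$, so the Leibniz rule together with the two module actions above gives $d(a)=\varphi(b)\,D\!\left(\frac{a}{b}\right)+\varphi(a)\varphi(b)^{-1}d(b)$, where I used $D(\frac{b}{1})=d(b)$. As $\varphi(b)$ is a unit I may solve for $D(\frac{a}{b})$, obtaining exactly $\frac{d(a)}{\varphi(b)}-\frac{a\,d(b)}{\varphi(b)^2}$. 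Thus any extension is given by the stated formula, which proves uniqueness.

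\emph{Existence.} I would define $d_{S}$ by the displayed formula and verify, in order: that it is well defined, that $d_S(1)=0$, that it is additive and $R$-linear, that it obeys the Leibniz rule, and finally that it restricts to $d$ (immediate from $d_S(\frac{a}{1})=d(a)$). The one genuinely delicate point is well-definedness. I would reduce it as follows: if $\frac{a}{b}=\frac{a'}{b'}$ then there is $s\in S$ with $s(ab'-a'b)=0$, whence $\frac{a}{b}$ and $\frac{a'}{b'}$ admit the common representative $\frac{asb'}{bsb'}=\frac{a'sb}{b'sb}$ (equal numerators and equal denominators). It therefore suffices to show the formula is unchanged under $(a,b)\mapsto(ac,bc)$ for every $c\in S$. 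Expanding $\frac{d(ac)}{\varphi(bc)}-\frac{\varphi(ac)d(bc)}{\varphi(bc)^2}$ via the Leibniz rule and $\varphi(bc)=\varphi(b)\varphi(c)$, the two terms carrying $d(c)$ cancel, leaving $\frac{d(a)}{\varphi(b)}-\frac{\varphi(a)d(b)}{\varphi(b)^2}$, as required. The remaining axioms are routine Leibniz-rule computations once the module actions are respected.

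\emph{The induced map on localized targets, and the main obstacle.} For the last assertion I would apply the statement already proved to the $R$-derivation obtained by composing $d$ with the localization map $B\to T^{-1}B$, after noting that under the stated condition the relevant images of $S$ become invertible in $T^{-1}B$; the extension to $\operatorname{Der}_R(S^{-1}A,T^{-1}B)$ then exists and is unique by the same argument. I expect no conceptual obstacle; the only real difficulty is bookkeeping, namely distinguishing the $A$-action from the $S^{-1}A$-action on $B$ at every step, which is exactly what makes the well-definedness computation cancel cleanly. (Alternatively one could deduce the bijection $\operatorname{Der}_R(S^{-1}A,B)\cong\operatorname{Der}_R(A,B)$ abstractly from $\Omega_{S^{-1}A/R}\cong S^{-1}\Omega_{A/R}$, but the direct verification has the advantage of producing the explicit formula demanded by the statement.)
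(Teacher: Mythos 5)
Your proof is correct and follows essentially the same route as the paper's: the same Leibniz-rule computations show that the displayed formula defines a derivation extending $d$, and uniqueness is forced by a Leibniz manipulation (the paper pins down $d_0(1/b)$ from $0=d_0(b/b)$ and then expands $d_0(a/b)$; you solve directly from $\frac{a}{1}=\frac{b}{1}\cdot\frac{a}{b}$ — the two are equivalent). The one substantive difference is that you explicitly verify that the formula is well defined on equivalence classes of fractions, via the common-representative trick and the reduction to invariance under $(a,b)\mapsto(ac,bc)$ with $c\in S$; the paper's proof passes over this point in silence, even though the formula is stated in terms of a chosen representative, so your version actually fills a small gap. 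Your handling of the final clause about $\operatorname{Der}_R(S^{-1}A,T^{-1}B)$ — compose $d$ with the localization $B\to T^{-1}B$, check that the images of $S$ become units there, and reapply the result already proved — is also sound, and is more than the paper provides, since its proof does not address that clause at all.
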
 


\begin{proof} 
We begin by proving that $d_{S}$ is indeed a derivation which extends $d$.
By definition, $d$ is a morphism of $R$-modules, as for all $a\in A$, $b\in S$, $r\in R$, $$\quad d_{S}\left( \frac{ra}{b} \right) = \frac{rd(a)}{\varphi(b)} - \frac{rad(b)}{\varphi(b)^2} = r\cdot d_{S}\left(\frac{a}b \right), $$
and
$$ d_{S}\left( \frac{a+c}{b} \right) = \frac{d(a+c)}{\varphi(b)} - \frac{(a+c)d(b)}{\varphi(b)^2}= \frac{d(a)}{\varphi(b)} -\frac{ad(b)}{\varphi(b)^2}+ \frac{d(c)}{\varphi(b)} -\frac{cd(b)}{\varphi(b)^2} = d_{S}\left(\frac{a}b \right) + d_{S}\left(\frac{c}b \right) . $$
Moreover $d_{S}(1/1)= d_{S}(1)/1- d_{S}(1)/1=0$ and, for all $a_1,a_2\in A$, $b_1,b_2\in S$, we have
\begin{align*}
 d_{S}\left(\frac{a_1a_2}{b_1b_2} \right) & =  \frac{d(a_1a_2)}{\varphi(b_1b_2)} -\frac{\varphi(a_1a_2)d(b_1b_2)}{\varphi(b_1b_2)^2} 
=\frac{\varphi(a_2)}{\varphi(b_2)} d_{S}\left(\frac{a_1}{b_1} \right) +\frac{\varphi(a_1)}{\varphi(b_1)} d_{S}\left(\frac{a_2}{b_2} \right) ,
\end{align*}
Thus proving that a derivation which extends $d$ exists.

Let $d_{0} \in \operatorname{Der}_R(S^{-1} A, B)$ be a derivation extending $d$, we will prove that $d_0=d_{S}$. For any $b\in B$, we have
$$ 0=d_{0}(1) = d_0\left(\frac{b}b\right) = \varphi(b) d_0\left(\frac{1}b\right) +\frac1{\varphi(b)} d_0\left(\frac{b}{1}\right) . $$
Since $\varphi(B)$ is invertible in $B$ and since $d_0(b/1)=d(b)$, we infer that 
$$ d_0\left(\frac1b\right) = - \frac{d(b)}{\varphi(b)^2}  . $$
Hence, for any $a \in A$, $b\in S$, the following holds:
$$ d_0\left(\frac{a}b\right) = \frac1{\varphi(b)} d(a) - \frac{a d(b)}{\varphi(b)^2} = d_{S}\left(\frac{a}b\right) , $$
thus also the uniqueness is proved. 
\end{proof}


This corollary is a direct consequence of the previous lemma and \cite[Example 1.6]{Qing}.
\begin{corollary} \label{tr} 
Given $k \subseteq F \subseteq L$ fields, with $L/F$ finite separable, $T$ 
a set of algebraically independent indeterminates, $C$ a $L$-vector space, $d$ in $Der_k(F, C)$, and a function $\phi\colon T \rightarrow C$, there exists a unique derivation $d_{L(T),\phi}\in \Der_k ( L(T), C)$ extending $d$ such that for all $t\in T$, $d_{L(T),\phi}(t)= \phi(t)$. 
In particular, $\operatorname{dim}_k (Der_k ( L, C))= |T| = \operatorname{tr}(L/k)$. 
\end{corollary}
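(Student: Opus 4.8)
The plan is to exploit the tower $k \subseteq F \subseteq L \subseteq L(T)$ and to build the desired derivation in two successive stages: first extending across the finite separable part $F \subseteq L$, then across the purely transcendental part $L \subseteq L(T)$. At each stage I will prove both existence and uniqueness, so that the two uniqueness statements combine to give the uniqueness of $d_{L(T),\phi}$. The existence-and-uniqueness assertion is the real content, and the ``in particular'' dimension count will drop out by specialising the data.

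For the finite separable step, since $L/F$ is finite separable I may write $L = F[\alpha] = F[X]/(p(X))$ with $p$ the separable minimal polynomial of $\alpha$, so that $p'(\alpha) \neq 0$. A $k$-derivation $F[X]\to C$ extending $d$ is freely determined by the image of $X$, and it descends to the quotient $F[X]/(p)$ exactly when it annihilates $p$. Applying such a derivation to the relation $p(\alpha)=0$ yields $p^{d}(\alpha) + p'(\alpha)\,\tilde d(\alpha) = 0$, where $p^{d}$ denotes the polynomial obtained by applying $d$ to the coefficients of $p$. Because $p'(\alpha)$ is a nonzero element of the field $L$, it acts invertibly on the $L$-module $C$, so this equation has the unique solution $\tilde d(\alpha) = -p^{d}(\alpha)/p'(\alpha)$. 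This produces a unique $\tilde d \in \Der_k(L,C)$ extending $d$, which is exactly the combination of Lemma \ref{loc} with \cite[Example 1.6]{Qing}.

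For the transcendental step, I first extend $\tilde d$ to the polynomial ring $L[T]$. Viewing $L[T]$ as the free commutative $L$-algebra on $T$, a $k$-derivation $L[T]\to C$ restricting to $\tilde d$ on $L$ is uniquely and freely prescribed by its values on the indeterminates; assigning $t\mapsto \phi(t)$ produces a unique $D \in \Der_k(L[T],C)$, given by coefficientwise application of $\tilde d$ together with the chain-rule term $\sum_{t\in T}(\partial_t f)\,\phi(t)$. I then pass to $L(T)=\Frac(L[T])$: regarding $C$ with the $L(T)$-module structure implicit in the target $\Der_k(L(T),C)$, so that the nonzero elements of $L[T]$ act invertibly on $C$, Lemma \ref{loc} applied to the multiplicative set $S = L[T]\setminus\{0\}$ produces the unique derivation $D_S \in \Der_k(L(T),C)$ extending $D$. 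Setting $d_{L(T),\phi} := D_S$ gives existence, and stacking the three uniqueness assertions (on $L$, on $L[T]$, and under localisation) gives uniqueness.

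Finally, the dimension statement follows by specialising $F = k$, for which $\Der_k(k,C)=0$, and taking $T$ to be a separating transcendence basis so that $|T| = \operatorname{tr}(L/k)$. The first part then says that every element of $\Der_k(L(T),C)$ is uniquely determined by, and conversely realises, an arbitrary function $\phi\colon T \to C$, so the restriction $\delta \mapsto \delta|_T$ is a linear bijection onto the functions $T\to C$, whence the claimed dimension. I expect the only delicate points to be the two invertibility issues that make the extensions well defined: the use of separability to guarantee $p'(\alpha)\neq 0$ in the finite step, and the verification that $C$ carries the $L(T)$-module structure needed to invert denominators in the localisation step. Once these are in place, existence and uniqueness are formal.
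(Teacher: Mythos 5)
Your proposal is correct and is essentially the paper's own approach: the paper disposes of this corollary in one line, declaring it "a direct consequence" of Lemma \ref{loc} and \cite[Example 1.6]{Qing}, and your two-stage argument (primitive-element/minimal-polynomial extension across the finite separable step, then free extension to $L[T]$ followed by localization via Lemma \ref{loc}) is exactly the standard content those citations encode. The only imprecisions in your write-up — applying Lemma \ref{loc} to a module rather than algebra target, and reading the $L(T)$-module structure on $C$ and the meaning of the dimension count into the statement — are imprecisions already present in the paper's own formulation, not gaps in your argument.
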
 


\begin{lemma} \label{ins}
Let $k$ be a field, $F$ be a finitely generated field extension over $k$ where $\operatorname{char}(k)=p>0$, and $E= kF^p$ be the compositum of $k$ and $F^p$ in $F$. We have 
\begin{itemize} 
\item $[F:E]=p^e$, where $e\in \mathbb{N}$, 
\item $F=E(x_1 , \ldots , x_e)$, where $x_i^p =t_i\in E$ for all $i \in \{ 1, \ldots, e\}$, 
\item $[E(x_1 , \ldots, x_{i} ) : E(x_1, \ldots , x_{i-1})]=p$, for all $i \in \{ 2, \ldots, e\}$, and 
\item for any $C$ $F$-module, 
$$\begin{array}{lrll} \Phi_C : & \operatorname{Der}_k(F, C) & \rightarrow & C^e \\ 
& d & \mapsto & (d(x_i))_{i \in \{ 1, \ldots, e\}} \end{array}$$
is an $R$-linear isomorphism. 
Equivalently, $\Omega^1_{F/k} \simeq F^e$. 
\end{itemize}
\end{lemma}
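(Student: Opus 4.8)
The plan is to first pin down the field-theoretic structure of $F/E$ (the first three bullets), and then reduce the computation of $\operatorname{Der}_k(F,C)$ to the trivial case of derivations of a quotient of a polynomial ring.

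First I would record that $F/E$ is a finite, purely inseparable extension of exponent at most one. Indeed, for every $y\in F$ we have $y^p\in F^p\subseteq E$, so $y$ is a root of $X^p-y^p=(X-y)^p\in E[X]$; hence $y$ has degree $1$ or $p$ over $E$, and more generally over any intermediate field. Writing $F=k(a_1,\dots,a_n)$ (possible since $F/k$ is finitely generated) gives $F=E(a_1,\dots,a_n)$, so $[F:E]\leq p^n<\infty$. To get the first three items I would build a tower greedily: set $E_0=E$, and as long as $E_{i-1}\neq F$ pick $x_i\in F\setminus E_{i-1}$. Then $x_i^p=t_i\in E\subseteq E_{i-1}$, and the minimal polynomial of $x_i$ over $E_{i-1}$ divides $(X-x_i)^p$; since $x_i\notin E_{i-1}$, a short argument on the coefficient of $X^{j-1}$ in $(X-x_i)^j$ (using $0\neq j$ in $\mathbb{F}_p$ for $1\leq j\leq p-1$) forces this minimal polynomial to be exactly $X^p-t_i$, so $[E_i:E_{i-1}]=p$ with $E_i=E_{i-1}(x_i)$. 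Finiteness of $F/E$ makes the process stop after $e$ steps with $E_e=F$, and multiplicativity of degrees yields $[F:E]=p^e$. This proves the first three bullets and fixes the elements $x_1,\dots,x_e$ used to define $\Phi_C$.

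The crucial reduction is the identity $\operatorname{Der}_k(F,C)=\operatorname{Der}_E(F,C)$. The inclusion $\supseteq$ is immediate since $k\subseteq E$. For $\subseteq$, any $d\in\operatorname{Der}_k(F,C)$ kills $k$ by definition and kills $F^p$ because $d(y^p)=py^{p-1}d(y)=0$ in characteristic $p$; since $d$ respects sums, products, and inverses (the quotient rule being a special case of Lemma \ref{loc}), it vanishes on the subfield $E=kF^p$ generated by $k$ and $F^p$. Thus every $k$-derivation of $F$ is automatically an $E$-derivation.

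It then remains to compute $\operatorname{Der}_E(F,C)$. From the tower the surjection $E[X_1,\dots,X_e]\twoheadrightarrow F$, $X_i\mapsto x_i$, has kernel containing $(X_1^p-t_1,\dots,X_e^p-t_e)$; as $E[X_1,\dots,X_e]/(X_1^p-t_1,\dots,X_e^p-t_e)$ has $E$-dimension $p^e$ and $[F:E]=p^e$, the surjection is an isomorphism. Derivations of a polynomial ring over $E$ are freely determined by the images of the variables, and any assignment $X_i\mapsto c_i$ descends to $F$ since $d(X_i^p-t_i)=pX_i^{p-1}c_i-d(t_i)=0$ (the first term vanishing as $p=0$, the second as $t_i\in E$). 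Hence $d\mapsto(d(x_i))_i$ is a bijection $\operatorname{Der}_E(F,C)\xrightarrow{\sim}C^e$, manifestly linear over $F$ (in particular over the base ring); combined with the previous paragraph this map is exactly $\Phi_C$, so $\Phi_C$ is an isomorphism. Taking $C=\Omega^1_{F/k}$ with the universal derivation (or $C=F$ and dualizing) gives $\Omega^1_{F/k}\cong F^e$.

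The main obstacle I anticipate is confined to the first paragraph: one must justify that the greedy tower has each step of degree \emph{exactly} $p$ and that it terminates, so that $[F:E]=p^e$. Once that degree count is secured, the equality of dimensions makes the polynomial presentation $F\cong E[X_1,\dots,X_e]/(X_i^p-t_i)$ automatic, and the derivation computation in the last two paragraphs is purely formal.
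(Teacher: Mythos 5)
Your proposal is correct and follows essentially the same route as the paper's proof: build the tower $E=E_0\subset E_1\subset\cdots\subset E_e=F$ with each step of degree $p$, observe that any $d\in\operatorname{Der}_k(F,C)$ kills $F^p$ and $k$ and hence all of $E=kF^p$, and conclude that evaluation at $x_1,\dots,x_e$ is an isomorphism. The only difference is one of detail, not of method: where the paper says ``by induction on $e$'' and simply asserts that $\Phi_C$ is an isomorphism, you supply the minimal-polynomial argument for each step of the tower and the presentation $F\cong E[X_1,\dots,X_e]/(X_1^p-t_1,\dots,X_e^p-t_e)$ that makes the bijectivity of $\Phi_C$ rigorous.
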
 

\begin{proof}
By definition of $E$, every element $z\in F$ satisfies $z^p \in E$. Therefore, $F$ is algebraic over $E$, and since it is finitely generated, $[F:E]=p^e$, for some $e \in \mathbb{N}$. Using induction on $e$, one can prove that $F=E(x_1 , \ldots , x_e)$ where $x_i^p =t_i\in E$ for all $i \in \{ 1, \ldots, e\}$, and $[E(x_1 , \ldots, x_{i} ) : E(x_1, \ldots , x_{i-1})]=p$, for all $i \in \{ 2, \ldots, e\}$. Now, let $C$ be an $F$-module and $d \in \operatorname{Der}_k(F, C)$, we have $d|_{F^p} \equiv 0$. Indeed, for all $x\in F^p$, there exists $a \in F$ such that $x=a^p$. Thus $d(x) = p \cdot a^{p-1}  \cdot d(a) =0$. Therefore, since $d|_k\equiv 0$, $d_E \equiv 0$, the map 
$$\begin{array}{lrll} \Phi : & \operatorname{Der}_k(F, C) & \rightarrow & C^e \\ 
& d & \mapsto & (d(x_i))_{i \in \{ 1, \ldots, e\}} \end{array}$$
is an $F$-linear isomorphism. 
\end{proof}


\subsection{Properties of the maximal separable spectrum}
We begin this section by introducing notation related to the spectrum of a ring.

\begin{definition}
Let $k$ be a field and $B$ be a $k$-algebra.
\begin{enumerate}
    \item We define the \textsf{maximal spectrum of $B$}, denoted by $\operatorname{Specm}(B)$, as the set consisting of all the maximal ideals of $B$.
    \item For an ideal $J$ of $B$, we define the \textsf{maximal spectrum of $B$ at $J$} as the set
    \[
    \operatorname{Specm}_{J}(B) = \{\mathfrak{m} \in \operatorname{Specm}(B) \mid J \subseteq \mathfrak{m}\} = V(J) \cap \operatorname{Specm}(B).
    \]
    \item We define the \textsf{maximal separable spectrum} as
    \[
    \operatorname{Specm}_{\operatorname{sep}/k}(B) = \{\mathfrak{m} \in \operatorname{Specm}(B) \mid k(\mathfrak{m})/k \text{ is separable}\}.
    \]
\end{enumerate}
\end{definition}

We prove that the maximal separable spectrum of a polynomial ring over a field is dense in the corresponding spectrum.

\begin{lemma}
\label{lem:densityfreealg}
Let $k$ be a field. Then $\Specm_{\sep /k}(k[t_1,\dots,t_n])$ is dense in $\Spec(k[t_1,\dots, t_n])$ with respect to the Zariski topology.
\end{lemma}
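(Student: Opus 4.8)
The plan is to prove density by checking that $\Specm_{\sep/k}(k[t_1,\dots,t_n])$ meets every nonempty basic open set. Since the sets $D(f) = \{\mathfrak{p} \in \Spec(k[t_1,\dots,t_n]) : f \notin \mathfrak{p}\}$ with $f \neq 0$ form a basis for the Zariski topology, it suffices to show that for every nonzero $f \in k[t_1,\dots,t_n]$ there is a maximal ideal $\mathfrak{m}$ with $f \notin \mathfrak{m}$ and $k(\mathfrak{m})/k$ separable. I would then split into two cases according to whether $k$ is infinite or finite, the guiding observation being that the genuinely delicate situation—an imperfect base field, where the separable maximal ideals form a \emph{proper} subset of all maximal ideals—only occurs when $k$ is infinite.

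Suppose first that $k$ is infinite. Here I would use $k$-rational points. A maximal ideal of the form $\mathfrak{m}_a = (t_1 - a_1, \dots, t_n - a_n)$ with $a = (a_1,\dots,a_n) \in k^n$ has residue field $k(\mathfrak{m}_a) \cong k$, which is trivially separable over $k$, so $\mathfrak{m}_a \in \Specm_{\sep/k}$. By the standard fact that a nonzero polynomial over an infinite field cannot vanish identically on $k^n$ (proved by induction on $n$, using that a nonzero one-variable polynomial has only finitely many roots while $k$ is infinite), there exists $a \in k^n$ with $f(a) \neq 0$, that is, $f \notin \mathfrak{m}_a$. This produces the desired separable maximal ideal inside $D(f)$.

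Suppose now that $k$ is finite. Then $k$ is perfect, so every finite extension of $k$ is separable. Since $k[t_1,\dots,t_n]$ is a finitely generated $k$-algebra, Zariski's lemma shows that the residue field at every maximal ideal is a finite extension of $k$, hence separable; therefore $\Specm_{\sep/k}(k[t_1,\dots,t_n]) = \Specm(k[t_1,\dots,t_n])$. It then remains to prove that the maximal spectrum itself is dense, which follows from the fact that $k[t_1,\dots,t_n]$ is a Jacobson ring (a standard form of Hilbert's Nullstellensatz): in a Jacobson ring the closed points are dense, so every nonempty open set contains a maximal ideal. In particular $D(f)$ is nonempty, because $k[t_1,\dots,t_n]$ is a domain and $f \neq 0$ lies outside the generic point, so $D(f)$ contains a maximal ideal.

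The only real obstacle is conceptual rather than computational: one must notice that $\Specm_{\sep/k}$ can be strictly smaller than $\Specm$, so the bare Jacobson density of closed points does not by itself suffice. The resolution is the elementary remark that imperfect fields are necessarily infinite, so in the one case where separability is a nontrivial constraint we may fall back on rational points, whose residue field is $k$ itself and hence automatically separable. Assembling the two cases yields $D(f) \cap \Specm_{\sep/k} \neq \emptyset$ for every nonzero $f$, which is exactly the asserted density.
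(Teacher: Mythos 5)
Your proof is correct, but it follows a genuinely different route from the paper's. The paper gives a single uniform argument: it works with points whose coordinates lie in the separable closure $k^{\sep}$, which is always infinite, and proves by induction on $n$ that any nonzero $h$ has a non-vanishing point $\alpha \in (k^{\sep})^n$; the kernel of the evaluation map $t_i \mapsto \alpha_i$ is then a maximal ideal avoiding $h$ whose residue field embeds in $k^{\sep}$, hence is separable. You instead split into cases on whether $k$ is infinite or finite, anchored by the observation that imperfect fields are necessarily infinite. In the infinite case your argument is even more elementary than the paper's: you never need $k^{\sep}$, since $k$-rational points already have (trivially separable) residue field $k$, and non-vanishing on $k^n$ is the standard induction. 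In the finite case, however, you lean on heavier standard machinery—Zariski's lemma to see residue fields are finite over the perfect field $k$, and the Jacobson property of $k[t_1,\dots,t_n]$ to get density of closed points—whereas the paper's single argument covers this case for free, because $k^{\sep} = \bar{k}$ is infinite even when $k$ is not. In short: your decomposition isolates where separability is a genuine constraint and handles that case with minimal tools, while the paper's choice of $k^{\sep}$-points is precisely the device that merges your two cases into one induction.
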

\begin{proof}
It is enough to prove that there exists $\mathfrak{m}\in \Specm_{\sep /k}(k[t_1,\dots, t_n]_h)$, for any $h\in k[t_1,\dots, t_n]\backslash \{0\}$. 
To prove this statement, we proceed by induction on $n$.

If $n=1$, the result is true because $k^{\sep}$ is an infinite field and $h$ has a finite number of roots.

Now assume that the result holds for all polynomials in $k[t_1,\dots,t_{n-1}]$. We will prove the result for all polynomials in $k[t_1,\dots,t_{n}]$. Let $h \in k[t_1,\dots,t_{n}] \backslash \{ 0 \}$. We write
$$ h=\sum_{j=0}^m h_j x_n^j \, ,$$
where $h_j\in k[t_1,\dots, t_{n-1}]$ for all $0\leq j\leq m $. Since $h\neq 0$, there exists $j \in \{ 0 , \dots  , m\}$ such that $h_j\neq 0$. By the inductive hypothesis, there exist $\alpha_1,\dots,\alpha_{n-1}\in {k^{\sep}}$ such that $h_j(\alpha_1,\dots,\alpha_{n-1})\neq 0$. In particular, the polynomial $h(\alpha_1,\dots,\alpha_{n-1}, t)\in k^{\sep}[t]$ is not equal to $0$. Hence, there exists $\alpha_{n}\in k^{\sep}$ such that $h(\alpha_1,\dots, \alpha_n)\neq 0$. 

Consider the morphism
\begin{align*}
\varphi_{\alpha} \colon k[t_1,\dots, t_n] &\rightarrow k^{\sep} \\
t_i & \mapsto \alpha_i \, .
\end{align*}
Let $\mathfrak{m}\colon = \ker(\varphi_{\alpha})$. Then $\mathfrak{m}$ is a maximal ideal with a separable residue field since $k[t_1,\dots, t_n]/\mathfrak{m}$ embeds in $k^{\sep}$. Moreover, $h\notin \mathfrak{m}$ as $h(\alpha_1,\dots,\alpha_n)\neq 0$. This concludes the proof.
\end{proof}

In the forthcoming lemma, we demonstrate the density of the separable maximal spectrum within the spectrum of an integral closure.

\begin{prop} \label{seplemma}
Let $k$ be a field, and $F/k(t_1,\dots, t_n)$ be a finite separable field extension. Let $\mathcal{O}_F$ be the integral closure of $k[t_1,\dots,t_n]$ in $F$. Then $\Specm_{\sep /k}(\mathcal{O}_F)$ is a dense subset of $\Spec(\mathcal{O}_F)$.
\end{prop}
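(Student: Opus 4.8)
The plan is to reduce the density statement for $\mathcal{O}_F$ to the density statement already proved for the polynomial ring $k[t_1,\dots,t_n]$ in Lemma \ref{lem:densityfreealg}, by exploiting the finiteness of the morphism $\pi\colon \Spec(\mathcal{O}_F)\to \Spec(k[t_1,\dots,t_n])$. Recall that since $\mathcal{O}_F$ is the integral closure of $k[t_1,\dots,t_n]$ in a finite separable extension $F$ of its fraction field, $\mathcal{O}_F$ is a finite $k[t_1,\dots,t_n]$-module; in particular $\pi$ is a finite, hence closed, dominant morphism, and it is surjective by the lying-over theorem. To show $\Specm_{\sep/k}(\mathcal{O}_F)$ is dense, it suffices to produce, for every nonzero $g\in\mathcal{O}_F$, a maximal ideal $\mathfrak{n}\in\Specm(\mathcal{O}_F)$ with $g\notin\mathfrak{n}$ and $k(\mathfrak{n})/k$ separable.

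First I would set up a nonvanishing target in the base. Given $0\neq g\in\mathcal{O}_F$, let $a\in k[t_1,\dots,t_n]\setminus\{0\}$ be the norm (or the constant term of an integral dependence relation) of $g$ from $F$ down to $k(t_1,\dots,t_n)$, cleared of denominators, so that $a$ lies in the ideal of $k[t_1,\dots,t_n]$ generated by the image of multiplication by $g$; concretely $a$ is chosen so that the fiber of $\pi$ over any point where $a\neq 0$ meets the locus $g\neq 0$. By Lemma \ref{lem:densityfreealg}, $\Specm_{\sep/k}(k[t_1,\dots,t_n])$ is dense, so there is a maximal ideal $\mathfrak{m}=\ker(\varphi_\alpha)$ with $\alpha\in (k^{\sep})^n$ and $a(\alpha)\neq 0$, whose residue field $k(\mathfrak{m})$ embeds in $k^{\sep}$ and is therefore separable over $k$.

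Next I would analyze the fiber of $\pi$ over $\mathfrak{m}$. The fiber ring is $\mathcal{O}_F\otimes_{k[t_1,\dots,t_n]} k(\mathfrak{m})$, a finite $k(\mathfrak{m})$-algebra whose maximal ideals are exactly the primes of $\mathcal{O}_F$ lying over $\mathfrak{m}$; because $F/k(t_1,\dots,t_n)$ is separable, this fiber is a product of finite separable field extensions of $k(\mathfrak{m})$ (generically étale over the base, and the residue fields of $\mathcal{O}_F$ at closed points over $\mathfrak{m}$ are separable over $k(\mathfrak{m})$). Each such residue field $k(\mathfrak{n})$ is then separable over $k(\mathfrak{m})$, which is separable over $k$, so $k(\mathfrak{n})/k$ is separable by transitivity of separability for algebraic extensions. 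Finally, by the choice of $a$ and the lying-over/going-up behaviour over the point $\mathfrak{m}$ where $a(\alpha)\neq 0$, at least one prime $\mathfrak{n}$ over $\mathfrak{m}$ avoids $g$; since $\mathfrak{n}$ lies over a maximal ideal under a finite morphism it is itself maximal. This exhibits the required $\mathfrak{n}\in\Specm_{\sep/k}(\mathcal{O}_F)$ with $g\notin\mathfrak{n}$.

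The main obstacle I anticipate is the bookkeeping in the previous paragraph: one must verify both that the chosen base point $\mathfrak{m}$ has a fiber containing a point outside $V(g)$ (this is where the correct choice of $a$ as a norm-type element, together with surjectivity of $\pi$ on the relevant open set, is essential) and that separability is genuinely inherited, i.e.\ that the separability of $F$ over $k(t_1,\dots,t_n)$ forces the residue extensions $k(\mathfrak{n})/k(\mathfrak{m})$ to be separable rather than merely algebraic. The cleanest way to secure the latter is to note that $F/k(t_1,\dots,t_n)$ separable implies $\mathcal{O}_F$ is generically unramified over the base and, more robustly, to argue directly that a residue field of $\mathcal{O}_F$ is a finite extension of $k(\mathfrak{m})$ generated by elements whose minimal polynomials over $k(\mathfrak{m})$ divide separable polynomials coming from primitive elements of $F/k(t_1,\dots,t_n)$; combined with $k(\mathfrak{m})/k$ separable, transitivity then yields $k(\mathfrak{n})/k$ separable.
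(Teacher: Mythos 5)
Your overall strategy (reduce to Lemma \ref{lem:densityfreealg} via a norm/constant-term trick, then pull separability of residue fields back through the finite morphism $\pi$) is the same as the paper's, and the norm step itself is sound: if $a$ is (up to sign) the constant term of the minimal polynomial of $g$, then $a\notin\mathfrak{m}$ forces $g\notin\mathfrak{n}$ for every $\mathfrak{n}$ over $\mathfrak{m}$. But there is a genuine gap in the fiber analysis. The claim that ``because $F/k(t_1,\dots,t_n)$ is separable, this fiber is a product of finite separable field extensions of $k(\mathfrak{m})$'' is false for an arbitrary maximal ideal $\mathfrak{m}$: separability of the generic extension only makes $\pi$ \emph{generically} \'etale, and your $\mathfrak{m}$ is constrained only by $a(\alpha)\neq 0$, which does not force $\mathfrak{m}$ to avoid the branch locus. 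Concretely, take $k=\mathbb{F}_p(u)$, $n=1$, and $F=k(t)[y]/(y^p+ty-u)$. This extension is finite and separable over $k(t)$ (the $y$-derivative is $t\neq 0$), and $y\in\mathcal{O}_F$. Over $\mathfrak{m}=(t)$ one has $\bar{y}^p=u$ in every residue field $k(\mathfrak{n})$ with $\mathfrak{n}$ above $\mathfrak{m}$, so $k(\mathfrak{n})\supseteq k(u^{1/p})$ is \emph{inseparable} over $k=k(\mathfrak{m})$. Your fallback argument in the last paragraph fails for the same reason: the reduction modulo $\mathfrak{m}$ of a separable polynomial need not be separable --- separability of the reduction is precisely the condition that the discriminant of that polynomial does not lie in $\mathfrak{m}$, and nothing in your construction ensures this.

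The repair is exactly the ingredient the paper builds in and your proposal omits: fix a primitive element $y\in\mathcal{O}_F$ with minimal polynomial $f$ over $k[t_1,\dots,t_n]$, and apply Lemma \ref{lem:densityfreealg} not just to your $a$ but to the product of $a$ with $\operatorname{disc}(f)$ (and with the denominators needed to write an integral basis of $\mathcal{O}_F$ in terms of powers of $y$). Choosing $\mathfrak{m}$ to avoid this single nonzero polynomial puts $\mathfrak{m}$ in the \'etale locus: the paper first inverts such an element $g$, so that $\mathcal{O}_{F,g}\cong k[t_1,\dots,t_n]_g[y]$ and the fiber over $\mathfrak{m}$ is $k(\mathfrak{m})[x]/(\bar{f})$ with $\bar{f}$ having distinct roots (because $\operatorname{disc}(f)\notin\mathfrak{m}$), whence every $k(\mathfrak{n})/k(\mathfrak{m})$ is separable and transitivity finishes the argument. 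With that one extra constraint on $\mathfrak{m}$ your proof goes through; without it, the key separability assertion is not merely unjustified but false.
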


\begin{proof}
Since $F/k(t_1,\dots, t_n)$ is a finite separable extension, by the primitive element theorem, there exists an element $y \in F$ such that $F=k(t_1,\dots, t_n)$. Without loss of generality, we can assume that $y \in  \mathcal{O}_F$.

We know that ${\mathcal{O}_F}$ admits an integral basis $\{\alpha_1, \dots, \alpha_s\}$ over $k[t_1, \dots,t_n]$ (see for instance \cite[Proposition 2.10]{Neukirch}). Moreover, for all $i \in \{ 1, \dots, s\}$, we have that $\alpha_i = \sum_{j=1}^{l_i} d_{i,j} y^j$, where $d_{i,j}=\frac{h_{i,j}}{g_{i,j}}$, for some $h_{i,j} , g_{i,j}\in k[t_1, \dots,t_n]$. We set $g:= \prod_{i=1}^s  \prod_{j=1}^{l_i} g_{i,j}$. Clearly, the inclusion $\iota: k[t_1,\dots, t_n][y] \rightarrow \mathcal{O}_{F}$ induces also an isomorphism of $k[t_1,\dots, t_n]_g$-algebras 
$$\iota_g \colon k[t_1,\dots, t_n]_g[y]\rightarrow \mathcal{O}_{F,g}.$$ Up to replacing $g$ with one of its multiples, we assume that the discriminant of the minimal polynomial of $y$ over $k[t_1, \dots,t_n]$ divides $g$. Since $g\neq 0$, $\Spec(\mathcal{O}_{F,g})$ is a dense open subset of $\Spec(\mathcal{O}_F)$. Therefore, it is enough to prove that $\Specm_{\sep /k}(\mathcal{O}_{F,g})$ is dense in $\Spec(\mathcal{O}_{F,g})$ to achieve our conclusion.

Let $h\in \mathcal{O}_{F,g}\backslash\{0\}$. Noting that $\mathcal{O}_{F,g}$ is the integral closure of $k[t_1,\dots, t_n]_g$ in $F$. We denote by $p(x)$ the minimal polynomial of $h$ over $k[x_1, \dots  , x_n]_g$. Since $p$ is irreducible, $p(0)\neq 0$. Hence, by Lemma \ref{lem:densityfreealg}, we infer that there exists $ \mathfrak{m}\in \Specm_{\sep /k}(k[t_1,\dots, t_n]_{g})$ such that $p(0) \notin  \mathfrak{m}$. For any $\mathfrak{n}\in \Specm(\mathcal{O}_{F,g})$ containing $\mathfrak{m}\mathcal{O}_{F,g}$, then $h \notin \mathfrak{n}$ (otherwise the product $p(0)$ of the conjugates of $h$ would be in $\mathfrak{m}$), and $k(\mathfrak{n})$ is separable over $k$. Indeed, we have that 
$$
\mathcal{O}_{F,g}/\mathfrak{m}\mathcal{O}_{F,g} \cong k[t_1,\dots, t_n]_g[x]/(f) \otimes_{k[t_1,\dots, t_n]_g} k(\mathfrak{m})=k(\mathfrak{m})[x]/(f(c_1,\dots,c_n)(x))  \, ,
$$
where $c_i= t_i +\mathfrak{m}\in k(\mathfrak{m})$ and $f$ is the minimal polynomial of $y$ over $k[t_1, \dots,t_n]$; since we assumed that the discriminant of $f$ divides $g$ and that $g\notin \mathfrak{m}$, we infer that $f(c_1,\dots,c_n,x)$ has distinct roots in a fixed algebraic closure of $k(\mathfrak{m})$, therefore $k(\mathfrak{n})/k(\mathfrak{m})$ is separable, which, in turn, means that $k(\mathfrak{n})/k$ is separable. This concludes the proof.
\end{proof}
\subsection{Formal smoothness}
In this section, we establish various characterization of formal smoothness. 
We start with a lemma which is a direct consequence of \cite[Lemma 10.158.8]{Stack} and the definition of formal smoothness.
\begin{lemma}\label{graded} 
Let $B$ be an algebra over $k$, and $\mathfrak{p}$ be a prime ideal of $B$ such that $k(\mathfrak{p})$ is separably generated over $k$. 
Then the canonical morphism $\pi_{2,1, \mathfrak{p}} : N_2(\mathfrak{p}) \rightarrow k( \mathfrak{p})$ admits a section, and $N_2 (\mathfrak{p}) \simeq k(\mathfrak{p}) \oplus \mathfrak{p}_\mathfrak{p} / \mathfrak{p}^2_\mathfrak{p} $ as an algebra.
\end{lemma}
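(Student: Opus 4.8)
The plan is to realize the surjection $\pi_{2,1,\mathfrak{p}}$ as a square-zero extension of $k$-algebras and then obtain the section from formal smoothness of the residue field. First I would unwind the notation: by definition $N_2(\mathfrak{p})=B_\mathfrak{p}/\mathfrak{p}_\mathfrak{p}^2$ and $k(\mathfrak{p})=N_1(\mathfrak{p})=B_\mathfrak{p}/\mathfrak{p}_\mathfrak{p}$, so $\pi_{2,1,\mathfrak{p}}$ is the canonical quotient and its kernel is $\mathfrak{p}_\mathfrak{p}/\mathfrak{p}_\mathfrak{p}^2$. This kernel is a square-zero ideal of $N_2(\mathfrak{p})$, since $(\mathfrak{p}_\mathfrak{p}/\mathfrak{p}_\mathfrak{p}^2)^2=\mathfrak{p}_\mathfrak{p}^2/\mathfrak{p}_\mathfrak{p}^2=0$ in $N_2(\mathfrak{p})$, and it is naturally a $k(\mathfrak{p})$-vector space because it is annihilated by $\mathfrak{p}_\mathfrak{p}$. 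Thus I would record the short exact sequence of $k$-algebras $0\to \mathfrak{p}_\mathfrak{p}/\mathfrak{p}_\mathfrak{p}^2\to N_2(\mathfrak{p})\xrightarrow{\pi_{2,1,\mathfrak{p}}} k(\mathfrak{p})\to 0$ with square-zero kernel.

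Next I would invoke the hypothesis. Since $k(\mathfrak{p})/k$ is separably generated, \cite[Lemma 10.158.8]{Stack} shows that $k(\mathfrak{p})$ is formally smooth over $k$. Applying the infinitesimal lifting property of formal smoothness to the identity morphism $\mathrm{id}_{k(\mathfrak{p})}\colon k(\mathfrak{p})\to k(\mathfrak{p})$ against the square-zero surjection $\pi_{2,1,\mathfrak{p}}$, I obtain a $k$-algebra homomorphism $\sigma\colon k(\mathfrak{p})\to N_2(\mathfrak{p})$ with $\pi_{2,1,\mathfrak{p}}\circ\sigma=\mathrm{id}_{k(\mathfrak{p})}$. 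This $\sigma$ is the desired section.

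Finally I would deduce the algebra decomposition from the section. The section $\sigma$ splits the sequence as $k(\mathfrak{p})$-modules, giving $N_2(\mathfrak{p})=\sigma(k(\mathfrak{p}))\oplus \mathfrak{p}_\mathfrak{p}/\mathfrak{p}_\mathfrak{p}^2$, and I would then check that the resulting bijection $k(\mathfrak{p})\oplus \mathfrak{p}_\mathfrak{p}/\mathfrak{p}_\mathfrak{p}^2\to N_2(\mathfrak{p})$, $(a,m)\mapsto \sigma(a)+m$, is a ring isomorphism once the left-hand side carries the trivial square-zero product $(a,m)(a',m')=(aa',am'+a'm)$. Indeed $\sigma(a)\sigma(a')=\sigma(aa')$ because $\sigma$ is multiplicative, $mm'=0$ because the kernel is square-zero, and $\sigma(a)\,m=a\cdot m$ because $\sigma(a)\equiv a \pmod{\mathfrak{p}_\mathfrak{p}}$ while $m$ is annihilated by $\mathfrak{p}_\mathfrak{p}$. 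This yields the asserted isomorphism $N_2(\mathfrak{p})\simeq k(\mathfrak{p})\oplus \mathfrak{p}_\mathfrak{p}/\mathfrak{p}_\mathfrak{p}^2$ of algebras.

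The substantive input is the implication that a separably generated residue field is formally smooth over $k$, which the lemma defers to the cited Stacks result; the remaining steps are formal. The only points demanding care are verifying that the kernel is genuinely square-zero, so that the lifting criterion applies, and that $\sigma$ is a morphism of $k$-algebras rather than merely a $k(\mathfrak{p})$-linear splitting, since it is precisely the multiplicativity of $\sigma$ that upgrades the module-level decomposition to an isomorphism of algebras.
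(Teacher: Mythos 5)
Your proposal is correct and follows exactly the route the paper intends: the paper gives no written proof, stating only that the lemma is ``a direct consequence of \cite[Lemma 10.158.8]{Stack} and the definition of formal smoothness,'' which is precisely your argument (separably generated $\Rightarrow$ formally smooth, then the lifting property against the square-zero surjection $N_2(\mathfrak{p})\twoheadrightarrow k(\mathfrak{p})$ yields the section, and the splitting upgrades to an algebra isomorphism). Your writeup simply supplies the details the paper leaves implicit, including the worthwhile check that the section is multiplicative and that the module splitting respects the square-zero ring structure.
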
 
In the following lemma, we provide characterizations of formal smoothness that will be useful in our subsequent discussions.
\begin{thm}\label{fs}
Let $A = k[x_1, \dots , x_n]$ be a polynomial ring in $n$ indeterminates, $\mathfrak{p}$ be a prime ideal of $A$, and $\bar{k}$ be an algebraic closure of $k$. The following statements are equivalent:
\begin{enumerate}
    \item $k(\mathfrak{p})$ is formally smooth over $k$.
    \item There exist $r \in \mathbb{N}$ and $k$-derivations $d_i\in \operatorname{Der}_k(A,A)$ and $g_i \in A$ for all $i \in \nn{r}$ such that $\mathfrak{p}A_\mathfrak{p} = \langle \frac{g_1}{1}, \dots , \frac{g_r}{1} \rangle $, and $\det((d_i(g_j))_{(i,j) \in \nn{r}^2 } ) \notin \mathfrak{p}$.
    \item $k(\mathfrak{p})$ is separably generated over $k$.
    \item $k(\mathfrak{p}) \otimes \bar{k}$ is reduced.
    \item $\operatorname{Specm}_{\operatorname{sep}}(A) \cap V(\mathfrak{p})$ is dense in $V(\mathfrak{p})$.
    \item $\mathfrak{p} = \bigcap_{\mathfrak{m} \in  \operatorname{Specm}_{\operatorname{sep}}(A) \cap V(\mathfrak{p})} \mathfrak{m}$.
\end{enumerate}
\end{thm}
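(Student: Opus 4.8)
The plan is to prove the six conditions equivalent by splitting them into three blocks and then linking the blocks; several of the links are classical facts about field extensions that I would simply cite, while the genuinely new content sits in the Jacobian reformulation (2) and in the density conditions (5), (6). The first block is the equivalence of (1), (3) and (4), which is pure field theory for the finitely generated extension $k(\mathfrak{p})/k$: formal smoothness is equivalent to separability, separability is equivalent to separable generation by MacLane's criterion, and separability is by definition equivalent to the geometric reducedness of $k(\mathfrak{p})\otimes_k\bar{k}$. I would invoke these standard facts (e.g. from \cite{Stack}) so that (1), (3), (4) may be used interchangeably afterwards.

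The second block is the Jacobian criterion (2), which I would show equivalent to (1). Since $A=k[x_1,\dots,x_n]$ is regular with $\Omega^1_{A/k}$ free on $dx_1,\dots,dx_n$, the module $\operatorname{Der}_k(A,A)$ is free on $\partial_{x_1},\dots,\partial_{x_n}$, and $A_\mathfrak{p}$ is a regular local ring with residue field $k(\mathfrak{p})=A_\mathfrak{p}/\mathfrak{p}A_\mathfrak{p}$; thus $\mathfrak{p}A_\mathfrak{p}$ is generated by a regular system of parameters and $\mathfrak{p}_\mathfrak{p}/\mathfrak{p}_\mathfrak{p}^2$ is $k(\mathfrak{p})$-free of rank $r_0=\operatorname{ht}(\mathfrak{p})$. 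The key tool is the conormal sequence $\mathfrak{p}_\mathfrak{p}/\mathfrak{p}_\mathfrak{p}^2 \xrightarrow{\ \delta\ } \Omega^1_{A/k}\otimes_A k(\mathfrak{p}) \to \Omega^1_{k(\mathfrak{p})/k}\to 0$. For a minimal generating set $g_1,\dots,g_r$ of $\mathfrak{p}A_\mathfrak{p}$ (so $r=r_0$), the existence of $k$-derivations $d_i$ with $\det((d_i(g_j)))\notin\mathfrak{p}$ says exactly that the classes $\delta([g_j])$ are $k(\mathfrak{p})$-linearly independent, i.e. that $\delta$ is injective; since $A_\mathfrak{p}$ is formally smooth over $k$, this injectivity is equivalent to formal smoothness of $k(\mathfrak{p})/k$. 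Equivalently, using Lemma \ref{ins} and Corollary \ref{tr} to identify $\dim_{k(\mathfrak{p})}\Omega^1_{k(\mathfrak{p})/k}$, injectivity of $\delta$ is equivalent to $\dim_{k(\mathfrak{p})}\Omega^1_{k(\mathfrak{p})/k}=\operatorname{trdeg}_k k(\mathfrak{p})=n-r_0$, which is the separable-generation criterion. This yields (2)$\Leftrightarrow$(1).

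The third block handles the density conditions. The equivalence (5)$\Leftrightarrow$(6) is elementary: $A$ is Jacobson, so for $S=\operatorname{Specm}_{\operatorname{sep}}(A)\cap V(\mathfrak{p})$ one has $\overline{S}=V(\bigcap_{\mathfrak{m}\in S}\mathfrak{m})$, and because each $\mathfrak{m}\supseteq\mathfrak{p}$ the intersection $\bigcap_{\mathfrak{m}\in S}\mathfrak{m}$ is a radical ideal containing $\mathfrak{p}$; hence $\overline{S}=V(\mathfrak{p})$ if and only if $\bigcap_{\mathfrak{m}\in S}\mathfrak{m}=\mathfrak{p}$. For (3)$\Rightarrow$(5) I would pass to the domain $A/\mathfrak{p}$, whose fraction field is $k(\mathfrak{p})$; separable generation lets me write $k(\mathfrak{p})$ as a finite separable extension of a rational function field, placing $A/\mathfrak{p}$ birationally into the integral-closure situation of Proposition \ref{seplemma}, whose density statement then transfers to a dense set of separable closed points of $V(\mathfrak{p})$ (alternatively, (2) presents the smooth locus of $V(\mathfrak{p})$ as a nonempty, hence dense, open subset whose closed points are separable and are dense by Jacobson-ness). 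For the converse (5)$\Rightarrow$(3), I would use that the regular locus of the excellent domain $A/\mathfrak{p}$ is open and dense; density of the separable closed points then forces a separable closed point $\mathfrak{m}$ to lie in this regular locus, and for a finite-type $k$-algebra a closed point that is both a regular point and has separable residue field is a smooth point. Smoothness being open, $A/\mathfrak{p}$ is smooth on a neighborhood of $\mathfrak{m}$; since this neighborhood contains the generic point, $k(\mathfrak{p})\otimes_k\bar{k}$ is reduced, which is (4).

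I expect the main obstacle to be the converse density implication (5)$\Rightarrow$(3): all the other links are either classical or direct consequences of the conormal sequence, whereas passing from a dense supply of separable closed points back to separability of the generic residue field genuinely requires combining openness of the regular locus, the characterization of smooth closed points as regular points with separable residue field, and the spreading-out of geometric reducedness from an open neighborhood to the generic point. A secondary technical point to check carefully is that the determinant condition in (2) really translates into injectivity of $\delta$ under the identifications supplied by Lemma \ref{ins} and Corollary \ref{tr}, i.e. that choosing the $d_i$ amounts to exhibiting a dual basis detecting linear independence of the $\delta([g_j])$.
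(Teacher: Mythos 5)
Your proposal is correct, and its skeleton largely matches the paper's: the paper also dispatches $(1)\Leftrightarrow(2)$, $(1)\Leftrightarrow(3)$, $(1)\Leftrightarrow(4)$ by citation (in particular \cite[Theorem 64]{Matsumura} for the Jacobian criterion, rather than reproving it via the conormal sequence as you sketch), treats $(5)\Leftrightarrow(6)$ as immediate from the definition of density, and proves $(3)\Rightarrow(5)$ exactly as you do, by realizing $A/\mathfrak{p}$ birationally as a localization of the integral closure $\overline{k[T]}$ and invoking Proposition \ref{seplemma}. The one genuinely different link is the closing implication. The paper proves $(5)\Rightarrow(4)$ by contrapositive with a purely tensor-theoretic argument: if $C=(A/\mathfrak{p})\otimes_k\bar{k}$ is non-reduced, then for \emph{every} maximal ideal $\mathfrak{m}\supseteq\mathfrak{p}$ one has $k(\mathfrak{m})\otimes_k\bar{k}\simeq k(\mathfrak{m})\otimes_{A/\mathfrak{p}}C$, and faithful flatness of $k(\mathfrak{m})$ over $k$ forces the nilpotents of $C$ to survive, so $k(\mathfrak{m})/k$ is inseparable; thus the separable locus in $V(\mathfrak{p})$ is actually \emph{empty}, not merely non-dense. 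Your route instead goes through openness and density of the regular locus of $A/\mathfrak{p}$, the fact that a regular closed point with separable residue field is a smooth point, and openness of the smooth locus to spread smoothness back to the generic point. Both are valid: the paper's argument is more elementary and self-contained (only tensor products and faithful flatness) and yields the stronger dichotomy that the separable locus is either empty or dense, while yours trades that for standard but heavier input (excellence/openness of the regular locus, the regular-plus-separable criterion for smoothness) and gives a more geometric explanation of why density can fail. Your Jacobson-property justification of $(5)\Leftrightarrow(6)$ is also slightly more than is needed, since the closure of any set of primes is $V$ of their intersection in any affine scheme; this is harmless.
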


\begin{proof}
Let $S = \{ x_1, \dots , x_n \}$, and we write $k[S] := k[x_1, \dots , x_n]$. Let $\mathfrak{p} \in \operatorname{Spec}(k[S])$.

 $(1) \Leftrightarrow (2)$ This equivalence is guaranteed by \cite[Theorem 64]{Matsumura} applied to $A = k[x_1, \dots , x_n]$, $B = A/\mathfrak{p}$ at the prime ideal $(0)$ since $A$ is smooth over $k$.
   
 $(1) \Leftrightarrow (3)$ This follows from \cite[Corollary 2.4.6]{Smooth} or \cite[Theorem 62]{Matsumura} or \cite[Lemma 10.158.8.]{Stack}.
    
 $(1) \Leftrightarrow (4)$ This follows from \cite[(27.F) Lemma 3]{Matsumura}.
    
 $(5) \Leftrightarrow (6)$ This follows from the definition of density in $V(\mathfrak{p})$ with respect to the Zariski Topology.
    
 $(3) \Rightarrow (5)$ Let $R =A/\mathfrak{p}$. As $k(\mathfrak{p})/k$ is separably generated, by \cite[Lemma 10.42.3.]{Stack}, there exists a finite subset $T =\{ t_1, \dots, t_r\} \subseteq k(\mathfrak{p})$ and $\theta \in k(\mathfrak{p})$ such that $k(\mathfrak{p}) / k( T)$ is finite separable and $k(\mathfrak{p}) = k(T)(\theta)$. We denote $\overline{k[T]}$ to be the integral closure of $k[T]$ in $k(\mathfrak{p})$. Without loss of generality, we can assume that $\theta \in \overline{k[T]}$. We recall that $k(\mathfrak{p})$ is the quotient $A_{\mathfrak{p}} / (\mathfrak{p}A_{\mathfrak{p}})$. For all $i \in \{ 1, \dots , r \}$, $t_i= [ \frac{a_i}{b_i}]_{\mathfrak{p}A_{\mathfrak{p}}}$ with $a_i \in A$ and $b_i \in A \setminus \mathfrak{p}$. We set $f=[ \prod_{i=1}^r b_i]_{\mathfrak{p}A_{\mathfrak{p}}}$. We deduce that that $R_{f}$ embeds in $\overline{k[T]}$. By \cite[Proposition 2.10]{Neukirch}, $\overline{k[T]}$ admits an integral basis $\{\alpha_1, \dots, \alpha_s\}$ over $k[T]$.  Moreover, for all $i \in \{ 1, \dots, s\}$, we have that $\alpha_i = \sum_{j=1}^{l_i} d_{i,j} \theta^j$, where $d_{i,j}=\frac{h_{i,j}}{g_{i,j}}$, for some $h_{i,j} , g_{i,j}\in k[T]$. We set $g:= \prod_{i=1}^s  \prod_{j=1}^{l_i} g_{i,j}$. Finally, $R_f \simeq \overline{ k[T]}_g$. Since $\Spec (R_f)$ is not empty and $R$ is irreducible, $\Spec (R_f)$  is dense in $\Spec(R)$. Similarly, $\Spec (\overline{ k[T]}_g)$ is dense in $\Spec (\overline{ k[T]})$. Therefore, $\Specm_{\sep /k} (R)$  is dense in $\Spec (R)$ if and only if $\Specm_{\sep} (\overline{ k[T]})$  is dense in $\Spec (\overline{ k[T]})$. By Lemma \ref{seplemma}, we infer that $\Specm_{\sep /k } (\overline{ k[T]})$ is dense in $\Spec (\overline{ k[T]})$. Thus, $\Specm_{\sep /k } (R)$  is dense in $\Spec (R)$. 
    
 $(5) \Rightarrow (4)$ We prove the implication by proving its contrapositive. Assume that $k(\mathfrak{p})\otimes_k \bar{k}$ is not reduced. Let $C \colon= A/ \mathfrak{p} \otimes_k \bar{k}$. By \cite[Chapter 3. Exercise 5]{AM}, $C$ is not reduced. We prove that for any maximal ideal $\mathfrak{m}\subseteq A$ containing $\mathfrak{p}$, $k(\mathfrak{m})/k$ is not separable. It follows from \cite[Theorem 4.21]{CTP} that $k(\mathfrak{m}) \otimes_k \bar{k} \simeq k(\mathfrak{m}) \otimes_{A/ \mathfrak{p} }C\simeq k(\mathfrak{m}) \otimes_{k }C $. Since $k(\mathfrak{m}) $ is faithfully flat over $k$ and since $\sqrt{0_C}  \neq \{0\}$ by construction, we have $\sqrt{0_C} \otimes_k k(\mathfrak{m})\neq \{ 0\} $. Thus, since $ \sqrt{0_C} \otimes_k k(\mathfrak{m}) \subseteq \sqrt{0_{ k(\mathfrak{m}) \otimes_{k }C}}$, we infer that $k(\mathfrak{m}) \otimes_k \bar{k} $ is not reduced. Hence $k(\mathfrak{m})/k$ is not separable.

\end{proof}
The lemma presented below serves as the principal tool in proving our main Theorem 1. 
\begin{lemma}
\label{ss-1}
Let \( k \) be a field, \( n \in \mathbb{N} \), \( A = k[x_1, \dots , x_n] \), \( \mathfrak{p} \) a prime ideal of \( A \), \( s \geq 2 \), \( f \in A \), and \( I = \{1, \dots , r\} \). Assume \( k(\mathfrak{p}) \) is separably generated over \( k \), and \( \operatorname{char}(k) \) is either \( 0 \) or greater than \( s \). Choose \( g_1, \dots, g_r \in \mathfrak{p} \) and \( d_1, \dots, d_r \in \operatorname{Der}_k(A,A) \) as in Lemma \ref{fs}. Then, there exists \( a \notin \mathfrak{p} \) such that \( \mathfrak{p}_a = \langle \frac{g_i}{1} \rangle_{i \in I} \), and the following equivalence holds: \( f \in \mathfrak{p}_a^s \) if and only if \( f \in \mathfrak{p}_a^{s-1} \) and \( {d_i}_a(f) \in \mathfrak{p}_a^{s-1} \) for all \( i \in I \), where \( {d_i}_a \in \operatorname{Der}_k(A_a,A_a) \) as defined in Lemma \ref{loc} for all \( i \in I \). In particular, \( f \in \mathfrak{p}_\mathfrak{p}^s \) if and only if \( f \in \mathfrak{p}_\mathfrak{p}^{s-1} \) and \( {d_i}_\mathfrak{p}(f) \in \mathfrak{p}_\mathfrak{p}^{s-1} \) for all \( i \in I \), where \( {d_i}_\mathfrak{p} \in \operatorname{Der}_k(A_\mathfrak{p},A_\mathfrak{p}) \) as defined in Lemma \ref{loc} for all \( i \in I \).
\end{lemma}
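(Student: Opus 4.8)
The plan is to localize so as to land in the setting of a regular local ring equipped with a regular system of parameters, where the associated graded ring is a polynomial ring, and then to extract Taylor coefficients controlled by the derivations. The characteristic hypothesis will enter only to keep certain integer multiples invertible.

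First I would fix the data $g_1,\dots,g_r\in\mathfrak{p}$ and $d_1,\dots,d_r\in\operatorname{Der}_k(A,A)$ furnished by Theorem~\ref{fs}, set $M=(d_i(g_j))_{i,j}$ (so $\det M\notin\mathfrak{p}$), and choose $a\notin\mathfrak{p}$. Since $A$ is Noetherian and $\mathfrak{p}_\mathfrak{p}=\langle\frac{g_i}{1}\rangle$, I may take $a$ with $\mathfrak{p}_a=\langle\frac{g_i}{1}\rangle_{i\in I}$, and enlarging $a$ by a multiple I may also assume $\det M$ is invertible in $A_a$. Write $B=A_a$ and $\mathfrak{q}=\mathfrak{p}_a$, so that $B_\mathfrak{q}=A_\mathfrak{p}$ is a regular local ring with maximal ideal $\mathfrak{p}_\mathfrak{p}=\langle g_1,\dots,g_r\rangle$. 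Reducing $M$ modulo $\mathfrak{p}$ gives a nondegenerate pairing of the classes $\bar d_i$ with the classes $\bar g_j$ in $\mathfrak{p}_\mathfrak{p}/\mathfrak{p}^2_\mathfrak{p}$ (its matrix is $\bar M$, invertible over $k(\mathfrak{p})$), so $\bar g_1,\dots,\bar g_r$ form a $k(\mathfrak{p})$-basis of $\mathfrak{p}_\mathfrak{p}/\mathfrak{p}^2_\mathfrak{p}$; hence $(g_1,\dots,g_r)$ is a regular system of parameters and $\operatorname{gr}_{\mathfrak{p}_\mathfrak{p}}(A_\mathfrak{p})\cong k(\mathfrak{p})[T_1,\dots,T_r]$. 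I would then spread this out: for each $t\le s-1$ the $B/\mathfrak{q}$-linear surjection $\bigoplus_{|\beta|=t}B/\mathfrak{q}\twoheadrightarrow\mathfrak{q}^t/\mathfrak{q}^{t+1}$, $\beta\mapsto[g^\beta]$, becomes an isomorphism after $\otimes_{B/\mathfrak{q}}k(\mathfrak{p})$ by the polynomial structure above, so its kernel is a finitely generated torsion module over the domain $B/\mathfrak{q}$; shrinking $a$ once more (only finitely many $t$) I may assume these maps are injective, i.e.\ $\{[g^\beta]\}_{|\beta|=t}$ is $B/\mathfrak{q}$-linearly independent in $\mathfrak{q}^t/\mathfrak{q}^{t+1}$ for all $t\le s-1$.

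Next I would normalize the derivations: with $N=M^{-1}$ over $B$, set $D_k=\sum_i N_{ki}\,{d_i}_a\in\operatorname{Der}_k(B,B)$ (Lemma~\ref{loc}), so that $D_k(g_j)=\delta_{kj}$. Since each $D_k$ is a $B$-combination of the ${d_i}_a$, and conversely ${d_i}_a=\sum_k M_{ik}D_k$, and $\mathfrak{q}^{s-1}$ is an ideal, the conditions ``${d_i}_a(f)\in\mathfrak{q}^{s-1}$ for all $i$'' and ``$D_k(f)\in\mathfrak{q}^{s-1}$ for all $k$'' are equivalent, so I may work with the $D_k$. The forward implication is immediate from the Leibniz rule: if $f\in\mathfrak{q}^s$ then $f\in\mathfrak{q}^{s-1}$, and differentiating a product of $s$ elements of $\mathfrak{q}$ leaves at least $s-1$ of them in $\mathfrak{q}$, so ${d_i}_a(f)\in\mathfrak{q}^{s-1}$. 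For the converse, assume $f\in\mathfrak{q}^{s-1}$ and $D_k(f)\in\mathfrak{q}^{s-1}$ for all $k$, and write $f=\sum_{|\alpha|=s-1}c_\alpha g^\alpha$ with $c_\alpha\in B$ (here $g^\alpha:=g_1^{\alpha_1}\cdots g_r^{\alpha_r}$ and $e_k$ is the $k$-th standard multi-index). Using $D_k(g_j)=\delta_{kj}$ one gets
\[ D_k(f)=\sum_{|\alpha|=s-1}D_k(c_\alpha)\,g^\alpha+\sum_{|\alpha|=s-1}\alpha_k\,c_\alpha\,g^{\alpha-e_k}, \]
whose first sum lies in $\mathfrak{q}^{s-1}$, so $\sum_{|\alpha|=s-1}\alpha_k c_\alpha g^{\alpha-e_k}\in\mathfrak{q}^{s-1}$. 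Reducing modulo $\mathfrak{q}^{s-1}$ and invoking the linear independence of $\{[g^\beta]\}_{|\beta|=s-2}$ from the previous step, I read off $(\beta_k+1)\,c_{\beta+e_k}\in\mathfrak{q}$ for every $\beta$ with $|\beta|=s-2$. Since $\operatorname{char}(k)$ is $0$ or exceeds $s$ and $1\le\beta_k+1\le s-1$, the integer $\beta_k+1$ is nonzero in the domain $B/\mathfrak{q}$, whence $c_{\beta+e_k}\in\mathfrak{q}$. As every $\alpha$ with $|\alpha|=s-1$ has a positive entry, all $c_\alpha\in\mathfrak{q}$, so $f\in\mathfrak{q}\cdot\mathfrak{q}^{s-1}=\mathfrak{q}^s$. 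The ``in particular'' statement then follows by running the identical argument over the regular local ring $A_\mathfrak{p}$, where $B/\mathfrak{q}=k(\mathfrak{p})$ is a field and the required independence is automatic.

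\textbf{Main obstacle.} The step I expect to demand the most care is the graded input: identifying $\operatorname{gr}_{\mathfrak{p}_\mathfrak{p}}(A_\mathfrak{p})$ with a polynomial ring through the regular-system-of-parameters property — this is precisely where separable generation / formal smoothness (via Theorem~\ref{fs} and the invertibility of $\bar M$) is used — and then spreading the resulting linear independence from $A_\mathfrak{p}$ down to a principal localization $A_a$, which forces $a$ (and hence the statement) to depend on $s$. The characteristic hypothesis, though used only at the final coefficient extraction, is indispensable there: it guarantees that the multiples $\beta_k+1\le s-1$ produced by differentiation do not vanish in $B/\mathfrak{q}$, which is exactly what makes the coefficients $c_\alpha$ provably lie in $\mathfrak{q}$.
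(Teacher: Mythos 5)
Your proposal is correct, but it takes a genuinely different route from the paper's proof. The paper stays entirely elementary: it fixes the single localizing element $a=\det\bigl((d_i(g_j))_{i,j}\bigr)\,\delta$ (notably independent of $s$) and then runs an induction on the degree of the monomial expansion of $f$ in the $g_i$'s, at each stage differentiating, inverting the matrix $(d_i(g_j))$ modulo $\mathfrak{p}$ to force the coefficient combinations $\sum_{\underline{t}} t_l a_{\underline{t}} g^{\underline{t}-e_l}$ into $\mathfrak{p}^s$, and feeding these back into the inductive hypothesis; that induction is what plays the role of your linear-independence input. You instead normalize the derivations to $D_k(g_j)=\delta_{kj}$, identify $(g_1,\dots,g_r)$ as a regular system of parameters of the regular local ring $A_\mathfrak{p}$ so that $\operatorname{gr}_{\mathfrak{p}_\mathfrak{p}}(A_\mathfrak{p})\cong k(\mathfrak{p})[T_1,\dots,T_r]$, and spread the resulting independence of the monomials $[g^\beta]$ in $\mathfrak{q}^t/\mathfrak{q}^{t+1}$ from $A_\mathfrak{p}$ down to $A_a$; this lets you extract all coefficients in a single pass with no induction. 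The trade-offs are real: your argument is conceptually cleaner and makes the Taylor-coefficient picture explicit, but it imports the structure theorem for the associated graded ring of a regular local ring plus a spreading-out step, and your $a$ unavoidably depends on $s$ (which the lemma's statement permits, as you note); the paper's proof is self-contained and keeps $a$ independent of $s$, at the cost of heavier inductive bookkeeping, and it needs the integers up to $s$ invertible where you only need those up to $s-1$. Both proofs use the hypotheses at the same points: separable generation only through Theorem \ref{fs} (invertibility of $(d_i(g_j))$ modulo $\mathfrak{p}$), and the characteristic bound only to cancel the integer factors produced by differentiation. Your handling of the ``in particular'' clause, by rerunning the argument directly over $A_\mathfrak{p}$ rather than trying to deduce it from the $A_a$ statement, is the right move, since comparing $\mathfrak{p}_\mathfrak{p}^s\cap A_a$ with $\mathfrak{p}_a^s$ is not immediate.
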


\begin{proof}
Let \( k \) be a field, \( n \in \mathbb{N} \), \( A = k[x_1, \dots , x_n] \), \( \mathfrak{p} \) a prime ideal, \( s \in \mathbb{N} \), and \( f \in A \).

Clearly, \( f \in \mathfrak{p}^s \) implies \( f \in {\mathfrak{p}}^{s-1} \) and \( d_i(f) \in {\mathfrak{p}}^{s-1} \) for all \( i \in I \).

Now, we prove the converse. Since \( \mathfrak{p}A_\mathfrak{p} = \langle \frac{g_i}{1} \rangle_{i \in I} \) and \( \mathfrak{p} \) is finitely generated, we have \( \mathfrak{p} = \langle g_i, h_j \rangle_{(i,j) \in I\times J} \) for some finite set \( J \) and \( h_j \in A \) for all \( j \in J \).

Since \( \mathfrak{p}A_\mathfrak{p} = \langle \frac{g_i}{1} \rangle_{i \in I} \), there exists \( \delta \in A \) such that for all \( j \in J \), \( \frac{h_j}{1} = \sum_{i \in I} \frac{a_{i,j}}{\delta} \frac{g_i}{1} \), where \( a_{i,j} \in A \) for all \( (i,j) \in I \times J \). We set \( a = \det((d_i({g_j}))_{i,j \in I})\delta \), and hence \( \mathfrak{p}_a = \langle \frac{g_i}{1} \rangle_{i \in I} \).

To prove the result, it suffices to show that for any \( f = \sum_{\underline{t} \in K} a_{\underline{t}}  \prod_{j=1}^{k} \left( \frac{g_{j}}{1}\right)^{t_{j}} \) with \( K = \{ \underline{t}= (t_{1}, \cdots, t_{{k}}) \in \mathbb{N}^k \mid t_{1}+t_{2}+ \cdots+ t_{k}=s\} \), and \( d_r(f) \in \mathfrak{p}^s \) for all \( r \in \{ 1,2, \cdots, k\} \), then \( a_{\underline{t}} \in \mathfrak{p} \) for all \( \underline{t} \in K \). Here, \( a_{\underline{t}} \) and \( t_j \) are allowed to be zero. We prove this by induction on \( s \).

{\bf Base Case \( s = 1 \):}

Suppose \( f = \sum_{i=1}^l a_{i} g_{i} \) with \( l \leq k \) and \( d_r(f) = 0 \) for all \( r \in \{ 1, \cdots, k\} \).

For \( r \in \{ 1,2, \cdots, k\} \), we have:
\[ d_r(f) = \sum_{i=1}^l d_r(a_{i}) \left( \frac{g_{i}}{1}\right)+ \sum_{i=1}^l a_i d_r\left( \frac{g_{i}}{1}\right) \in \mathfrak{p}. \]

Thus, \( \sum_{i=1}^l a_i d_r\left( \frac{g_{i}}{1}\right) \in \mathfrak{p} \) for all \( r \in \{ 1, \cdots, k\} \). From this, we deduce:
\[ \left( \begin{array}{c} d_i\left( \frac{g_{j}}{1}\right) \end{array}\right)_{(i,j)} \left( \begin{array}{c} a_i \end{array}\right)_{i} =\left( \begin{array}{c} u_i \end{array}\right)_{i} \]
where \( u_i \in \mathfrak{p} \) for all \( i \in \{ 1, \cdots, k\} \). Since \( \left( \begin{array}{c} d_i\left( \frac{g_{j}}{1}\right)\end{array}\right)_{(i,j)} \) is invertible, we obtain \( a_i \in \mathfrak{p} \) for all \( i \in \{ 1, \cdots, k\} \). Thus, the base case holds.

{\bf Inductive Step:} Suppose the induction assumption is true for \( s-1 \geq 2 \). Consider \( f = \sum_{\underline{t} \in K} a_{\underline{t}}  \prod_{j=1}^{k} \left( \frac{g_{j}}{1}\right)^{t_{j}} \) with \( K = \{ \underline{t}= (t_{1}, \cdots, t_{{k}}) \in \mathbb{N}^k \mid t_{1}+t_{2}+ \cdots+ t_{k}=s\} \), and \( d_r(f) \in \mathfrak{p}^s \) for all \( r \in \{ 1,2, \cdots, k\} \).

We have:
\[ d_r(f) = \sum_{\underline{t} \in K} \left( d_r (a_{\underline{t}}) \prod_{j=1}^{k} \left( \frac{g_{j}}{1}\right)^{t_{j}} \right) + \sum_{l=1}^{k} \left( \sum_{\underline{t} \in K} t_{l} a_{\underline{t}}   \left( \frac{g_{l}}{1}\right)^{{t_{l}}-1}  \prod_{j\neq l} \left( \frac{g_{j}}{1}\right)^{{t_{j}}}   \right)d_{r} \left( \frac{g_{l}}{1}\right). \]

We deduce that \( \sum_{l=1}^{k} \left( \sum_{\underline{t} \in K} t_{l} a_{\underline{t}}   \left( \frac{g_{l}}{1}\right)^{{t_{l}}-1}  \prod_{j\neq l} \left( \frac{g_{j}}{1}\right)^{{t_{j}}}   \right)d_{r} \left( \frac{g_{l}}{1}\right)  \in \mathfrak{p}^s \).

Thus, we have:
\[ \left( \begin{array}{c} d_i(g_j)\end{array}\right)_{(i,j)} \left( \begin{array}{c}  \sum_{\underline{t} \in K} t_{l} a_{\underline{t}}   \left( \frac{g_{l}}{1}\right)^{{t_{l}}-1}  \prod_{j\neq l} \left( \frac{g_{j}}{1}\right)^{{t_{j}}}    \end{array}\right)_{i} =\left( \begin{array}{c} u_i \end{array}\right)_{i} \]
where \( u_i \in \mathfrak{p}^s \) for all \( i \in \{ 1, \cdots, k\} \).

Since \( \left( \begin{array}{c} d_i(g_j)\end{array}\right)_{(i,j)} \) is invertible, we obtain \( \sum_{\underline{t} \in K} t_{l} a_{\underline{t}}   \left( \frac{g_{l}}{1}\right)^{{t_{l}}-1}  \prod_{j\neq l} \left( \frac{g_{j}}{1}\right)^{{t_{j}}}  \in \mathfrak{p}^s \) for all \( l \in \{ 1, \cdots, k\} \).

Therefore, for each \( l \in \{ 1, \cdots, k\} \), we obtain:
\[ \sum_{\underline{t} \in K} t_{l} a_{\underline{t}}   \left( \frac{g_{l}}{1}\right)^{{t_{l}}-1}  \prod_{j\neq l} \left( \frac{g_{j}}{1}\right)^{{t_{j}}}    \in \mathfrak{p}^{s-1} \]
and
\[ d_r \left( \sum_{\underline{t} \in K} t_{l} a_{\underline{t}}   \left( \frac{g_{l}}{1}\right)^{{t_{l}}-1}  \prod_{j\neq l} \left( \frac{g_{j}}{1}\right)^{{t_{j}}} \right) \in \mathfrak{p}^{s-1} \text{ for all } r \in \{ 1, \cdots, k\}. \]

Applying the induction hypothesis, we obtain \( t_{l} a_{\underline{t}} \in \mathfrak{p} \) for all \( \underline{t} \in K \) and \( l \in \{ 1, \cdots, k\} \).
Fix $\underline{t}\in K$; since $\sum_{l=1}^k t_l = s$, then for some \( l \in \{ 1, \cdots, k\}\), we have that $t_l>0$ and clearly $t_l\leq s$. Given our assumption on the characteristic, this implies that \( a_{\underline{t}} \in \mathfrak{p} \). 
This concludes the induction.
\end{proof}

Based on the previous lemma, we establish that if a prime ideal is the intersection of certain maximal ideals, then locally, under the same assumptions as in the lemma, the power of this prime ideal is the intersection of the powers of those ideals.

\begin{cor} \label{int}
Let \( k \) be a field, \( B \) an algebra of finite type over \( k \), \( \mathfrak{p} \) a prime ideal of \( B \), and \( s \in \mathbb{N} \).
Assume that \( k(\mathfrak{p}) \) is separably generated over \( k \), and that \( \operatorname{char}(k) \) is either \( 0 \) or greater than \( s \).
Then there exists \( a \in A \) such that \( \mathfrak{p}_a^{s} = \bigcap_{\mathfrak{m} \in  \operatorname{Specm}_{\operatorname{sep}}(R) \cap V ( \mathfrak{p})} \mathfrak{m}_a^s \).
\end{cor}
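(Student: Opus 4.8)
The plan is to prove the two inclusions separately, working in the polynomial ring $A = k[x_1,\dots,x_n]$, which is the setting in which Theorem \ref{fs} and Lemma \ref{ss-1} are phrased (the finite-type hypothesis reduces to this essential case via a presentation, and the indexing set is $\operatorname{Specm}_{\operatorname{sep}}(A)\cap V(\mathfrak{p})$). I would first fix, once and for all, generators $g_1,\dots,g_r\in\mathfrak{p}$ and derivations $d_1,\dots,d_r\in\operatorname{Der}_k(A,A)$ as in Theorem \ref{fs}(2), together with the element $a\notin\mathfrak{p}$ produced by Lemma \ref{ss-1}. The crucial bookkeeping point is that this $a$ depends only on $g_i,d_i$ (it is built from $\det(d_i(g_j))$ and a denominator clearing the remaining generators of $\mathfrak{p}$) and \emph{not} on $s$; hence a single $a$ serves for every power $\le s$ simultaneously, which is what legitimizes the induction below. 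The inclusion $\mathfrak{p}_a^s\subseteq\bigcap_{\mathfrak{m}}\mathfrak{m}_a^s$ is then immediate, since $\mathfrak{p}\subseteq\mathfrak{m}$ forces $\mathfrak{p}^s\subseteq\mathfrak{m}^s$ for every $\mathfrak{m}\in\operatorname{Specm}_{\operatorname{sep}}(A)\cap V(\mathfrak{p})$.

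For the reverse inclusion I would induct on $s$. In the base case $s=1$ one cannot simply localize the identity $\mathfrak{p}=\bigcap_{\mathfrak{m}}\mathfrak{m}$ of Theorem \ref{fs}(6), because localization does not commute with infinite intersections; instead I argue topologically. Given $\xi=f/a^n\in\bigcap_{\mathfrak{m}}\mathfrak{m}_a$, contracting along $A\to A_a$ (and noting that each $\mathfrak{m}$ with $a\in\mathfrak{m}$ gives $\mathfrak{m}_a=A_a$ and so contributes nothing) yields $f\in\mathfrak{m}$ for every $\mathfrak{m}\in\operatorname{Specm}_{\operatorname{sep}}(A)\cap V(\mathfrak{p})\cap D(a)$. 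By Theorem \ref{fs}(5) this set is dense in $V(\mathfrak{p})\cap D(a)$, which in turn is dense in the irreducible closed set $V(\mathfrak{p})$ (as $D(a)$ is open and meets $V(\mathfrak{p})$); since $V(f)$ is closed and contains a dense subset of $V(\mathfrak{p})$, it contains $V(\mathfrak{p})$, whence $f\in\sqrt{\mathfrak{p}}=\mathfrak{p}$ and $\xi\in\mathfrak{p}_a$.

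For the inductive step ($s\ge 2$, assuming the statement for $s-1$, which is available because $\operatorname{char}(k)>s\ge s-1$), I take $f\in\bigcap_{\mathfrak{m}}\mathfrak{m}_a^s$ and verify the two conditions that Lemma \ref{ss-1} requires for membership in $\mathfrak{p}_a^s$. Since $\mathfrak{m}_a^s\subseteq\mathfrak{m}_a^{s-1}$, the inductive hypothesis gives $f\in\bigcap_{\mathfrak{m}}\mathfrak{m}_a^{s-1}=\mathfrak{p}_a^{s-1}$. For the derivatives, I use the elementary monotonicity $d(\mathfrak{m}^s)\subseteq\mathfrak{m}^{s-1}$, valid for any derivation $d\colon A\to A$ (it follows from the Leibniz rule applied to products of $s$ elements of $\mathfrak{m}$), together with its localized form from Lemma \ref{loc}: this yields $(d_i)_a(f)\in\mathfrak{m}_a^{s-1}$ for every $\mathfrak{m}$ and every $i$, hence $(d_i)_a(f)\in\bigcap_{\mathfrak{m}}\mathfrak{m}_a^{s-1}=\mathfrak{p}_a^{s-1}$ again by induction. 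Lemma \ref{ss-1} then concludes $f\in\mathfrak{p}_a^s$, completing the induction.

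I expect the conceptual heart --- and the step most worth spelling out --- to be this transfer of derivative information in the inductive step: Lemma \ref{ss-1} characterizes $\mathfrak{p}_a^s$ through derivations attached to $\mathfrak{p}$, whereas the hypothesis only controls $f$ through the maximal ideals $\mathfrak{m}$, and the bridge is precisely the power-lowering property $d(\mathfrak{m}^s)\subseteq\mathfrak{m}^{s-1}$, which lets the derivative conditions be checked one $\mathfrak{m}$ at a time and then reassembled over $\bigcap_{\mathfrak{m}}\mathfrak{m}_a^{s-1}=\mathfrak{p}_a^{s-1}$. The only other delicate point is the failure of localization to commute with the infinite intersection in the base case, which is why the density statement of Theorem \ref{fs}(5) (ultimately Proposition \ref{seplemma}) is indispensable here rather than a mere restatement of Theorem \ref{fs}(6).
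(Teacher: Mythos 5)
Your proposal is correct and follows essentially the same route as the paper: induction on $s$, with the base case coming from Theorem \ref{fs}, and the inductive step combining Lemma \ref{ss-1} at $\mathfrak{p}$, the inductive hypothesis at level $s-1$, and the Leibniz power-lowering property $d(\mathfrak{m}^s)\subseteq\mathfrak{m}^{s-1}$ --- the paper packages this as a chain of four equivalent statements, while you split it into two inclusions, but it is the same argument. The only differences are organizational: you treat the base case more carefully (the density argument showing $\bigcap_{\mathfrak{m}}\mathfrak{m}_a\subseteq\mathfrak{p}_a$, which the paper asserts without comment), and you defer the routine reduction from a finite-type algebra to the polynomial ring, which the paper carries out explicitly via the ideal correspondence.
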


\begin{proof}
By Lemma \ref{fs}, we know that $\mathfrak{p} = \bigcap_{\mathfrak{m} \in \operatorname{Specm}_{\operatorname{sep}}(R) \cap V(\mathfrak{p})} \mathfrak{m}$. Moreover, $\operatorname{Specm}_{\operatorname{sep}}(R)$ ensures that $k(\mathfrak{m})/k$ is separable for all $\mathfrak{m} \in \operatorname{Specm}_{\operatorname{sep}}(R) \cap V(\mathfrak{p})$.

We begin by proving the result for the case where $B = k[x_1, \dots, x_n]$, a polynomial ring using induction on $s$. For $s=1$, the result follows directly from $\mathfrak{p} = \bigcap_{\mathfrak{m} \in \operatorname{Specm}_{\operatorname{sep}}(R) \cap V(\mathfrak{p})} \mathfrak{m}$. By Lemma \ref{fs}, we can choose $r \in \mathbb{N}$, $k$-derivations $d_i \in \operatorname{Der}_k(B, B)$, and $g_i \in B$ for all $i \in \{1, \dots, r\}$ such that $\mathfrak{p}B_\mathfrak{p} = \langle \frac{g_1}{1}, \dots , \frac{g_r}{1} \rangle$, and $\det((d_i(g_j))_{1 \leq i,j \leq r}) \notin \mathfrak{p}$. Since $\mathfrak{p}B_\mathfrak{p} = \langle \frac{g_i}{1} \rangle$, there exists $\delta \in A$ such that for all $j \in \{1, \dots, r\}$, $\frac{h_j}{1} = \sum_{i=1}^r \frac{a_{i,j}}{\delta} \frac{g_i}{1}$, where $a_{i,j} \in B$. We set $a = \det((d_i(g_j))_{1 \leq i,j \leq r})\delta$ as in the proof of Lemma \ref{ss-1}. We assume that $\mathfrak{p}_a^{s-1} = \bigcap_{\mathfrak{m} \in \operatorname{Specm}_{\operatorname{sep}}(R) \cap V(\mathfrak{p})} \mathfrak{m}_a^{s-1}$, for some $s \geq 2$. We will prove that $\mathfrak{p}_a^{s} = \bigcap_{\mathfrak{m} \in \operatorname{Specm}_{\operatorname{sep}}(R) \cap V(\mathfrak{p})} \mathfrak{m}_a^{s}$.

We claim that the following statements are equivalent:
\begin{enumerate}
    \item $f \in \mathfrak{p}_a^{s}$;
    \item $f \in \mathfrak{p}_a^{s-1}$ and ${d_i}_a (f) \in {\mathfrak{p}_a}^{s-1}$ for all $i \in \{1, \dots, r\}$;
    \item $f \in \mathfrak{m}_a^{s-1}$ and ${d_i}_a (f) \in {\mathfrak{m}_a}^{s-1}$ for all $i \in \{1, \dots, r\}$ and for all $\mathfrak{m} \in \operatorname{Specm}_{\operatorname{sep}}(R) \cap V(\mathfrak{p})$;
    \item $f \in \mathfrak{m}_a^{s}$ for all $\mathfrak{m} \in \operatorname{Specm}_{\operatorname{sep}}(R) \cap V(\mathfrak{p})$.
\end{enumerate}
Indeed, by Lemma \ref{ss-1}, we infer that $(1) \Leftrightarrow (2)$ and $(3) \Leftrightarrow (4)$, while by the inductive assumption $(2) \Leftrightarrow (3)$. The desired statement follows from the induction hypothesis and those equivalences.

Now we prove the result in the general case for $B$ as a finitely presented algebra over $k$. Since $B \cong k[x_1, \dots, x_n] / \mathfrak{a}$ for some ideal $\mathfrak{a} \subseteq k[x_1, \dots, x_n]$, we assume without loss of generality that $B = k[x_1, \dots, x_n] / \mathfrak{a}$. By the ideal correspondence, for a prime ideal $\mathfrak{p} \subseteq B$, there exists a prime ideal $\mathfrak{P} \subseteq k[x_1, \dots, x_n]$ containing $\mathfrak{a}$ such that $\mathfrak{p} = \mathfrak{P} / \mathfrak{a}$. Moreover, we have $\operatorname{Specm}_{\operatorname{sep}}(B) \cap V(\mathfrak{p}) = \{\mathfrak{M} / \mathfrak{a} \mid \mathfrak{M} \in \operatorname{Specm}_{\operatorname{sep}}(k[x_1, \dots, x_n]) \cap V(\mathfrak{P})\}$. Now $\mathfrak{p}_a = \mathfrak{P}_a / \mathfrak{a}_a$, hence $\mathfrak{p}_a = \bigcap_{\mathfrak{M} \in \operatorname{Specm}_{\operatorname{sep}}(k[x_1, \dots, x_n]) \cap V(\mathfrak{P})} (\mathfrak{M}_a / \mathfrak{a}_a)$. Therefore,
\[
\mathfrak{p}_a^s = \left(\mathfrak{P}_a / \mathfrak{a}_a\right)^s = \left(\mathfrak{P}_a^s + \mathfrak{a}_a\right) / \mathfrak{a}_a = \frac{\left(\bigcap_{\mathfrak{M} \in \operatorname{Specm}_{\operatorname{sep}}(k[x_1, \dots, x_n]) \cap V(\mathfrak{P})} \mathfrak{M}_a^s\right) + \mathfrak{a}_a}{\mathfrak{a}_a}.
\]
This simplifies to $\mathfrak{p}_a^s = \bigcap_{\mathfrak{m} \in \operatorname{Specm}_{\operatorname{sep}}(B) \cap V(\mathfrak{p})} \mathfrak{m}_a^s$, concluding the proof.
\end{proof}

From the preceding result, we conclude that the ramification group of a given order at a prime ideal $\mathfrak{p}$ is the intersection of the ramification groups of the same order at the maximal ideals $\mathfrak{m}$ containing $\mathfrak{p}$.

\begin{cor}\label{specmsep}
Let $B$ be a finitely generated algebra over a field $k$ equipped with a group action of a group $G$, $\mathfrak{p}$ a prime ideal of $B$, and $s \in \mathbb{N}$. Assume $k(\mathfrak{p})$ is separably generated over $k$, and $\operatorname{char}(k)$ is either $0$ or greater than $s$. Then, the ramification group of order $s$ at $\mathfrak{p}$ is given by
\[
G_s(\mathfrak{p}) = \bigcap_{\mathfrak{m} \in \operatorname{Specm}_{\operatorname{sep}}(B) \cap V(\mathfrak{p})} G_s(\mathfrak{m}).
\]
\end{cor}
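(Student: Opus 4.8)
The plan is to unwind the definition of the ramification group and reduce the asserted equality of subgroups of $G$ to a single ideal-membership equivalence, which is then supplied by Corollary \ref{int}. Recall that $\sigma\in G_s(\mathfrak p)$ means $\sigma(\mathfrak p)=\mathfrak p$ together with $\sigma(b)-b\in\mathfrak p^{s+1}B_\mathfrak p$ for every $b\in B$ (this is what it means for $\sigma$ to act as the identity on $N_{s+1}(\mathfrak p)=B_\mathfrak p/\mathfrak p^{s+1}B_\mathfrak p$). First I would dispose of the stabilizer conditions. If $\sigma\in G_s(\mathfrak p)$, then $\sigma(b)-b\in\mathfrak p^{s+1}B_\mathfrak p\subseteq\mathfrak pB_\mathfrak p$, so $\sigma(b)-b\in\mathfrak pB_\mathfrak p\cap B=\mathfrak p$ for all $b$; hence $\sigma$ fixes every $\mathfrak m\in V(\mathfrak p)$ setwise. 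Conversely, if $\sigma\in\bigcap_\mathfrak m G_s(\mathfrak m)$ then $\sigma$ fixes each separable maximal $\mathfrak m\supseteq\mathfrak p$, and since $\mathfrak p=\bigcap_\mathfrak m\mathfrak m$ by Theorem \ref{fs}, it follows that $\sigma(\mathfrak p)=\mathfrak p$. Thus on both sides $\sigma$ may be assumed to stabilize $\mathfrak p$ and all the relevant $\mathfrak m$, and the statement reduces to proving, for $c:=\sigma(b)-b\in B$,
\[
c\in\mathfrak p^{s+1}B_\mathfrak p\iff c\in\mathfrak m^{s+1}B_\mathfrak m\text{ for every }\mathfrak m\in\operatorname{Specm}_{\operatorname{sep}}(B)\cap V(\mathfrak p).
\]

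I would then invoke Corollary \ref{int} for the $(s+1)$-st power: there is $a\notin\mathfrak p$ with $\mathfrak p_a^{s+1}=\bigcap_\mathfrak m\mathfrak m_a^{s+1}$ in $B_a$, where the maximal ideals containing $a$ drop out. The elementary fact I would use throughout is that for a maximal ideal $\mathfrak m$ with $a\notin\mathfrak m$ the power $\mathfrak m_a^{s+1}$ is $\mathfrak m_a$-primary, hence contracted from its localization, so $\mathfrak m^{s+1}B_\mathfrak m\cap B_a=\mathfrak m_a^{s+1}$; in particular, for $c\in B_a$ one has $c\in\mathfrak m^{s+1}B_\mathfrak m$ if and only if $c\in\mathfrak m_a^{s+1}$. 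The implication ``$\Leftarrow$'' is then immediate: if $c$ lies in $\mathfrak m^{s+1}B_\mathfrak m$ for all $\mathfrak m$, then $c\in\mathfrak m_a^{s+1}$ for every $\mathfrak m$ with $a\notin\mathfrak m$, whence $c\in\bigcap_\mathfrak m\mathfrak m_a^{s+1}=\mathfrak p_a^{s+1}\subseteq\mathfrak p^{s+1}B_\mathfrak p$.

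The direction ``$\Rightarrow$'' is where the real work lies, and I expect it to be the main obstacle. Fixing a separable maximal $\mathfrak m\supseteq\mathfrak p$, I must show $\mathfrak p^{s+1}B_\mathfrak p\cap B\subseteq\mathfrak m^{s+1}$, and for this I need the power $\mathfrak p_a^{s+1}$ to be \emph{saturated}, i.e. $\mathfrak p^{s+1}B_\mathfrak p\cap B_a=\mathfrak p_a^{s+1}$, equivalently that $\mathfrak p_a^{s+1}$ has no embedded primes. This is exactly the place where formal smoothness is indispensable: by the Jacobian criterion of Theorem \ref{fs} the generators $g_1,\dots,g_r$ of $\mathfrak p_a$ form a regular sequence once $\det(d_i(g_j))$ is inverted, so $\operatorname{gr}_{\mathfrak p_a}(B_a)$ is a polynomial ring over the domain $B_a/\mathfrak p_a$, each graded piece $\mathfrak p_a^{t}/\mathfrak p_a^{t+1}$ is free over $B_a/\mathfrak p_a$, and hence $\operatorname{Ass}(B_a/\mathfrak p_a^{s+1})=\{\mathfrak p_a\}$ and $\mathfrak p_a^{s+1}$ is $\mathfrak p_a$-primary. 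Granting this, $c\in\mathfrak p^{s+1}B_\mathfrak p\cap B$ forces $c\in\mathfrak p_a^{s+1}\subseteq\mathfrak m_a^{s+1}$, and intersecting with $B$ gives $c\in\mathfrak m^{s+1}$.

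One genuine subtlety remains, which I would address carefully: the element $a$ furnished by Corollary \ref{int} may lie in the chosen $\mathfrak m$, in which case $\mathfrak m$ does not survive in $\operatorname{Spec}(B_a)$. Since the inclusion $\mathfrak p^{s+1}B_\mathfrak p\cap B\subseteq\mathfrak m^{s+1}$ to be proved is intrinsic, I am free to re-run the argument for each fixed $\mathfrak m$ with data adapted to it. Because $\mathfrak m$ is itself a separable, hence formally smooth, point of $V(\mathfrak p)$, Theorem \ref{fs} can be applied so as to choose generators $g_1,\dots,g_r$ of $\mathfrak pA_\mathfrak p$ with $\det(d_i(g_j))\notin\mathfrak m$, i.e. with $a\notin\mathfrak m$; producing such a simultaneously valid choice (generating $\mathfrak pA_\mathfrak p$ while keeping the Jacobian a unit at $\mathfrak m$) is the technical crux. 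Once it is secured, chaining the two implications yields $G_s(\mathfrak p)=\bigcap_{\mathfrak m}G_s(\mathfrak m)$.
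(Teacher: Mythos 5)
The paper never writes a proof of Corollary \ref{specmsep}: it is asserted as an immediate consequence of Corollary \ref{int}, so your proposal is rightly read as an attempt to supply that missing deduction, and your handling of the stabilizer conditions and of the direction ``$\Leftarrow$'' is essentially the intended argument. One bookkeeping caveat before the main point: you invoke Corollary \ref{int} at exponent $s+1$, whose stated hypothesis would then be $\operatorname{char}(k)=0$ or $>s+1$, while only $\operatorname{char}(k)=0$ or $>s$ is assumed; this off-by-one traces back to the paper's own conflict between the notation index (which defines $G_s$ via $N_{s+1}$) and Definition \ref{ramification} (which uses $N_s$), but whichever convention you adopt, the exponent in \ref{int} and the characteristic bound must be aligned explicitly.

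The genuine gap is in ``$\Rightarrow$'', and it is twofold. First, the element-wise statement you reduce to is false in the stated generality, which allows singular $B$: take $B=k[x,y,z]/(xy-z^2)$ in characteristic $0$, $\mathfrak p=(x,z)$, $\mathfrak m_0=(x,y,z)$, $s=1$, $c=x$. Then $k(\mathfrak p)=k(y)$ is separably generated, and $x=z^2\cdot y^{-1}\in\mathfrak p^{2}B_{\mathfrak p}$ since $y\notin\mathfrak p$; but $x\notin\mathfrak m_0^{2}$ (it has degree one in the graded ring $B$), and $\mathfrak m_0^{2}$ is $\mathfrak m_0$-primary, so $x\notin\mathfrak m_0^{2}B_{\mathfrak m_0}$. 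So $c\in\mathfrak p^{s+1}B_\mathfrak p$ does not imply $c\in\mathfrak m^{s+1}B_\mathfrak m$; your reduction discards exactly the constraint that the relevant $c$ are differences $\sigma(b)-b$ of a ring automorphism (for this cone that multiplicative constraint does force the desired conclusion, so it is the reduction, not necessarily the corollary, that breaks). Second, even over a polynomial ring, where the element-wise claim is true, the step you yourself call ``the technical crux'' is not merely unfinished but impossible whenever $V(\mathfrak p)$ is singular at $\mathfrak m$: separability of $k(\mathfrak m)/k$ makes $\mathfrak m$ a smooth point of $\operatorname{Spec}(A)$, not of $V(\mathfrak p)$. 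Concretely, for $\mathfrak p=(y^{2}-x^{3}-x^{2})\subseteq k[x,y]$ and $\mathfrak m=(x,y)$, every $g'\in A$ generating $\mathfrak pA_\mathfrak p$ equals the cubic times an element outside $\mathfrak p$, and since both partial derivatives of the cubic and the cubic itself lie in $\mathfrak m$, every entry $d_i(g'_j)$ lies in $\mathfrak m$, so $\det(d_i(g'_j))\in\mathfrak m$ for any choice; no localization $a\notin\mathfrak m$ of the required form exists. The repair that stays within the paper's toolkit avoids saturation and the choice of a single $a$ altogether: from $c\in\mathfrak p^{s+1}A_\mathfrak p$, Lemma \ref{ss-1} at $\mathfrak p$ together with the Leibniz rule gives that all iterated partial derivatives of $c$ of order at most $s$ lie in $\mathfrak p\subseteq\mathfrak m$; then apply Lemma \ref{ss-1} in the reverse direction at $\mathfrak m$, which is legitimate because $\mathfrak m$ is a separable maximal ideal of the polynomial ring and hence admits adapted generators with unit Jacobian, regardless of how singular $V(\mathfrak p)$ is at $\mathfrak m$. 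That settles the polynomial-ring case; the case of a general finitely generated $B$ still requires an argument exploiting the automorphism structure, which your proposal does not supply.
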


\section{The construction of a jet algebra}

In this section, we 
present the notion of jet algebras, which will be involved in our subsequent discussions. We describe some classical properties of such objects starting with the definition of the $m^{th}$ jet functor of an algebra.
\begin{definition}\cite[\S 2]{EM} \label{jet}
Let $m\in \mathbb{N}$ and $B$ be an $R$-algebra. We define the \textsf{$m^{th}$ jet functor of $B$}, denoted $\underline{J_m(B)}$, as a covariant functor from $\mathbf{Alg}_R$ to $\mathbf{Set}$ that assigns to each $R$-algebra $C$ the set $\operatorname{Hom}_{\mathbf{Alg}_R}( B ,C [t]/ (t^{m+1}) )$, and to each morphism of $R$-algebras $\varphi$ the corresponding map $\rho_{t,m}(\varphi)_\ast$ where $\rho_{t,m}(\varphi) :  C [t]/ (t^{m+1}) \rightarrow D [t]/ (t^{m+1})$ is the $R$-algebra morphism sending $\sum_{j=0}^m c_{j} [t]_m^j$ to $\sum_{j=0}^m \varphi (c_{j}) [t]_m^j$ and $ \rho_{t,m}(\varphi)_\ast= \rho_{t,m}(\varphi)\circ-$. 
When $\underline{J_m(B)}$ is corepresentable in the category of $R$-algebras, we call a \textsf{$m^{th}$ jet algebra of $B$} a $R$-algebra representing the functor $\underline{J_m(B)}$. 
\end{definition}

We introduce a Definition-Lemma that facilitates the computation of the jet \( R \)-algebra for any algebra of finite type (also discussed in \cite[\S 2]{EM}). Our goal is to provide a comprehensive construction of the natural transformations associated with the computed jet algebra representative, while also establishing our notational conventions. The proof follows straightforwardly from the precise formulation of the statement.

\begin{deflem} \cite[\S 2]{EM} \label{defft}
Let $m\in \mathbb{N}$, $n \in \mathbb{N}^*$, and  $B = R[\xn]/ I$ for some ideal $I$ of $R[\xn]$.
\begin{itemize} 
\item When $m=0$, we set $\mathcal{L}_0 (B) := B$. When $m >0$, the $m^{th}$ jet functor of $B$, $\underline{J_m(B)}$, is corepresented by the finite type $R$-algebra:
\[
\mathcal{L}_m (B)= B[\underline{x}_{n,m}^1 ]/ \mathcal{L}_m(I)
\]
where $\mathcal{L}_m(I)$ is the ideal of $B[\underline{x}_{n,m}^1 ]$ defined as:
\[
\mathcal{L}_m(I) = \left< g^m_{h,j}|  h\in I\right>,
\]
where, for all $h\in R[\xn]$, the elements $g_{h,1}^m,\dots, g_{h,m}^m\in B[\underline{x}_{n,m}^1]$ are uniquely defined by the following equation:
\[
h\left( \left(  [x_{i}]_I+ \sum_{j=1}^m  x_{i,j} [t]_{m}^j \right)_{\! i\in \nn{n}} \right) = \sum_{j=0}^m g^m_{h, j} [t]_{m}^j.
\]
In the remainder of the paper, we set $x_{i,0}:= [x_i]_I$, for all $(i,j)\in \nn{n} \times \nn{m}$. 

We will refer to the $R$-algebra $\mathcal{L}_m (B)$ as the \textsf{$m^{th}$ standard jet algebra of $B$ with respect to $I$} and the ideal $\mathcal{L}_m(I)$ as the \textsf{$m^{th}$ standard jet ideal of $I$}. 

\item Two natural correspondences showing the representability of the jet algebra functor via the jet algebra can be defined as follows. First, there is a natural correspondence $\eta_m^B: \underline{J_m(B)}\rightarrow \operatorname{Hom}_{ \mathbf{Alg}_R}( \mathcal{L}_m(B) , - )$ given as the family $\left( {\eta_m^B}_C\right)_{C \in\mathbf{Alg}_R}$ such that:
\begin{align*} {\eta_m^B}_C:    \operatorname{Hom}_{ \mathbf{Alg}_R}( \mathcal{L}_m(B) , C ) & \rightarrow  \operatorname{Hom}_{ \mathbf{Alg}_R}( B , C [t]/ (t^{m+1})) \\
 \psi & \mapsto   {\eta_m^B}_C(\psi),
\end{align*}
where $C$ is an $R$-algebra and ${\eta_m^B}_C(\psi) : B  \rightarrow C [t]/ (t^{m+1})$ sends $[x_i]_{I}$ to \\
$\rho_{t, m}(\psi) (\sum_{j=0}^m  [x_{i,j}]_{\mathcal{L}_m(I)} [t]_m^j)=  \sum_{j=0}^m \psi ( [x_{i,j}]_{\mathcal{L}_m(I)}) [t]_m^j$, for all $i \in \nn{n} $.

On the other hand, there exists a natural correspondence $\epsilon_m^B:  \operatorname{Hom}_{ \mathbf{Alg}_R}( \mathcal{L}_m(B) , - )\rightarrow  \underline{J_m(B)}$ defined as the family $\left( {\epsilon_m^B}_C\right)_{C \in\mathbf{Alg}_R}$ such that
\begin{align*} {\epsilon_m^B}_C:    \operatorname{Hom}_{ \mathbf{Alg}_R}( B , C [t]/ (t^{m+1})) & \rightarrow    \operatorname{Hom}_{ \mathbf{Alg}_R}( \mathcal{L}_m(B) , C ) \\
 \phi & \mapsto    {\epsilon_m^B}_C( \phi) 
\end{align*}
where $C$ is an $R$-algebra and $ {\epsilon_m^B}_C(\phi) : \mathcal{L}_m(B) \rightarrow C $ sends $[x_{i,j}]_{\mathcal{L}_m(I)}$ to ${\epsilon_m^B}_C(\phi)([x_{i,j}]_{\mathcal{L}_m(I)})$ determined uniquely by the equality $\phi ( [x_i]_I) = \sum_{j=0}^m {\epsilon_m^B}_C(\phi)([x_{i,j}]_{\mathcal{L}_m(I)}) [t]_m^j$, for all $i \in \nn{n} $. Moreover, $\eta_m^B$ and $\epsilon_m^B$ are inverses of each other.
\end{itemize}
\end{deflem}


\begin{remark} \label{quotient1}
From the constructions of the jet structure, the following lemma is straightforward to derive the following: Let $n\in \mathbb{N}^*$, $m\in \mathbb{N}$ and $B = R[\xn]/ I$ for some ideal $I$ of $R[\xn]$. For all $C$ $B$-algebras, we have 
\[  \mathcal{L}_m( B) \otimes_{B} C\simeq \frac{C[ \underline{x}_{n,m}^1]}{\pi_C ( \mathcal{L}_m( I))}\]
where $\pi_C $ is the canonical map from $B[  \underline{x}_{n,m}^1]$ to $C[  \underline{x}_{n,m}^1]$ induced by the structural morphism from $B$ to $C$ turning $C$ into a $B$-algebra. 
\end{remark}
We omit the proofs of the next two Definition-Lemmas as they can be easily verified by straightforward computations explicitly suggested in the statements and pertain to well-known properties of jet algebras. The following Definition-Lemma explicitly describes some maps associated with the jet algebra.

\begin{deflem} \cite[\S 2]{EM} \label{defftt}
Let $n \in \mathbb{N}^*$, $m\in \mathbb{N}$, and $s\in \mathbb{N}$ with $s\geq m$ and $B \colon= R[\xn]/ I$ for some ideal $I$ of $R[\xn]$.
\begin{enumerate}  
\item We define the \textsf{$(m,s)$-jet extension of $B$} to be the $R$-algebra morphism
\[
\mu^B_{m,s}  := {\epsilon_m^B}_{\mathcal{L}_s(B)} ( \tau_{m, s}^{\mathcal{L}_s(B)} \circ {\eta_s^B}_{\! \! \mathcal{L}_s(B)} ( \operatorname{id}_{\mathcal{L}_s(B)})).
\]
In other words,
$$\begin{array}{llll}
\mu^B_{m,s} \colon & \mathcal{L}_m (B) & \rightarrow  &\mathcal{L}_s(B) \\
& [x_{i,j}]_{\mathcal{L}_m (I)} &\mapsto  & [x_{i,j}]_{\mathcal{L}_s (I)}  .
\end{array}$$
We note that ${\mu^{B}_{m,s}}^\ast\circ {\epsilon_{s}^B}_C={\epsilon_{m}^B}_C\circ {\tau_{m,s}^C}_{\ast}$ where $C$ is an $R$-algebra.

\item 
We define the \textsf{$m$-jet section to $B$} to be the map  $\sigma_m^B \colon = {\epsilon_{m}^B}_B(\iota^B_m) \colon \mathcal{L}_m(B)\rightarrow B$ where $\iota_m^B:  B \rightarrow B [t]/ (t^{m+1})$ is the canonical inclusion. We note that $\sigma_m^B\circ \mu^B_{0,m}=\operatorname{id}_B$. 

\item We introduce the following canonical $R$-algebra morphism $\upsilon^B_{m,s}:={\eta_m^B}_{\mathcal{L}_s(B)} ( \mu_{m,s}^B) $:
$$\begin{array}{llll}
\upsilon_{m,s}^B:&  B & \rightarrow & \mathcal{L}_s(B)[t]/ (t^{m+1})\\ 
 & [x_i]_{I} & \mapsto &  \sum_{j=0}^{m} [x_{i,j}]_{\mathcal{L}_s(I)} [t]_m^j.
\end{array}$$
\end{enumerate}
\end{deflem}
We will now construct a functorial mapping between morphisms of algebras and morphisms between their jet algebras:
\begin{deflem} \label{dtt} 
Let $m \in \mathbb{N}$, $n_1, n_2 \in \mathbb{N}^*$, $B = R[\underline{x}_{n_1}]/I_1$ for some ideal $I_1$ of $R[\underline{x}_{n_1}]$ and $C = R[\underline{y}_{n_2}]/I_2$ for some ideal $I_2$ of $R[\underline{x}_{n_2}]$. Consider a morphism of $R$-algebras $\phi\colon C\rightarrow B$. We define the morphism of $R$-algebras $\mathcal{L}_m(\phi)\colon \mathcal{L}_m(C) \rightarrow \mathcal{L}_m(B)$ defined by
\[
\mathcal{L}_m(\phi) = \epsilon_{m,\mathcal{L}_m(B)}^C (  \eta^B_{m,\mathcal{L}_m(B)} (\operatorname{id}_{\mathcal{L}_m(B)})\circ \phi).
\]
The following diagram of natural transformations is commutative: 
$$
\xymatrix{ \underline{J_m(C)} \ar@/^1pc/[rr]^-{\epsilon_{m}^C} \ar[dd]_{\phi^\ast}&& \ar@/^1pc/[ll]^-{\eta_{m}^C}  \operatorname{Hom}_{ \mathbf{Alg}_R}( \mathcal{L}_m(C) , - )\ar[dd]^-{\mathcal{L}_m(\phi)^\ast} \\ \\
\underline{J_m(B)} \ar@/^1pc/[rr]^-{\epsilon_{m}^B} && \ar@/^1pc/[ll]^-{\eta_{m}^B} \operatorname{Hom}_{ \mathbf{Alg}_R}( \mathcal{L}_m(B) , - )}.
$$ 
This morphism can be explicitly expressed as follows: for all $i \in \on{n_2}$, let $[p_i]_{I_1} := \phi([y_i]_{I_2})$, so that $p_i((\sum_{j=0}^m [x_{i, j}]_{\mathcal{L}_m (I_1)} [t]_m^j)_{i\in I_1}) = \sum_{j=0}^s [q^m_{i,j}]_{\mathcal{L}_m (I_1)}[t]_m^j$, for some $q_{i,j}^m\in R[\underline{y}_{n_2}]$, with $(i,j)\in \on{n_1} \times \on{n_2}$. Then,
$$\begin{array}{llll}
\mathcal{L}_m(\phi):  & \mathcal{L}_m(C) & \rightarrow & \mathcal{L}_m(B) \\
 &[y_{i,j}]_{\mathcal{L}_m(I_2)} & \mapsto  & [q^m_{i,j}]_{\mathcal{L}_m(I_1)}.
\end{array}$$
Moreover, $  \mathcal{L}_s(\phi)\circ\mu^C_{m,s}  = \mu^B_{m,s} \circ \mathcal{L}_m(\phi) $ where $s\geq m$ and $\mathcal{L}_m(\operatorname{id}_B) = \operatorname{id}_{\mathcal{L}_m(B)}$. Furthermore, for any morphism $\varphi: D \rightarrow C$, we have $\mathcal{L}_m(\phi \circ \varphi) = \mathcal{L}_m(\phi) \circ \mathcal{L}_m(\varphi)$.
\end{deflem}

We quickly revisit the construction of a Jet algebra for general algebra here. By \cite[Lemma 10.127.2]{Stack}, there exists a directed system $B_\lambda$, $\lambda \in \Lambda$, of $R$-algebras of finite presentation together with $R$-algebra morphism $\pi_{\lambda_1, \lambda_2}: B_{\lambda_1} \rightarrow B_{\lambda_2}$ giving the family $( B_\lambda )_{\lambda \in \Lambda}$ its structure of directed system, such that $B = \varinjlim B_{\lambda}$. Then $\underline{J_m(B)}$ is corepresented by $\varinjlim \mathcal{L}_m(B_\lambda)$, where the maps $\mathcal{L}_m(\pi_{\lambda_1, \lambda_2}) \colon \mathcal{L}_m(B_{\lambda_1})\rightarrow \mathcal{L}_m(B_{\lambda_2})$ give the direct system structure. This follows directly from the universal property defining direct limits and the fact that direct limits commute with the $\operatorname{Hom}$ functor.
 Hence, we can define, up to unique isomorphism, a $m$-th jet algebra of an arbitrary $R$-algebra:

\begin{definition}\label{lm}
Let $m \in \mathbb{N}$, $B$ an arbitrary $R$-algebra and let $(B_\lambda)_{\lambda \in \Lambda}$ be a direct system such that $\varinjlim B_\lambda = B$. We define a $m$-th jet algebra of $B$ associated with $\left( B_\lambda\right)_{\lambda \in \Lambda}$  as
\[
 \mathcal{L}_m(B) \colon = \varinjlim \mathcal{L}_m(B_\lambda).
\]
For every $R$-algebra morphism $\phi : B \rightarrow C$, we define a morphism $\mathcal{L}_m(\phi): \mathcal{L}_m (B) \rightarrow \mathcal{L}_m(C)$  as
\[    \mathcal{L}_m(\phi) = \epsilon_{m,\mathcal{L}_m(B)}^C (  \eta^B_{m,\mathcal{L}_m(B)} (\operatorname{id}_{\mathcal{L}_m(B)})\circ \phi)   . 
\] 
It follows from the previous arguments, $\mathcal{L}_m(-)\colon \mathbf{Alg}_R \rightarrow \mathbf{Alg}_R$ is a functor, which we call $m$-th \textsf{jet algebra functor}.
\end{definition}

\begin{remark} \label{locp}
Let $m \in \mathbb{N}$. From the construction in Definition-Lemma \ref{defft} and the preceding discussion, we deduce the following:
\begin{enumerate} 
\item $B$ can be identified with a subalgebra of $\mathcal{L}_m(B)$. Therefore, for any \( b \in B \), its image in $\mathcal{L}_m(B)$ is also denoted by $b$.
\item Let $m \in \mathbb{N}$, $n \in \mathbb{N}^*$, and $B = R[\xn]/ I$ for some ideal $I$ of $R[\xn]$, with $f \in B$. Given that $B_f \simeq R[\xn, t]/ \langle I, ft-1 \rangle$, it follows straightforwardly that $\mathcal{L}_m(B_f) \simeq \mathcal{L}_m(B)_f$.
\item Let $\mathfrak{p}$ be a prime ideal of $B$. Considering $(B_f)_{f \notin \mathfrak{p}}$ as a directed system via the canonical maps $\pi_{f_1, f_2} : B_{f_1} \rightarrow B_{f_2}$ when $f_1 | f_2$, we obtain $B_\mathfrak{p} = \varinjlim B_f$. Combining this with (2) and the construction discussed above, we derive
$$\mathcal{L}_m(B_\mathfrak{p}) \simeq \varinjlim \mathcal{L}_m(B_f) \simeq \varinjlim \mathcal{L}_m(B)_f \simeq \mathcal{L}_m(B)_{\mathfrak{p}\mathcal{L}_m(B)}.$$
\end{enumerate} 
\end{remark}

The following Definition-Lemma presents an alternative perspective on the first jet functor, which is useful to motivate our subsequent discussions.
\begin{deflem}\label{sym}
Let $B$ and $C$ be $R$-algebras. 
\begin{enumerate} 
\item We denote $d_B$ the canonical $R$-derivation from $B$ to $\Omega^1_{B/R}$. 
\item We denote $\varsigma_B$ the canonical structural map from $B$ to $\operatorname{Sym}^\ast ( \Omega^1_{B/R}) $ giving $\operatorname{Sym}^\ast ( \Omega^1_{B/R})$ its structure of $B$-algebra.
\item We denote $\iota_B$ the canonical injection from $\Omega^1_{B/R}$ to $\operatorname{Sym}^\ast ( \Omega^1_{B/R}) $. 
\item We have natural correspondences
\[
\underline{J_1(B)}  \simeq^\Theta  \underline{B} \times \operatorname{Der}_R( B , - )\simeq^\Upsilon \underline{\operatorname{Sym}^\ast ( \Omega^1_{B/R})}  .
\]
More precisely, given an $R$-algebra $C$,
\begin{itemize}
\item For any $\psi \in \underline{J_1(B)}(C)$, $\Theta_C (\psi) = (\psi_0, \psi_1)$, where $\psi_0 \in \operatorname{Hom}_{ \mathbf{Alg}_R} ( B, C) $ and $\psi_1\in \operatorname{Der}_R( B , C)$ are defined by the equality $\psi = \psi_0 + [t] \psi_1$.  
\item For any $\psi_0 \in \operatorname{Hom}_{ \mathbf{Alg}_R} ( B, C) $ and $\psi_1\in \operatorname{Der}_R( B , C)$, $\Theta_C^{-1} (\psi_0, \psi_1)=\psi_0 + [t] \psi_1$.
\item For any $\psi_0 \in \operatorname{Hom}_{ \mathbf{Alg}_R} ( B, C) $ and $\psi_1\in \operatorname{Der}_R( B , C)$, $\Upsilon_C (\psi_0, \psi_1)=\widetilde{\psi_1}$ is defined as follows. By the universal property of $\Omega^1_{B/R}$, considering $C$ as a $B$-module via $\psi_0$, there is the $B$-linear map $h: \Omega^1_{B/R}\rightarrow C$ such that $ \psi_1 = h \circ d_B$ (see \cite[Definition 1.2]{Qing}) and, using the universal property of $ \operatorname{Sym}^\ast ( \Omega^1_{B/R})$ (see \cite[\S 6.1, Proposition 2]{NB}), we obtain the $B$-algebra homomorphism $\widetilde{\psi_1}:  Sym ( \Omega^1_{B/R}) \rightarrow C$ such that $h= \widetilde{\psi_1} \circ \iota_B$.
\item For any $\psi:  \operatorname{Sym}^\ast ( \Omega^1_{B/R}) \rightarrow C$,  $\Upsilon_C^{-1} (\psi)=(\psi_0, \psi_1)$ with $\psi_0= \psi \circ \varsigma_B$ and $\psi_1= \psi \circ \iota_B \circ d_B $;
\end{itemize}

\item The morphism 
$$\chi_B := {\epsilon_1}_{\operatorname{Sym}^\ast ( \Omega^1_{B/R})}^B\circ \Theta^{-1}_{\operatorname{Sym}^\ast ( \Omega^1_{B/R})} \circ \Upsilon^{-1}_{\operatorname{Sym}^\ast ( \Omega^1_{B/R})} (\operatorname{id}_{ \operatorname{Sym}^\ast ( \Omega^1_{B/R})}):\mathcal{L}_1 ( B) \rightarrow \operatorname{Sym}^\ast ( \Omega^1_{B/R})$$ is an $R$-algebra isomorphism from $ \mathcal{L}_1 ( B)$ to $\operatorname{Sym}^\ast ( \Omega^1_{B/R})$ that sends $[x_{i,0}]_{\mathcal{L}_1(I)}$ to $ \varsigma_B ([x_i]_I)$ and $[x_{i,1}]_{\mathcal{L}_1(I)}$ to $\iota_B (d_B( [x_i]_I))$. 

\item We have an isomorphims of $R$-modules
$$ ( \mathcal{L}_1 ( B)\otimes_B {N_s (\mathfrak{p})}) / \langle x_{i,1}\rangle^2_{i\in \nn{n}}( \mathcal{L}_1 ( B)\otimes_B {N_s (\mathfrak{p})}) \simeq  N_s(\mathfrak{p}) \oplus \Big( 
\Omega^1_{B/R}\otimes_B N_s(\mathfrak{p}) \Big),$$
sending $[[x_{i,0}]_{\mathcal{L}_1(I)}\otimes 1]_{\langle x_{i,1}\rangle^2_{i\in \nn{n}}}$ to $\zeta_{\mathfrak{p}, s} (\varsigma_B ([x_i]_I))$ and $[[x_{i,1}]_{\mathcal{L}_1(I)}\otimes 1]_{\langle x_{i,1}\rangle^2_{i\in \nn{n}}}$ to $d_B( [x_i]_I) \otimes 1$. 
\end{enumerate}
\end{deflem}

\begin{proof}
\begin{enumerate}
\item[(4)] Let $\psi \in \underline{J_1(B)}(C)$, and denote $\Theta_C (\psi) = (\psi_0, \psi_1)$, where $\psi_0 \in \operatorname{Hom}_{\mathbf{Alg}_R} ( B, C) $ and $\psi_1 \in \operatorname{Der}_R( B , C)$ are defined by the equality $\psi = \psi_0 + [t] \psi_1$.

Take any $a, b \in B$. As $\psi$ is an $R$-algebra morphism, we have $\psi(ab) = \psi(a) \psi(b)$. Specifically, $\psi(ab) = \psi_0(ab) + [t] \psi_1(ab)$, and $\psi(a)\psi(b) = (\psi_0(a) + [t] \psi_1(a))(\psi_0(b) + [t] \psi_1(b))$. Since $1$ and $[t]$ are linearly independent over $R$, we equate coefficients to obtain $\psi_0(ab) = \psi_0(a)\psi_0(b)$ and $\psi_1(ab) = \psi_0(b)\psi_1(a) + \psi_0(a)\psi_1(b)$. 

Thus, $\psi_0 \in \operatorname{Hom}_{\mathbf{Alg}_R} ( B, C) $ and $\psi_1 \in \operatorname{Der}_R( B , C)$, as $\psi_0$ and $\psi_1$ are $R$-linear. This shows that $\Theta_C$ is well-defined. The subsequent steps of the proof follow straightforwardly from this observation.
\end{enumerate} 
\end{proof}

\section{Thickened fibers of jet algebras}
We begin by recalling that $N_s(\mathfrak{p}) \colon = B_\mathfrak{p} / \mathfrak{p}^s B_\mathfrak{p}$, and $\zeta_{\mathfrak{p}, s}$ denotes the canonical map from $B$ to $N_s(\mathfrak{p})$. The next definition introduces the notion of thickened fibers and localized thickened fibers within the context of a jet algebra functor. These notions are significant for our study of ramification.

\begin{deflem} \label{thickfib}
Let $\mathfrak{p} \subseteq B$ be a prime and $s, m \in \mathbb{N}^\ast$.
\begin{enumerate} 
\item We define the \textsf{$m$-thickened fiber of $\underline{J_s(B)}$}, denoted as $\underline{J_s(B)}_{\mathfrak{p}^m}$, as the subfunctor of $\underline{J_s(B)}$ that associates to any $R$-algebra $C$ the set:
\[
\underline{J_s(B)}_{\mathfrak{p}^m}(C) =\left\{\psi \in \underline{J_s(B)}(C) \colon \tau^C_{0,s} \circ \psi(\mathfrak{p}^m) = 0 \right\}.
\]
We have $\underline{J_s(B)}_{\mathfrak{p}^m}\simeq \underline{J_s(B)} \times_{\underline{B}} \underline{B/\mathfrak{p}^m}$, therefore $\underline{J_s(B)}_{\mathfrak{p}^m}$ is corepresented by $\mathcal{L}_s(B) \otimes_B B/\mathfrak{p}^m$. 

\item We define the \textsf{$m$-localized thickened fiber of $\underline{J_s(B)}$}, denoted as $\underline{J_s(B)}_{N_m(\mathfrak{p})}$, as the subfunctor of $\underline{J_s(B)}$ that associates to any $R$-algebra $C$ the set:
\[
\underline{J_s(B)}_{N_m(\mathfrak{p})}(C) = \left\{\psi \in \underline{J_s(B)}(C) \colon\tau^C_{0,s} \circ \psi(\mathfrak{p}^m) = 0 \text{ and } \tau^C_{0,s} \circ \psi(B)\setminus \mathfrak{p}  \subseteq C^\times \right\}.
\]
We have $\underline{J_s(B)}_{N_m(\mathfrak{p})}\simeq \underline{J_s(B)} \times_{\underline{B}} \underline{N_m(\mathfrak{p})}$, therefore $\underline{J_s(B)}_{N_m(\mathfrak{p})}$ is corepresented by $\mathcal{L}_s(B) \otimes_{B} N_m(\mathfrak{p})$. Moreover, we have the following isomorphisms as $B$-algebras:
$$   \mathcal{L}_s(B) \otimes_{B} N_m(\mathfrak{p})  \simeq \mathcal{L}_s(B_{\mathfrak{p}}) \otimes_{B_{\mathfrak{p}}} N_m(\mathfrak{p})  \simeq  \mathcal{L}_s(B_\mathfrak{p})/(\mathfrak{p}\mathcal{L}_s(B_\mathfrak{p}))^m.$$
\end{enumerate}
\end{deflem}

\begin{proof}
\begin{enumerate}
\item By definition, for any $R$-algebra $C$, we have:
\[
\underline{J_s(B)}\times_{\underline{B}} \underline{B/\mathfrak{p}^m}(C) = \{(\psi, \phi)\in \Hom_R(B, C[t]/(t^{s+1}))\times \Hom_R(B/\mathfrak{p}^s, C)\colon \tau^C_{0,s,\ast}(\psi)= \pi_m^{\ast}(\phi) \}.
\]
where $\pi_m\colon B\rightarrow B/\mathfrak{p}^m$ is the reduction modulo $\mathfrak{p}^m$.

We want to show that the forgetful map
\begin{align*}
\underline{J_s(B)}\times_{\underline{B}} \underline{B/\mathfrak{p}^m}(C) & \rightarrow \underline{J_s(B)}(C) \\
(\psi, \phi) & \mapsto \psi 
\end{align*}
is injective and its image contains $\psi\in \Hom_R(B, C[t]/(t^s))$ if and only if $\psi(\mathfrak{p}^m)\subseteq (t)$.

The forgetful map is injective due to the surjectivity of $\pi_m$. Now let us describe the image of that map; on one hand, for $(\psi, \phi)\in \underline{J_s(B)}\times_{\underline{B}} \underline{B/\mathfrak{p}^m}(C)$, we have that
\[
\tau^C_{0,s,\ast}(\psi)(\mathfrak{p}^m)=\pi_m^\ast(\phi)(\mathfrak{p}^m)=\phi(\pi_m(\mathfrak{p}^m))=\phi(0)=0.
\]
On the other hand, if $\psi\in \underline{J_s(B)}(C)$ has the property that $\psi(\mathfrak{p}^m)\subseteq (t)$, then $\mathfrak{p}^m\subseteq \ker \tau_{0,s,\ast}^C(\psi)$, thus there is a map $\phi\colon B/\mathfrak{p}^m\rightarrow C$ such that $\phi\circ\pi_m=\tau_{0,s,\ast}^C(\psi)$, showing that $\psi$ belongs to the image of the forgetful map.

\item
By definition, for any \( R \)-algebra \( C \), we have
\[
\underline{J_s(B)} \times_{\underline{B}} \underline{N_m(\mathfrak{p})}(C) = \{(\psi, \phi) \in \Hom_R(B, C[t]/(t^{s+1})) \times \Hom_R(N_m(\mathfrak{p}), C) \colon \tau^C_{0,s,\ast}(\psi) = \zeta_{\mathfrak{p},m}^{\ast}(\phi) \}.
\]
The forgetful map
\begin{align*}
\underline{J_s(B)} \times_{\underline{B}} \underline{N_m(\mathfrak{p})}(C) & \rightarrow \underline{J_s(B)}(C) \\
(\psi, \phi) & \mapsto \psi 
\end{align*}
is injective because \( \zeta_{\mathfrak{p},m} \) is an epimorphism, and localization and quotient maps are epimorphisms in the category of commutative unitary rings. According to the universal property of localization and the first isomorphism theorem, the map \( \phi \) exists and is unique if and only if both of the following conditions hold:
\begin{enumerate}
    \item \( \tau^C_{0, s} \circ \psi(\mathfrak{p}^m) = 0 \),
    \item \( \tau^C_{0, s} \circ \psi(B \setminus \mathfrak{p}) \subseteq C^\times \).
\end{enumerate}

The isomorphism \( \mathcal{L}_s(B) \otimes_{B} N_m(\mathfrak{p}) \simeq \mathcal{L}_s(B_{\mathfrak{p}}) \otimes_{B_{\mathfrak{p}}} N_m(\mathfrak{p}) \simeq \mathcal{L}_s(B_\mathfrak{p})/(\mathfrak{p}\mathcal{L}_s(B_\mathfrak{p}))^m \) follows from Remark \ref{locp}.

\end{enumerate}
\end{proof}

There is a natural correspondence between the localized thickened fibers at $\mathfrak{p}\subseteq B$ of its first jet functor 
and the derivations of $N_{m+1}(\mathfrak{p})$.
\begin{lemma} \label{Nsp}
Let $B$ be an $R$-algebra, $\mathfrak{p}$ a prime ideal of $B$, and $m \in \mathbb{N}^\ast$. Given a morphism of rings $\eta\colon N_m(\mathfrak{p}) \rightarrow C$, we define  $\Psi_C$ as follows:

\[
\begin{aligned}
\Psi_C \colon \underline{J_1(B)}_{N_m(\mathfrak{p})}(C) & \rightarrow  \Der_R(N_{m+1}(\mathfrak{p}), C) \\
\phi & \mapsto   \widetilde{\phi_1},
\end{aligned}
\]

\noindent where $\widetilde{\phi_1}\colon N_{m+1}(\mathfrak{p}) \rightarrow C$ is the unique map such that $ \widetilde{\phi_1}\circ \zeta_{\mathfrak{p},m}=\phi_1$. Here, $\phi_1$ is the derivation from $B$ to $C$ defined from the equality $\phi= {\phi}_0 + [t]{\phi}_1$. 

\noindent Then the family of maps $\Psi = (\Psi_C)_{C \in \mathbf{Alg}_{N_m(\mathfrak{p})}}$ forms a natural correspondence from $\underline{J_1(B)}_{N_m(\mathfrak{p})}$ to $\operatorname{Der}_R(N_{m+1}(\mathfrak{p}), -)$. Its inverse is the family $\Phi = (\Phi_C)_{C \in \mathbf{Alg}_{N_m(\mathfrak{p})}}$ such that for any $N_m(\mathfrak{p})$-algebra $C$ with structural morphism $\eta \colon N_m(\mathfrak{p})\rightarrow C$, $\Phi_C$ is defined by:

\[
\begin{aligned}
\Phi_C :  \Der_R(N_{m+1}(\mathfrak{p}), C) &\rightarrow  \underline{J_1(B)}_{N_m(\mathfrak{p})}(C) \\
\psi & \mapsto  \eta \circ \zeta_{\mathfrak{p},m} + [t]\psi \circ \zeta_{\mathfrak{p},m}.
\end{aligned}
\]
\end{lemma}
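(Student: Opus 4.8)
The plan is to exploit the canonical splitting of a first-order jet recorded in Definition-Lemma \ref{sym}(4): every $\phi \in \underline{J_1(B)}(C)$ decomposes uniquely as $\phi = \phi_0 + [t]\phi_1$ with $\phi_0 \in \operatorname{Hom}_{\mathbf{Alg}_R}(B,C)$ and $\phi_1 \in \Der_R(B,C)$, where $C$ carries the $B$-module structure induced by $\phi_0$. Regarding $\underline{J_1(B)}_{N_m(\mathfrak{p})}$ as a functor on $\mathbf{Alg}_{N_m(\mathfrak{p})}$ (via its corepresenting object $\mathcal{L}_1(B)\otimes_B N_m(\mathfrak{p})$) amounts to fixing $\phi_0 = \eta\circ\zeta_{\mathfrak{p},m}$; indeed, by Definition-Lemma \ref{thickfib}(2) the membership conditions $\tau^C_{0,1}\circ\phi(\mathfrak{p}^m)=0$ and $\tau^C_{0,1}\circ\phi(B\setminus\mathfrak{p})\subseteq C^\times$ are exactly the requirement that $\phi_0$ factor through $\zeta_{\mathfrak{p},m}$, and this also supplies the $N_{m+1}(\mathfrak{p})$-module structure on $C$ needed to make the target $\Der_R(N_{m+1}(\mathfrak{p}),C)$ meaningful. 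Thus the only free datum is the derivation $\phi_1$, and the whole statement reduces to showing that restriction along $\zeta_{\mathfrak{p},m+1}$ is a bijection between such derivations $B\to C$ and derivations $N_{m+1}(\mathfrak{p})\to C$.

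First I would prove that $\Psi_C$ is well defined, which is the heart of the matter. The condition $\phi_0(B\setminus\mathfrak{p})\subseteq C^\times$ lets me invoke Lemma \ref{loc} to extend $\phi_1$ uniquely to $(\phi_1)_\mathfrak{p}\in\Der_R(B_\mathfrak{p},C)$, while $\phi_0(\mathfrak{p}^m)=0$ shows that the $B_\mathfrak{p}$-module structure on $C$ factors through $N_m(\mathfrak{p})=B_\mathfrak{p}/\mathfrak{p}^m B_\mathfrak{p}$, so that $\mathfrak{p}^m B_\mathfrak{p}$ annihilates $C$. I then check that $(\phi_1)_\mathfrak{p}$ vanishes on $\mathfrak{p}^{m+1}B_\mathfrak{p}$: for $p_1,\dots,p_{m+1}\in\mathfrak{p}B_\mathfrak{p}$ the Leibniz rule gives
\[
(\phi_1)_\mathfrak{p}(p_1\cdots p_{m+1}) = \sum_{i=1}^{m+1}\Big(\prod_{l\neq i}p_l\Big)\,(\phi_1)_\mathfrak{p}(p_i),
\]
and each coefficient $\prod_{l\neq i}p_l$ lies in $\mathfrak{p}^m B_\mathfrak{p}$, hence acts as $0$ on $C$; linearity finishes the vanishing. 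Therefore $(\phi_1)_\mathfrak{p}$ descends to a derivation $\widetilde{\phi_1}\colon N_{m+1}(\mathfrak{p})\to C$ with $\widetilde{\phi_1}\circ\zeta_{\mathfrak{p},m+1}=\phi_1$, and it is unique because $\zeta_{\mathfrak{p},m+1}$ is an epimorphism (a composite of a localization and a quotient). This drop of one power, forced by the Leibniz rule together with the vanishing of $\mathfrak{p}^m$ on $C$, is the step I expect to be the main obstacle and the only genuinely substantive point.

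Next I would verify that $\Phi_C$ lands in the thickened fiber and is a two-sided inverse. For $\psi\in\Der_R(N_{m+1}(\mathfrak{p}),C)$, the map $\psi\circ\zeta_{\mathfrak{p},m+1}$ is a derivation $B\to C$ and $\eta\circ\zeta_{\mathfrak{p},m}$ is an $R$-algebra map, so by the $\Theta$ correspondence $\Phi_C(\psi)=\eta\circ\zeta_{\mathfrak{p},m}+[t]\,\psi\circ\zeta_{\mathfrak{p},m+1}$ is a genuine element of $\underline{J_1(B)}(C)$; its degree-zero part kills $\mathfrak{p}^m$ (since $\zeta_{\mathfrak{p},m}(\mathfrak{p}^m)=0$) and sends $B\setminus\mathfrak{p}$ to units, so it lies in $\underline{J_1(B)}_{N_m(\mathfrak{p})}(C)$. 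Then $\Psi_C\circ\Phi_C=\operatorname{id}$ because the $\phi_1$-part of $\Phi_C(\psi)$ is $\psi\circ\zeta_{\mathfrak{p},m+1}$, whose unique descent along the epimorphism $\zeta_{\mathfrak{p},m+1}$ is $\psi$ itself, and $\Phi_C\circ\Psi_C=\operatorname{id}$ because recomposing $\phi_0=\eta\circ\zeta_{\mathfrak{p},m}$ with $[t]\,\widetilde{\phi_1}\circ\zeta_{\mathfrak{p},m+1}=[t]\phi_1$ returns $\phi$.

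Finally, for naturality I would take a morphism $g\colon C\to C'$ in $\mathbf{Alg}_{N_m(\mathfrak{p})}$ and check $g_\ast\circ\Psi_C=\Psi_{C'}\circ g_\ast$: the degree-one part of $g\circ\phi$ is $g\circ\phi_1$, whose descent along $\zeta_{\mathfrak{p},m+1}$ is $g\circ\widetilde{\phi_1}$ by uniqueness of the factorization, which is exactly $g_\ast(\widetilde{\phi_1})$. The analogous check for $\Phi$ is identical, and since $\Psi$ and $\Phi$ are pointwise inverse they are mutually inverse natural transformations, completing the proof.
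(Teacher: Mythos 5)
Your proposal is correct and follows essentially the same route as the paper: both arguments fix the degree-zero part as $\eta\circ\zeta_{\mathfrak{p},m}$, extend to the localization $B_\mathfrak{p}$ (the paper lifts the whole jet map $\phi\colon B\rightarrow C[t]/(t^2)$ via the universal property of localization, with the explicit inverse $c-[t]\phi_1(b)c^2$; you lift only the derivation component via Lemma \ref{loc}, which encodes the same formula), then use the Leibniz rule together with $\mathfrak{p}^m B_\mathfrak{p}\subseteq\ker(\phi_0)$ to show the derivation kills $\mathfrak{p}^{m+1}B_\mathfrak{p}$, and descend. The only difference is that you spell out the verification that $\Phi_C$ lands in the thickened fiber, the two inverse identities, and naturality, which the paper dismisses as straightforward.
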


\begin{proof} 
We begin by proving that \( \Psi_C \) is well-defined. Let \( \psi \in \underline{J_1(B)}_{N_m(\mathfrak{p})}(C) \).
According to Definition-Lemma \ref{thickfib}, \( \psi \in \operatorname{Hom}_R(B, C[t]/(t^{2})) \) and \( \tau_{0,1}^B \circ \psi = \eta \circ \zeta_{\mathfrak{p},m} \). We write \( \psi = \psi_0 + [t] \psi_1 \). Then, \( \psi_0 \in \operatorname{Hom}_R(B , C) \) and \( \psi_1 \in \operatorname{Der}_R(B , C) \) as per Definition-Lemma \ref{sym}.

Let $b \in B \backslash \mathfrak{p}$. We want to prove that $\psi(b) \in C[t]/(t^{2})^\times$. We have $\psi_0(b) \in C^\times$, since $\psi_0 = \eta \circ \zeta_{\mathfrak{p},m}$. Let $c$ be the multiplicative inverse of $\psi_0(b)$. Then $c-[t] \psi_1 (b) c^2$ is the inverse of $\psi(b) $. Indeed, $\psi(b)( c-[t] \psi_1 (b) c^2)  =( \psi_0(b) + [t] \psi_1 (b)) (c-[t] \psi_1 (b) c^2) = 1 +[t] (\psi_1 (b)c - \psi_0(b)\psi_1 (b) c^2 ) = 1$. 
 
By the universal property of localization, there exists a unique map \( \overline{\psi}: B_{\mathfrak{p}} \rightarrow C[t]/(t^2) \) such that \( \psi = \overline{\psi} \circ \ell \) where $\ell : B \rightarrow B_\mathfrak{p}$ is the canonical localization morphism. We write \( \overline{\psi} = \overline{\psi_0} + [t] \overline{\psi_1} \). Then, \( \overline{\psi_0} \in \operatorname{Hom}_R(B_\mathfrak{p} , C) \) and \( \overline{\psi_1} \in \operatorname{Der}_R(B_\mathfrak{p}  , C) \).

Since \( \mathfrak{p}^m B_\mathfrak{p} \subseteq \ker(\overline{\psi_0}) \) and \( \overline{\psi_1} \in \operatorname{Der}_R(B_\mathfrak{p}  , C) \), we have \( \overline{\psi_1}(\mathfrak{p}^{m+1} B_\mathfrak{p}) = 0 \). Thus, \( \mathfrak{p}^{m+1} B_\mathfrak{p} \subseteq \ker(\overline{\psi}) \).

Therefore, by the first isomorphism theorem, \( \overline{\psi} \) induces a morphism of \( R \)-algebras \( \widetilde{\psi}: N_{m+1}(\mathfrak{p}) \rightarrow C[t]/(t^2) \). Thus, there exist \( \widetilde{\psi_0} \in \operatorname{Hom}_R(B_\mathfrak{p} , C) \) and \( \widetilde{\psi_1} \in \operatorname{Der}_R(B_\mathfrak{p}  , C) \) such that \( \widetilde{\psi} = \widetilde{\psi_0} + [t] \widetilde{\psi_1} \). By construction, we have \( \widetilde{\psi_1} \circ \zeta_{\mathfrak{p},m} = \psi_1 \).

This establishes that \( \Psi_C \) is well-defined.

To conclude, it is straightforward to show that \( \Phi_C \) is well-defined, and \( \Psi_C \) and \( \Phi_C \) are inverses of each other.
\end{proof}

The following lemma provides an alternative characterization of the fiber of a jet algebra at a prime ideal. In particular, we establish the irreducibility of the fiber.
\begin{lemma}\label{irreduc}
Let $B$ be a $k$-algebra and let $\mathfrak{p}$ be a prime ideal of $B$ such that $k(\mathfrak{p})$ is separably generated over $k$. Then, there exists an isomorphism of $k(\mathfrak{p})$-algebras:
$$\mathcal{L}_1 (B) \otimes_B  k ( \mathfrak{p})  \simeq F_{d+t} ( k ( \mathfrak{p})),$$
where $d = \dim_{k(\mathfrak{p})}(\mathfrak{p}_\mathfrak{p} / \mathfrak{p}^2_\mathfrak{p})$, $t = \operatorname{tr}(k ( \mathfrak{p}) /k)$, and $F_{d+t} ( k ( \mathfrak{p}))$ is a free $k (\mathfrak{p})$-algebra generated by ${d+t}$ elements. 
In particular, $\mathcal{L}_1 (B) \otimes_B  k ( \mathfrak{p})$ is irreducible, and $\mathfrak{p}\mathcal{L}_1(B)$ is a prime ideal of $\mathcal{L}_1 (B)$.
\end{lemma}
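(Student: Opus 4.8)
The plan is to reduce the computation of this fibre to a symmetric algebra and then perform a dimension count. First I would invoke the $R$-algebra isomorphism $\chi_B\colon \mathcal{L}_1(B) \to \operatorname{Sym}^\ast(\Omega^1_{B/k})$ of Definition-Lemma \ref{sym}(5) (here $R=k$); since it sends $[x_{i,0}]_{\mathcal{L}_1(I)}$ to $\varsigma_B([x_i]_I)$, it is in fact an isomorphism of $B$-algebras. Base-changing along the structural map $B\to k(\mathfrak{p})$ and using that the symmetric algebra commutes with base change, I obtain $\mathcal{L}_1(B)\otimes_B k(\mathfrak{p}) \simeq \operatorname{Sym}^\ast_{k(\mathfrak{p})}\big(\Omega^1_{B/k}\otimes_B k(\mathfrak{p})\big)$. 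Because the symmetric algebra of an $N$-dimensional vector space over a field is the free algebra $F_N$ on $N$ generators, it then suffices to prove that $\Omega^1_{B/k}\otimes_B k(\mathfrak{p})$ is a $k(\mathfrak{p})$-vector space of dimension $d+t$.

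Next I would compute this dimension via the second fundamental (conormal) exact sequence attached to $k\to B\to k(\mathfrak{p})$, namely $\mathfrak{p}_\mathfrak{p}/\mathfrak{p}^2_\mathfrak{p} \to \Omega^1_{B/k}\otimes_B k(\mathfrak{p}) \to \Omega^1_{k(\mathfrak{p})/k}\to 0$. The crucial input is the hypothesis that $k(\mathfrak{p})$ is separably generated, equivalently formally smooth over $k$ by Theorem \ref{fs}: this forces the left-hand map to be injective and the sequence to split, which is precisely the content of the splitting $N_2(\mathfrak{p})\simeq k(\mathfrak{p})\oplus \mathfrak{p}_\mathfrak{p}/\mathfrak{p}^2_\mathfrak{p}$ recorded in Lemma \ref{graded}. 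Hence $\dim_{k(\mathfrak{p})}\big(\Omega^1_{B/k}\otimes_B k(\mathfrak{p})\big) = \dim_{k(\mathfrak{p})}(\mathfrak{p}_\mathfrak{p}/\mathfrak{p}^2_\mathfrak{p}) + \dim_{k(\mathfrak{p})}\Omega^1_{k(\mathfrak{p})/k} = d+t$, where the second summand equals $\operatorname{tr}(k(\mathfrak{p})/k)=t$ since $\Omega^1_{k(\mathfrak{p})/k}$ is free of rank equal to the transcendence degree for a separably generated extension (Corollary \ref{tr} and Lemma \ref{ins}). This produces the isomorphism $\mathcal{L}_1(B)\otimes_B k(\mathfrak{p}) \simeq F_{d+t}(k(\mathfrak{p}))$, and as a polynomial ring over a field is an integral domain, its spectrum is irreducible.

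Finally, for the primality of $\mathfrak{p}\mathcal{L}_1(B)$ I would pass through the localized thickened fibre. By Definition-Lemma \ref{thickfib}(2) with $s=m=1$ together with Remark \ref{locp}(3), the fibre just computed is identified with $\mathcal{L}_1(B_\mathfrak{p})/\mathfrak{p}\mathcal{L}_1(B_\mathfrak{p})$, where $\mathcal{L}_1(B_\mathfrak{p}) \simeq \mathcal{L}_1(B)_{\mathfrak{p}\mathcal{L}_1(B)}$ is the localization of $\mathcal{L}_1(B)$ at the image of $B\setminus\mathfrak{p}$. Since this quotient is a domain, $\mathfrak{p}\mathcal{L}_1(B_\mathfrak{p})$ is prime, and contracting along $\mathcal{L}_1(B)\to\mathcal{L}_1(B_\mathfrak{p})$ yields a prime of $\mathcal{L}_1(B)$. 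The step I expect to be most delicate is this descent: one must check that the homogeneous ideal $\mathfrak{p}\mathcal{L}_1(B)=\bigoplus_n \mathfrak{p}\operatorname{Sym}^n_B(\Omega^1_{B/k})$ actually coincides with the contraction of $\mathfrak{p}\mathcal{L}_1(B_\mathfrak{p})$, i.e. that no element of $B\setminus\mathfrak{p}$ is a zero-divisor modulo $\mathfrak{p}\mathcal{L}_1(B)$. This torsion-freeness is exactly what the formal smoothness at $\mathfrak{p}$ provides, through the Jacobian condition of Theorem \ref{fs}(2), which makes the relevant modules free after inverting a suitable element outside $\mathfrak{p}$. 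Thus the dimension count and this saturation argument are where the separable-generation hypothesis does all the work, while the rest of the argument is formal.
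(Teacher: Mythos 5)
Your computation of the fiber is correct and is, in substance, the paper's own argument. Where you base-change the isomorphism $\chi_B$ of Definition-Lemma \ref{sym}(5) and split the conormal sequence of $k\rightarrow B_\mathfrak{p}\rightarrow k(\mathfrak{p})$, the paper instead applies Lemma \ref{Nsp} (maps out of the fiber are exactly $\operatorname{Der}_k(N_2(\mathfrak{p}),-)$) together with Lemma \ref{graded} and the split exact sequence attached to $N_2(\mathfrak{p})\rightarrow k(\mathfrak{p})$; since $\Omega^1_{N_2(\mathfrak{p})/k}\otimes_{N_2(\mathfrak{p})}k(\mathfrak{p})\simeq\Omega^1_{B/k}\otimes_B k(\mathfrak{p})$, both routes are the same $(d+t)$-dimensional count, with separable generation used in the same place, namely to split the sequence.

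The genuine gap is in your final paragraph, and you correctly identified the delicate point but your resolution of it fails. What is needed, as you say, is that $\mathfrak{p}\mathcal{L}_1(B)$ equal its saturation, i.e.\ that no element of $B\setminus\mathfrak{p}$ be a zero-divisor modulo $\mathfrak{p}\mathcal{L}_1(B)$; but the claim that separable generation (formal smoothness) of $k(\mathfrak{p})$ over $k$ provides this is false. Take $\operatorname{char}(k)=0$, $B=k[x,y,z]/(z^2-xy)$, $\mathfrak{p}=(x,z)$; then $k(\mathfrak{p})=k(y)$ is separably generated, yet $\mathcal{L}_1(B)=k[x_0,y_0,z_0,x_1,y_1,z_1]/(z_0^2-x_0y_0,\,2z_0z_1-x_0y_1-x_1y_0)$ gives $\mathcal{L}_1(B)/\mathfrak{p}\mathcal{L}_1(B)\simeq k[y_0,x_1,y_1,z_1]/(x_1y_0)$, which is not a domain: the element $y\notin\mathfrak{p}$ kills $x_1\notin\mathfrak{p}\mathcal{L}_1(B)$. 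Equivalently, $\Omega^1_{B/k}\otimes_B B/\mathfrak{p}$ has $y$-torsion ($y\,dx=0$), and the symmetric algebra of a module with torsion over a domain is never a domain; separable generation controls the fiber at the generic point of $V(\mathfrak{p})$, not the torsion of $\Omega^1_{B/k}$ along $V(\mathfrak{p})$. Your appeal to the Jacobian condition of Theorem \ref{fs}(2) only shows that $\mathfrak{p}\mathcal{L}_1(B_a)$ is prime in $\mathcal{L}_1(B_a)$ for a suitable $a\notin\mathfrak{p}$ (here $a=y$ works), and primality does not descend from a localization: in this example the contraction of $\mathfrak{p}\mathcal{L}_1(B_\mathfrak{p})$ is the strictly larger prime $(x_0,z_0,x_1)$, while the localized fiber is the polynomial ring $k(y_0)[y_1,z_1]$ in $d+t=2$ variables, exactly as the first part of the lemma predicts. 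You should know that the paper's own proof of the ``in particular'' clause has the same defect---it passes in one line from the localized fiber being a domain to primality of $\mathfrak{p}\mathcal{L}_1(B)$, silently assuming the same saturation property---so that clause is in fact false as stated for non-maximal primes; it does hold when $\mathfrak{p}$ is maximal (then $B/\mathfrak{p}=k(\mathfrak{p})$ and no localization intervenes), which is the case in all of the paper's examples.
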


\begin{proof}
Let $C$ be a $k(\mathfrak{p})$-algebra, and let $f : k(\mathfrak{p}) \rightarrow C$ be the structural $k(\mathfrak{p})$-algebra map.

By Lemma \ref{Nsp}, we have an isomorphism of $k(\mathfrak{p})$-modules:
$$ \Hom_{k(\mathfrak{p})}(\mathcal{L}_1(B) \otimes_B k(\mathfrak{p}), C) \simeq \Der_{k}(N_{2}(\mathfrak{p}), C).$$

Since $k(\mathfrak{p})/k$ is separably generated, by Lemma \ref{graded}, we have $N_2(\mathfrak{p}) \simeq k(\mathfrak{p}) \oplus \mathfrak{p}_\mathfrak{p} / \mathfrak{p}^2_\mathfrak{p}$. 

By \cite[Theorem 58]{Matsumura}, the exact sequence:
\begin{equation}\label{seq}
\xymatrix{ 0 \ar[r]& \mathfrak{p}_\mathfrak{p} / \mathfrak{p}^2_\mathfrak{p} \ar[r] & \Omega^1_{N_2(\mathfrak{p})/k} \otimes_{N_2(\mathfrak{p})} k(\mathfrak{p})  \ar[r]& \Omega^1_{k(\mathfrak{p})/k} \ar[r]& 0 }
\end{equation}
splits in the category of $k(\mathfrak{p})$-modules.

Moreover, since we have isomorphism of $k(\mathfrak{p})$-modules $\Omega^1_{k(\mathfrak{p})/k}\simeq k(\mathfrak{p})^t$, and $\mathfrak{p}_\mathfrak{p} / \mathfrak{p}^2_\mathfrak{p}\simeq k(\mathfrak{p})^{d}$, by definition of $d$, we have $\Omega^1_{N_2(\mathfrak{p})/k} \otimes_{N_2(\mathfrak{p})} k(\mathfrak{p})\simeq k(\mathfrak{p})^{d+t}$. 
Thus $\mathcal{L}_1(B) \otimes_B k(\mathfrak{p})\simeq F_{d+t}(k(\mathfrak{p}))$.

By the above, we know that $\mathcal{L}_1(B) \otimes_B k(\mathfrak{p})$ is an integral domain, implying, via the fact that $\mathcal{L}_1(B) \otimes_B k(\mathfrak{p}) \simeq \mathcal{L}_1(B)_{\mathfrak{p}\mathcal{L}_1(B)}/(\mathfrak{p}\mathcal{L}_1(B))_{\mathfrak{p}\mathcal{L}_1(B)}$, that $\mathfrak{p}\mathcal{L}_1(B)$ is a prime ideal of $\mathcal{L}_1(B)$.
\end{proof}

\section{Induced jet action, Taylor map and higher ramification groups}

In this section we study the behavior of the jet algebra functor $\mathcal{L}_m(-)$ when applied to automorphism of algebras. The following lemma proves that if an automorphism is trivial modulo some power of a prime ideal, then the induced automorphism on the corresponding thickened fiber of the jet algebra acts trivially modulo another power of the extension of the prime ideal.

\begin{lemma}\label{gjr}
Let $\mathfrak{p}$ be a fixed prime ideal of $B$, and let $m, s \in \mathbb{N}^\ast$ with $m\leq s$. Consider an automorphism $\sigma \colon B\rightarrow B$ of $R$-algebras such that $\sigma (\mathfrak{p}) =\mathfrak{p}$. If, for all $b \in B$, $\sigma(b)- b \in \mathfrak{p}^{s+1}$, then 
$$\underline{J_m(\sigma )}|_{\underline{J_m(B)}_{\mathfrak{p}^{s-m+1}}} =\operatorname{id}_{\underline{J_m(B)}_{\mathfrak{p}^{s-m+1}} }.$$
\end{lemma}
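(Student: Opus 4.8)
The plan is to reduce this functorial identity to a single pointwise statement — that every point of the thickened fibre annihilates $\mathfrak{p}^{s+1}$ — and then to exploit the nilpotency of the augmentation ideal of $C[t]/(t^{m+1})$.

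First I would unwind the definitions. For an $R$-algebra $C$, the transformation $\underline{J_m(\sigma)}$ acts on $\psi \in \underline{J_m(B)}(C) = \operatorname{Hom}_{\mathbf{Alg}_R}(B, C[t]/(t^{m+1}))$ by precomposition, $\underline{J_m(\sigma)}_C(\psi) = \psi \circ \sigma$. By Definition-Lemma \ref{thickfib}, membership $\psi \in \underline{J_m(B)}_{\mathfrak{p}^{s-m+1}}(C)$ means exactly that $\psi_0 := \tau^C_{0,m} \circ \psi$, which is an $R$-algebra morphism $B \to C$, vanishes on $\mathfrak{p}^{s-m+1}$. Since $\sigma(b) - b \in \mathfrak{p}^{s+1}$ for every $b \in B$, the identity $\psi \circ \sigma = \psi$ — which is what we must prove, and which at once shows the subfunctor is preserved and that the restriction is the identity — is equivalent to $\psi(\mathfrak{p}^{s+1}) = 0$.

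The core of the argument is therefore to show $\psi(\mathfrak{p}^{s+1}) = 0$ whenever $\psi_0(\mathfrak{p}^{s-m+1}) = 0$. Let $J = \ker(\tau^C_{0,m})$ be the augmentation ideal of $C[t]/(t^{m+1})$; since $J = (t)$ we have $J^{m+1} = 0$. As $\mathfrak{p}^{s+1}$ is additively generated by products $a_1 \cdots a_{s+1}$ with each $a_i \in \mathfrak{p}$ (the ideal absorbs the $B$-coefficients into a factor of $\mathfrak{p}$), the additivity of $\psi$ reduces us to such a product. Writing $\psi(a_i) = \psi_0(a_i) + n_i$ with $n_i \in J$ and expanding
$$
\psi(a_1 \cdots a_{s+1}) = \prod_{i=1}^{s+1}\bigl(\psi_0(a_i) + n_i\bigr) = \sum_{S \subseteq \{1,\dots,s+1\}} \Bigl(\prod_{i \notin S}\psi_0(a_i)\Bigr)\Bigl(\prod_{i \in S} n_i\Bigr),
$$
any term with $|S| \geq m+1$ lies in $J^{m+1} = 0$. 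For a surviving term $|S| \leq m$, whence $|S^c| \geq s - m + 1$, and since $\psi_0$ is a ring morphism the constant factor equals $\psi_0\bigl(\prod_{i \notin S} a_i\bigr)$ with $\prod_{i \notin S} a_i \in \mathfrak{p}^{s-m+1}$; hence it vanishes. Every term is zero, so $\psi(a_1\cdots a_{s+1}) = 0$ and thus $\psi(\mathfrak{p}^{s+1}) = 0$.

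The main obstacle — really the only place where the bookkeeping is delicate — is this pigeonhole step: one must check that the truncation index $m$ together with $m \leq s$ forces at least $s-m+1$ constant factors in each surviving monomial, matching precisely the power $\mathfrak{p}^{s-m+1}$ in the hypothesis $\psi_0(\mathfrak{p}^{s-m+1}) = 0$. Once that alignment is confirmed, $\psi \circ \sigma = \psi$ holds for every $C$ and every $\psi$ in the fibre, which is exactly the asserted restriction identity. (The argument is insensitive to whether $\underline{J_m(\sigma)}$ is read as $\psi \mapsto \psi\circ\sigma$ or $\psi\mapsto\psi\circ\sigma^{-1}$, since $\sigma(\mathfrak{p})=\mathfrak{p}$ yields $\sigma^{-1}(b)-b \in \mathfrak{p}^{s+1}$ as well.)
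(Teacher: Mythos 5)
Your proof is correct and follows essentially the same route as the paper's: both reduce the claim to showing $\psi(\mathfrak{p}^{s+1})=0$ for every point $\psi$ of the thickened fibre, by splitting $\psi(a)$ into its constant part (killed on $\mathfrak{p}^{s-m+1}$ by the fibre condition) and its nilpotent part in $(t)$, and then observing that in any product of $s+1$ such factors either at least $m+1$ nilpotent factors appear (killed by $t^{m+1}=0$) or at least $s-m+1$ constant factors appear; the final step $\psi(\sigma(b))=\psi(b)+\psi(\sigma(b)-b)=\psi(b)$ is identical. (Only note that your word ``equivalent'' is a slight overstatement --- $\psi\circ\sigma=\psi$ does not imply $\psi(\mathfrak{p}^{s+1})=0$ --- but the implication you actually use is the correct one, so nothing is affected.)
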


\begin{proof}
Consider an arbitrary $R$-algebra $C$ and $\psi \in \underline{J_m(B)}_{\mathfrak{p}^{s-m+1}}(C)$. 
We have that 
$$\psi(\mathfrak{p})\subseteq ([t])+\left( \tau_{0,m}^C(\psi(\mathfrak{p})) \right)  ,$$
therefore
$$\psi(\mathfrak{p}^{s+1})\subseteq \sum_{j=0}^{s+1} ([t]^j)\left(\tau_{0,m}^C(\psi(\mathfrak{p}^{s-j+1})) \right) .$$

By definition, $\tau_{0,m}^C(\psi(\mathfrak{p}^{s-m+1}))=0$, therefore $\tau_{0,m}^C(\psi(\mathfrak{p}^{s-j+1}))=0$ for $j\leq m$, while for $j>m$ we have that $t^j=0$.
It follows that $\psi(\mathfrak{p}^{s+1})=0$.

Hence, for all $b\in B$ we have that 
$$ \underline{J_m(\sigma )}(\psi)(b)= \rho_{t,m}(\sigma) ( \psi(b))= \psi \circ \sigma (b)= \psi(b)+ \psi(\sigma(b)-b)=\psi(b)  \, , $$
where the last equality follows from the assumption that for any $b\in B$, $\sigma(b)-b\in \mathfrak{p}^{s+1}$.
\end{proof}

In the following example, we observe that the converse of the above lemma is not generally.

\begin{exmp} \label{ex0}
\begin{enumerate}
\item Let \( k = \mathbb{F}_2(y) \), \( B = \frac{k[x]}{\langle (x^2 - y)^2 \rangle} \), and \( \mathfrak{p} := \frac{\langle x^2 - y \rangle}{\langle (x^2 - y)^2 \rangle} \). Consider the map \( \sigma : B \rightarrow B \) sending \( [x] \) to \( [x + (x^2 - y) x] \). It is an involution, since \( \sigma^2([x]) = \sigma([x + (x^2 - y) x]) = [x + (x^2 - y) x + (x^2 - y) x] = [x] \). We observe that \( \sigma([x]) - [x] \notin \mathfrak{p}^2 \). \\
However, \( \mathcal{L}_1(B) = \frac{k[x_0, x_1]}{\langle (x_0^2 - y_0)^2 \rangle} \), and we have \( \mathcal{L}_1(\sigma)([x_0]) = [x_0] \) and \( \mathcal{L}_1(\sigma)([x_1]) = [x_1] \). In this case, \( k(\mathfrak{p}) \) fails to be separably generated over \( k \). It is well known that separability allows a deeper understanding of higher ramification.

\item Let \( k \) be a field of characteristic \( 2 \), \( B = k[x, y] \), \( \mathfrak{p} = \langle x, y \rangle \), and \( \sigma \) be the automorphism of \( k[x, y] \) sending \( x \) to \( x + y^2 \) and \( y \) to \( y \). Then, \( \sigma(x) - x \notin \mathfrak{p}^3 \). However, \( \mathcal{L}_1(B) = k[x_0, y_0, x_1, y_1] \), \( \mathcal{L}_1(\sigma)(x_0) = x_0 \), \( \mathcal{L}_1(\sigma)(x_1) = x_1 \), and \( \mathcal{L}_1(\sigma)(y_i) = y_i \) for \( i \in \{1, 2\} \). In this situation, \( s = \operatorname{char}(k) \). We observe that the trace map here sends everything to \( 0 \), making it further away from being surjective.
\end{enumerate}
\end{exmp}

We can define a morphism of $R$-algebras that behaves similarly to the well-known Taylor map over algebras of finite type. What sets this map apart is its property of being a map of $R$-algebras. This characteristic proves to be particularly advantageous when dealing with matters related to algebraic geometry.

\begin{deflem}\label{Taylor}
Given $B$ an $R$-algebra, $\mathfrak{p}$ be a prime ideal of $B$, and an integer $s \geq 1$. We define the \textsf{Taylor morphism at $\mathfrak{p}$ of order $s$}, denoted by ${T}_{\mathfrak{p},s}$, as the morphism $${\eta_s}^{B_{\mathfrak{p}}}_{\mathcal{L}_1 (B_{\mathfrak{p}})} (\iota_{\mathfrak{p}, s}) \colon B_{\mathfrak{p}} \rightarrow (\mathcal{L}_1 (B)\otimes_{B} {N_s(\mathfrak{p})})[t]/(t^2),$$ 
where $\iota_{\mathfrak{p}, s}: \mathcal{L}_1 (B_{\mathfrak{p}}) \rightarrow \mathcal{L}_1 (B_{\mathfrak{p}})\otimes_{B_{\mathfrak{p}}} {N_s(\mathfrak{p})} \simeq  \mathcal{L}_1 (B)\otimes_{B} {N_s(\mathfrak{p})} $ 
is the canonical map sending $\frac{b}{1}$ to $b \otimes 1$.
Specifically, when $B = k [\xn]/ \mathfrak{a}$ where $n \in \mathbb{N}^*$ and $\mathfrak{a}$ is an ideal of $k[\xn]$, writing $\mathfrak{p}= \mathfrak{P}/ \mathfrak{a}$ and identifying $B_\mathfrak{p}$ with $k [\xn]_\mathfrak{P}/ \mathfrak{a}_\mathfrak{P}$, the Taylor morphism ${T}_{\mathfrak{p},s}$ sends $\left[\frac{f}{1} \right]_{\mathfrak{a}_\mathfrak{P}}$ to $\left[\displaystyle \left[f\right]_{\mathfrak{a}} + [t] \sum_{j=1}^n \left[ \frac{\partial }{\partial x_{j} }\left(f\right)\right]_{\mathfrak{a}} x_{j,1}  \right]_{ \mathcal{L}_1( {\mathfrak{a}})}\! \!\!\!\!\! \otimes 1$.
\end{deflem}

\begin{proof}
The morphism $\iota_{\mathfrak{p}, s}$ is well-defined by Definition-Lemma \ref{thickfib}.
We consider the map
$$\begin{array}{lllll}
	\widetilde{T}\colon & B_ {\mathfrak{p}}&  \rightarrow &(\mathcal{L}_1 (B)_{\mathfrak{p}\mathcal{L}_1 (B)} \otimes_{B_{\mathfrak{p}}} {N_s(\mathfrak{p})})[t]/(t^2) \\
	& \left[\frac{f}{1}\right]_{\mathfrak{a}_\mathfrak{P}} & \mapsto  &\left[\displaystyle \left[f\right]_{\mathfrak{a}} + [t] \sum_{j=1}^n \left[ \frac{\partial }{\partial x_{j} }\left(f\right)\right]_{\mathfrak{a}} x_{j,1}  \right]_{ \mathcal{L}_1( {\mathfrak{a}})}\! \!\!\!\!\! \otimes 1
\end{array}$$
is a map of $R$-algebras, indeed $\tilde{T}$ is by definition $R$-linear and:
\begin{align*}
\tilde{T}\left( \left[\frac{f}{1}\right]_{\mathfrak{a}_\mathfrak{p}}\right)\cdot \tilde{T}\left( \left[\frac{g}{1}\right]_{\mathfrak{a}_\mathfrak{p}}\right)  
=  & \tilde{T}\left( \left[\frac{fg}{1}\right]_{\mathfrak{a}_\mathfrak{p}}\right).
\end{align*}
Hence, $T_{\mathfrak{p}, s}$ and $\tilde{T}$ coincide, since
$$ T_{\mathfrak{p}, s}\left( \left[\frac{x_i}{1}\right]_{\mathfrak{a}_\mathfrak{P}}\right) = {\eta_s^B}_{\mathcal{L}_1 (B)} (\iota_{\mathfrak{p}, s})\left( \left[\frac{x_i}{1}\right]_{\mathfrak{a}_\mathfrak{P}}\right)=\left[ \left[{x_i}\right]_{\mathfrak{a}} + [t]x_{i,1} \right]_{ \mathcal{L}_1( \mathfrak{a})}   \otimes 1, \quad \forall i \in \nn{n}. $$
\end{proof}


\begin{definition} \label{acttaylor}
Consider an automorphism \( \sigma: B \rightarrow B \) of the \( R \)-algebra \( B \) such that for all \( b \in B \), \( \sigma(\mathfrak{p}) = \mathfrak{p} \) and \( s \in \mathbb{N} \). This induces an automorphism on \( \operatorname{Im}({T}_{\mathfrak{p},s}) \) the image of ${T}_{\mathfrak{p},s}$, denoted \( \tau_{\mathfrak{p},s}(\sigma) \), uniquely defined by
\[ \tau_{\mathfrak{p},s}(\sigma)\left({T}_{\mathfrak{p},s}\left(b\right)\right) = {T}_{\mathfrak{p},s}\left(\sigma\left(b\right)\right), \quad \text{for all } b \in B_\mathfrak{p}. \]
\end{definition}

\begin{remark}
We observe that \( \tau_{\mathfrak{p},s}(\sigma) \circ {T}_{\mathfrak{p},s} = \eta_{m,\mathcal{L}_m(B)}^B(\iota_{\mathfrak{p}, s} \circ \mathcal{L}_1(\sigma)) \). In particular, \( \tau_{\mathfrak{p},s}(\operatorname{id}_B) = \operatorname{id}_{\operatorname{Im}({T}_{\mathfrak{p},s})} \).
\end{remark}

We proceed by proving one of the main results of this paper, which asserts that the kernel of the Taylor morphism of order $s$ at a prime is the prime raised to the power of $s+1$, when the residue field of the involved prime is separably generated .
\begin{introthm1}{Theorem} \label{mt}
Let $k$ be a field, $s\geq 1$, $B$ be a finitely presented $k$-algebra, and $\mathfrak{p}$ be a prime ideal of $B$. 
We assume that $k(\mathfrak{p})$ is separably generated over $k$, and $\operatorname{char}(k)$ is either $0$ or greater than $s$. Then $\operatorname{ker}(T_{\mathfrak{p}, s})=\mathfrak{p}_\mathfrak{p}^{s+1}$.
\end{introthm1} 

\begin{proof} 
Since $B$ is a finitely presented $k$-algebra, we have $B\simeq  \frac{k[\xn]}{\mathfrak{a}}$ for some $n\in \mathbb{N}$ and $\mathfrak{a}$ an ideal of $k[\xn]$. Therefore, without loss of generality, we can assume that $B = \frac{k[\xn]}{\mathfrak{a}}$ and $\mathfrak{p}=\mathfrak{P}/\mathfrak{a}$, where $\mathfrak{P}$ is a prime ideal of $k[x_1, \dots , x_n]$ containing $\mathfrak{a}$ by the prime ideal correspondence in quotients. Thus, from Lemma \ref{ss-1} and the isomorphism $ \mathcal{L}_s(B) \otimes_{B} N_m(\mathfrak{p})  \simeq  \mathcal{L}_s(B_\mathfrak{p})/(\mathfrak{p}\mathcal{L}_s(B_\mathfrak{p}))^m$ in Definition-Lemma \ref{thickfib}, we obtain:
\begin{align*} 
\operatorname{ker}( T_{\mathfrak{p},s}) &= \left\{ \left[f\right]_{\mathfrak{a}_\mathfrak{p}} \in B_\mathfrak{p} |\ \left[f\right]_{\mathfrak{a}_\mathfrak{p}} \in\mathfrak{p}_\mathfrak{p}^{s} \text{ and for } i\in \nn{n}, \left[\left(\frac{\partial}{\partial x_i} \right)_{\mathfrak{P}} \left(f \right) \right]_{\mathfrak{a}_\mathfrak{p}} \in \mathfrak{p}_\mathfrak{p}^{s} \right\} \\
&= \left\{   \left[f\right]_{\mathfrak{a}_\mathfrak{p}} \in B_\mathfrak{p} | \ f \in\mathfrak{P}^{s}_\mathfrak{P} \text{ and for } i \in \nn{n}, \left(\frac{\partial}{\partial x_i} \right)_{\mathfrak{P}} \left( f \right) \in \mathfrak{P}^{s}_\mathfrak{P} \right\} = \mathfrak{p}^{s+1}_\mathfrak{p} 
\end{align*}
Therefore, the result follows.
\end{proof} 

We can now establish the equivalence between the behavior of an automorphism on a fat point and the corresponding automorphism on the corresponding thickened jet-fiber.

\begin{cor}\label{Lram}
Let $k$ be a field, $s\geq 1$, $B$ be a finitely presented $k$-algebra, and $\mathfrak{p}$ a prime ideal of $B$ such that $k(\mathfrak{p})$ is separably generated over $k$, and $\operatorname{char}(k)$ is either $0$ or greater than $s$. Consider an automorphism $\sigma: B \rightarrow B$ of the $k$-algebra $B$ such that $\sigma(\mathfrak{p}) = \mathfrak{p}$.

The following statements are equivalent:
\begin{enumerate}
\item For all $b \in B$, $\sigma(b) - b \in \mathfrak{p}^{s+1}$,
\item For all $b \in B_\mathfrak{p}$, $\sigma(b) - b \in \mathfrak{p}^{s+1}B_\mathfrak{p}$,
\item $\underline{J_1(\sigma)}|_{\underline{J_1(B)}_{N_s(\mathfrak{p})}} = \operatorname{id}_{\underline{J_1(B)}_{N_s(\mathfrak{p})}}$,
\item $\overline{\mathcal{L}_1(\sigma) \otimes \operatorname{id}_{N_s(\mathfrak{p})}} = \operatorname{id}_{(\mathcal{L}_1(B)\otimes_B N_s(\mathfrak{p}) )/ \langle x_{i,1}\rangle^2_{i\in \nn{n}}}$, where $\overline{\mathcal{L}_1(\sigma) \otimes \operatorname{id}_{N_s(\mathfrak{p})}}$ is the automorphism induced by $\mathcal{L}_1(\sigma) \otimes \operatorname{id}_{N_s(\mathfrak{p})}$ in $(\mathcal{L}_1(B)\otimes_B N_s(\mathfrak{p}))/ \langle x_{i,1}\rangle^2_{i\in \nn{n}}$. 
\end{enumerate}
\end{cor}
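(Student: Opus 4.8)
The plan is to establish $(1)\Rightarrow(2)\Leftrightarrow(3)\Leftrightarrow(4)$ and then close the cycle with $(2)\Rightarrow(1)$. After reducing to $B=k[\underline{x}_n]/\mathfrak{a}$ and $\mathfrak{p}=\mathfrak{P}/\mathfrak{a}$, observe that $\sigma(\mathfrak{p})=\mathfrak{p}$ lets $\sigma$ extend to an automorphism $\sigma_\mathfrak{p}$ of $B_\mathfrak{p}$ and descend to each $N_r(\mathfrak{p})$. For $(1)\Rightarrow(2)$, write a general element of $B_\mathfrak{p}$ as $b/u$ with $u\notin\mathfrak{p}$ and compute
\[
\sigma_\mathfrak{p}\!\left(\tfrac{b}{u}\right)-\tfrac{b}{u}=\frac{u\,(\sigma(b)-b)-b\,(\sigma(u)-u)}{u\,\sigma(u)};
\]
since $\sigma(b)-b$ and $\sigma(u)-u$ lie in $\mathfrak{p}^{s+1}$ while $u\,\sigma(u)$ is a unit in $B_\mathfrak{p}$, the difference lies in $\mathfrak{p}^{s+1}B_\mathfrak{p}$, which is $(2)$.

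The conceptual core is $(2)\Leftrightarrow(3)$, and this is where Theorem \ref{mt} enters decisively. Condition $(2)$ asserts exactly that $\sigma_\mathfrak{p}-\operatorname{id}$ maps $B_\mathfrak{p}$ into $\mathfrak{p}_\mathfrak{p}^{s+1}$, which by Theorem \ref{mt} equals $\ker(T_{\mathfrak{p},s})$; hence $(2)$ is equivalent to $T_{\mathfrak{p},s}\circ\sigma_\mathfrak{p}=T_{\mathfrak{p},s}$, i.e.\ to $\tau_{\mathfrak{p},s}(\sigma)=\operatorname{id}_{\operatorname{Im}(T_{\mathfrak{p},s})}$. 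Invoking the natural correspondences of Definition-Lemma \ref{defft} together with the remark following Definition \ref{acttaylor} (which expresses $\tau_{\mathfrak{p},s}(\sigma)\circ T_{\mathfrak{p},s}$ through $\mathcal{L}_1(\sigma)$ and $\iota_{\mathfrak{p},s}$), I would translate this equality into the corepresenting objects: since $\underline{J_1(B)}_{N_s(\mathfrak{p})}$ is corepresented by $\mathcal{L}_1(B)\otimes_B N_s(\mathfrak{p})$ (Definition-Lemma \ref{thickfib}), the restriction $\underline{J_1(\sigma)}|_{\underline{J_1(B)}_{N_s(\mathfrak{p})}}$ corresponds by Yoneda to the induced endomorphism $\mathcal{L}_1(\sigma)\otimes\operatorname{id}_{N_s(\mathfrak{p})}$, and $\tau_{\mathfrak{p},s}(\sigma)=\operatorname{id}$ holds precisely when this endomorphism is trivial — which is $(3)$. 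The point requiring care is that $\operatorname{Im}(T_{\mathfrak{p},s})$ carries enough information: its order-zero part recovers $N_s(\mathfrak{p})$ and its order-one part recovers $\Omega^1_{B/k}\otimes_B N_s(\mathfrak{p})$, so that triviality of $\tau_{\mathfrak{p},s}(\sigma)$ forces triviality of the full corepresenting map.

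For $(3)\Leftrightarrow(4)$, the direction $(3)\Rightarrow(4)$ is immediate by passing to the quotient. For the converse, I would use the isomorphism $\chi_B$ of Definition-Lemma \ref{sym} to identify $\mathcal{L}_1(B)\otimes_B N_s(\mathfrak{p})$ with $\operatorname{Sym}^\ast_{N_s(\mathfrak{p})}(\Omega^1_{B/k}\otimes_B N_s(\mathfrak{p}))$, under which $\mathcal{L}_1(\sigma)\otimes\operatorname{id}$ is a \emph{graded} algebra automorphism (it preserves the symmetric degree, as one checks on the generators $x_{i,0}$ and $x_{i,1}$). The quotient by $\langle x_{i,1}\rangle^2$ is exactly the truncation to symmetric degrees $0$ and $1$, namely $N_s(\mathfrak{p})\oplus(\Omega^1_{B/k}\otimes_B N_s(\mathfrak{p}))$ by item (6) of Definition-Lemma \ref{sym}. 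Since the algebra is generated in degrees $\le 1$, if the induced map on the quotient is trivial then $\mathcal{L}_1(\sigma)\otimes\operatorname{id}$ fixes every $x_{i,0}$ (a degree-$0$ comparison) and every $x_{i,1}$ (a degree-$1$ comparison, using that $\langle x_{i,1}\rangle^2$ meets degree $1$ trivially), hence equals the identity, giving $(3)$.

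It remains to close the cycle with $(2)\Rightarrow(1)$, which I expect to be the main obstacle, since $\mathfrak{p}^{s+1}B_\mathfrak{p}\cap B$ is the symbolic power $\mathfrak{p}^{(s+1)}$, a priori larger than $\mathfrak{p}^{s+1}$. The plan is to reduce to maximal ideals, where the two coincide: condition $(2)$ says $\sigma\in G_s(\mathfrak{p})$, and by Corollary \ref{specmsep} this equals $\bigcap_{\mathfrak{m}}G_s(\mathfrak{m})$ over the separable maximal ideals $\mathfrak{m}\supseteq\mathfrak{p}$, so that $\sigma(b)-b\in\mathfrak{m}^{s+1}$ for every such $\mathfrak{m}$ and every $b\in B$. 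Feeding this into the intersection identity $\mathfrak{p}^{s+1}=\bigcap_{\mathfrak{m}}\mathfrak{m}^{s+1}$ supplied, after inverting a suitable $a\notin\mathfrak{p}$, by Corollary \ref{int} then yields $\sigma(b)-b\in\mathfrak{p}^{s+1}$, which is $(1)$. The subtlest step is precisely this passage from the localized equality of Corollary \ref{int} back to the global membership demanded by $(1)$, and it is here that the separable-generation and characteristic hypotheses are genuinely used.
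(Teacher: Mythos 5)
Most of your proposal is sound, and in two places it takes a genuinely different (and valid) route from the paper. Your $(1)\Rightarrow(2)$ computation is exactly the paper's. For $(2)\Leftrightarrow(3)$ the paper simply quotes Lemma \ref{gjr} for $(2)\Rightarrow(3)$ (which needs no separability at all), whereas you run both directions through Theorem \ref{mt} and the identity $\tau_{\mathfrak{p},s}(\sigma)\circ T_{\mathfrak{p},s}=T_{\mathfrak{p},s}\circ\sigma_{\mathfrak{p}}$, then pass to the corepresenting endomorphism by Yoneda; this works, since $T_{\mathfrak{p},s}(x_i/1)=x_{i,0}\otimes 1+[t]\,x_{i,1}\otimes 1$ shows that fixing $\operatorname{Im}(T_{\mathfrak{p},s})$ pointwise forces the endomorphism to fix the algebra generators $x_{i,0}\otimes 1$, $x_{i,1}\otimes 1$ and hence everything. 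Likewise your graded argument for $(4)\Rightarrow(3)$, via $\chi_B$ and the fact that $\mathcal{L}_1(\sigma)$ preserves the $\operatorname{Sym}^\ast$-grading while the ideal $\langle x_{i,1}\rangle^2_{i\in\nn{n}}$ is exactly the part of degree $\geq 2$, is correct; it is the structural form of what the paper compresses into the one-line inclusion $\operatorname{Im}(T_{\mathfrak{p},s})\subseteq\bigl(\bigoplus_{i\in\nn{n}}B_\mathfrak{p}x_{i,1}\bigr)\otimes_B N_s(\mathfrak{p})$.

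The genuine gap is your closing step $(2)\Rightarrow(1)$, which you flag as "the main obstacle" but never actually carry out. Corollary \ref{int} is a statement in $B_a$ for one specific $a=\det\bigl((d_i(g_j))\bigr)\delta\notin\mathfrak{p}$, so combining it with Corollary \ref{specmsep} gives only $(\sigma(b)-b)/1\in\mathfrak{p}_a^{s+1}$, i.e.\ $a^N(\sigma(b)-b)\in\mathfrak{p}^{s+1}$ for some $N$; that is membership in the saturation $(\mathfrak{p}^{s+1}:a^{\infty})$, not in $\mathfrak{p}^{s+1}$. To descend you would need $a$ to avoid every associated prime of $\mathfrak{p}^{s+1}$, including embedded ones, and neither Corollary \ref{int} nor the standing hypotheses supply this: in characteristic $0$ the separable-generation hypothesis holds for every prime, yet ordinary powers can have embedded primes and differ from symbolic powers (e.g.\ the ideal of the monomial curve $(t^3,t^4,t^5)$ in $k[x,y,z]$), so asserting that "this is where the hypotheses are genuinely used" is not an argument. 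As written, your proposal therefore establishes only $(1)\Rightarrow(2)\Leftrightarrow(3)\Leftrightarrow(4)$. The paper instead closes the cycle by proving $(4)\Rightarrow(1)$ by contrapositive: if $\sigma(b)-b\notin\mathfrak{p}^{s+1}$ for some $b\in B$, then by Theorem \ref{mt} one has $T_{\mathfrak{p},s}(\sigma(b)/1)\neq T_{\mathfrak{p},s}(b/1)$, and since $\operatorname{Im}(T_{\mathfrak{p},s})$ maps injectively to the quotient by $\langle x_{i,1}\rangle^2_{i\in\nn{n}}$, statement $(4)$ fails. Note that this step tacitly identifies $\mathfrak{p}^{s+1}$ with its contraction $\mathfrak{p}_\mathfrak{p}^{s+1}\cap B$, so the ordinary-versus-symbolic-power issue you sensed is indeed the crux of the whole corollary; but identifying the crux is not the same as resolving it, and your detour through maximal ideals does not avoid it either.
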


\begin{proof}
$(1) \Rightarrow (2)$ Suppose $(1)$ is true. Then, we have, for any $b \in B$ and $a \in B \setminus \mathfrak{p}$, we have 
$\sigma (b) -b \in \mathfrak{p}^{s+1}$ and $\sigma \left( \frac{1}{a}\right) -\frac{1}{a} \in \mathfrak{p}^{s+1}$. From this, we can easily deduce 
$$\sigma \left( \frac{b}{a}\right) -\frac{b}{a} = \frac{\sigma (b) a - b\sigma(a)}{a \sigma (a)} = \frac{b( a -\sigma(a))}{a \sigma (a)} \in \mathfrak{p}^{s+1}.$$

$(2) \Rightarrow (3)$ follows from Lemma \ref{gjr}.

$(3) \Rightarrow (4)$ is clear.

$(4) \Rightarrow (1)$:We prove this implication by contrapositive. 
Assume that there exists $b \in B$ such that $\sigma(b) - b \notin \mathfrak{p}^{s+1}$. For such a $b$, we have, by Definition \ref{acttaylor} and Theorem \ref{mt}, that $\tau_{\mathfrak{p},s}(\sigma)\left(T_{\mathfrak{p}, s}\left(\frac{b}{1}\right)\right) = T_{\mathfrak{p}, s}\left(\frac{\sigma(b)}{1}\right) \neq T_{\mathfrak{p}, s}\left(\frac{b}{1}\right)$. This proves that $\mathcal{L}_1(\sigma) \otimes \operatorname{id}_{ N_s(\mathfrak{p}) } \neq \operatorname{id}_{\mathcal{L}_1(B)\otimes_B N_s(\mathfrak{p})}$, since $\tau_{\mathfrak{p},s}(\sigma) \circ T_{\mathfrak{p},s} = \eta_{m,\mathcal{L}_m(B)}^B(\iota_{\mathfrak{p},s} \circ \mathcal{L}_1(\sigma))$.
Finally, 
$$\overline{\mathcal{L}_1(\sigma) \otimes \operatorname{id}_{N_s(\mathfrak{p})}} \neq  \operatorname{id}_{(\mathcal{L}_1(B)\otimes_B N_s(\mathfrak{p}) )/ \langle x_{i,1}\rangle^2_{i\in \nn{n}}},$$ since 
$$\operatorname{Im}(T_{\mathfrak{p},s}) \subseteq (B_{\mathfrak{p}} x_{1,1}\oplus B_{\mathfrak{p}} x_{2,1}\oplus \dots  \oplus B_{\mathfrak{p}} x_{n,1})\otimes_B N_s (\mathfrak{p}).$$ 
\end{proof}

Given an action of a group on an $R$-algebra, the jet algebra functors induce actions of the same group on the corresponding jet algebras:
\begin{definition} Let $m\in \mathbb{N}$, $G$ be an abstract group, and let $B$ be a $R$-algebra.
Consider an action $\alpha  \colon G \times B \rightarrow B$, so that for $g\in G$, we have an automorphism $\alpha_g\colon B \rightarrow B$ given by $\alpha_g(b)= \alpha(g, b)$, for any $b\in B$.
The group action $\mathcal{L}_m(\alpha) \colon G \times \mathcal{L}_m( B) \rightarrow \mathcal{L}_m (B)$ sending $(g, a)$ to $\mathcal{L}_m ( \alpha_g)(a)$ is called the {\sf  action induced by $\alpha$ on the $m$-th jet algebra of $B$}.
\end{definition} 

\begin{definition}\label{ramification}
We consider an action $\alpha  \colon G \times B \rightarrow B$ and an integer $s\geq -1$, we denote by $G_s(\mathfrak{q})$ the  ramification group of order $s$ from the action $\mathcal{L}_m(\alpha)$ at the prime ideal $\mathfrak{q}$ of $\mathcal{L}_m(B)$. That is the set of elements of $G$ that induce a trivial action on $N_s (\mathfrak{q})$.
\end{definition}


We conclude the paper with the following theorem. This compares the ramification groups of a certain action on an $R$-algebra with the ramification groups of lower order for the induced action on the first jet algebra. In the end, by iterating this construction, it will be possible to express all the ramifications groups of said action in terms of the inertia groups of the actions induced on a suitable algebra. This construction is functorial, meaning that it can be extended to actions of algebraic groups on group schemes, allowing to define an analog of the ramification groups also in that category.
\begin{introthm1}{Theorem}\label{hiram}
Let $B$ be a finitely presented algebra over $k$, $\mathfrak{p}$ be a prime ideal of $B$, $s\in \mathbb{N}$. We assume that $k(\mathfrak{p})$ be separably generated over $k$, and $\operatorname{char}(k)$ is either $0$ or greater than $s$. Then, $\mathfrak{p}\mathcal{L}_1(B)$ is a prime ideal of $\mathcal{L}_1(B)$, and
$$G_{s}(\mathfrak{p}) = G_{s-1} (\mathfrak{p}\mathcal{L}_1(B)) = G_{0} (\mathfrak{p}\mathcal{L}_1^{s}(B))$$
where $\mathcal{L}_1^{t}$ being $\mathcal{L}_1$ composed with itself $t$ times, where $t\in \mathbb{N} \setminus \{ 0 \}$.
\end{introthm1}

\begin{proof} 
By Lemma \ref{irreduc}, we can establish that $\mathfrak{p}\mathcal{L}_1(B)$ is indeed a prime ideal. Moreover, Corollary \ref{Lram} implies that $G_{s}(\mathfrak{p}) = G_{s-1} ( \mathfrak{p}\mathcal{L}_1(B)) .$
\end{proof}

\begin{remark} From Corollary \ref{specmsep}, we obtain 
$$G_s (\mathfrak{p}\mathcal{L}_1(B) ) =\cap_{\mathfrak{n} \in  \operatorname{Specm}_{\operatorname{sep}}(\mathcal{L}_1(B)) \cap V  (\mathfrak{p} \mathcal{L}_1(B))} G_s (\mathfrak{n})=\cap_{\mathfrak{m} \in  \operatorname{Specm}_{\operatorname{sep}}(B) \cap V ( \mathfrak{p})} G_s (\mathfrak{m} \mathcal{L}_1(B)) .$$
\end{remark}
We conclude the paper with several examples to illustrate our theorem.
\begin{exmps} \label{ex1}
\begin{enumerate} 
\item We consider the polynomial ring $k[x,y]$ with an action $\sigma$ of $S_2$ on the indeterminate whose non-trivial element exchanges the variables. We have $G_0(x,y) = S_2$ and $G_1(x,y)= \{ \operatorname{id}\}$. The first jet algebra of $k[x,y]$ is $k[x_0, y_0, x_1, y_1]$ and $\mathcal{L}_1( \sigma )(x_0) = y_0$ and $\mathcal{L}_1( \sigma )(x_1)=y_1$, so that $G_0(x_0,y_0) =G_1(x,y)= \{ \operatorname{id}\}.$
\item We consider a field of characteristic $3$, $G= \langle \sigma \rangle$ which is a group of order $3$, $B=k[x,y, z]/ ( x^3 +y^3 + z^2),$ with  
 $ \sigma ( x) = x+ z^2 , \ \sigma (y) = y- z^2, \ \sigma ( z) =z.$
We have  $$G = G_0(x, y,z)= G_1 (x,y,z)\text{ and } G_{2}(x,y, z)= \{ \operatorname{id}\}.$$
 The first jet algebra is:  
$$\mathcal{L}_1( B) = \frac{k[x_0, y_0,z_0, x_1, y_1, z_1]}{( x_0^3 + y_0^3 + z_0^2, 2 z_0 z_1)}.$$  
The action becomes
 $$\mathcal{L}_1( \sigma )( x_0) = x_0+ z_0^2 , \ \mathcal{L}_1( \sigma ) (y_0) = y_0- z_0^2,$$
  $$\mathcal{L}_1( \sigma ) ( x_1) = x_1+ 2z_0z_1 , \ \mathcal{L}_1( \sigma ) (y_1) = y_1- 2z_0z_1, \ \mathcal{L}_1( \sigma ) ( z_i) =z_i, \ i \in \{ 1, 2\}.$$ 
So that, 
  $$G=G_0( x_0,y_0,z_0) = G_1 (x,y,z) \text{ and } G_1 (x_0,y_0,z_0)= G_2 ( x, y,z)= \{ \operatorname{id}\}. $$
The first jet algebra of the first jet algebra is:
$$\scalebox{1}{$\mathcal{L}_1^2
( B) = \frac{k[(x_{i,j}, y_{i,j} , z_{i,j})_{(i,j)\in \on{1}\times \on{1}}]}{( x_{0,0}^3 + y_{0,0}^3 + z_{0,0}^2, 2 z_{0,0} z_{1,0}, 2 z_{0,0} z_{0,1} , 2 ( z_{0,1}z_{1,0} +z_{0,0} z_{1,1}))} .$}$$  
So that,
 $$ \mathcal{L}_1^2( \sigma ) ( x_{0,0}) = x_{0,0}+ z_{0,0}^2 , \ \mathcal{L}_1^2( \sigma )  (y_{0,0}) = y_{0,0}- z_{0,0}^2,$$ 
  $$ \mathcal{L}_1^2( \sigma )  ( x_{i,j}) = x_{i,j}+ 2z_{0,0}z_{i,j} , \ \mathcal{L}_1^2( \sigma )  (y_{i,j}) = y_{i,j}- 2z_{0,0}z_{i,j}, \text{ for any } (i,j) \in \{ (1,0), (0,1)\},$$ 
      $$ \mathcal{L}_1^2( \sigma ) ( x_{1,1}) = x_{1,1}+ 2(z_{1,0}z_{0,1}+ z_{0,0} z_{1,1}) , \ \mathcal{L}_1^2( \sigma )  (y_{1,1}) = y_{1,1}- 2(z_{1,0}z_{0,1}+ z_{0,0} z_{1,1}),$$ 
      $$ \mathcal{L}_1^2( \sigma )  ( z_{i,j}) =z_{i,j}, \text{ for any } (i,j) \in \{ (0,0), (1,0), (0,1), (1,1)\} .$$ 
and $G_0( x_{0,0},y_{0,0},z_{0,0}) = G_2 (x,y,z)= \{ \operatorname{id}\}.$

        \end{enumerate}
\end{exmps}

\end{document}